\tikzset{
    triangle/.style={
        draw,
        shape border rotate=0,
        regular polygon,
        regular polygon sides=3,
        fill=white,
        node distance=2cm,
        minimum height=4em
    }
}
\tikzset{
  bigblue/.style={circle, draw=blue!80,fill=blue!40,thick, inner sep=1.5pt, minimum size=5mm},
  bigred/.style={circle, draw=red!80,fill=red!40,thick, inner sep=1.5pt, minimum size=5mm},
  bigblack/.style={circle, draw=black!100,fill=black!40,thick, inner sep=1.5pt, minimum size=5mm},
  bluevertex/.style={circle, draw=blue!100,fill=blue!100,thick, inner sep=0pt, minimum size=2mm},
  redvertex/.style={circle, draw=red!100,fill=red!100,thick, inner sep=0pt, minimum size=2mm},
  blackvertex/.style={circle, draw=black!100,fill=black!100,thick, inner sep=0pt, minimum size=1.5mm},  
  whitevertex/.style={circle, draw=black!100,fill=white!100,thick, inner sep=0pt, minimum size=2mm},  
  smallblack/.style={circle, draw=black!100,fill=black!100,thick, inner sep=0pt, minimum size=1mm},
  smallwhite/.style={circle, draw=white!100,fill=white!100,thick, inner sep=0pt, minimum size=1mm},
  redvertexv2/.style={circle, draw=black!100,fill=red!100,thick, inner sep=0pt, minimum size=3.35mm},
  bluevertexv2/.style={circle, draw=black!100,fill=blue!100,thick, inner sep=0pt, minimum size=3.35mm},
  greenvertexv2/.style={circle, draw=black!100,fill=green!100,thick, inner sep=0pt, minimum size=3.35mm},
  blackvertexv2/.style={circle, draw=black!100,fill=black!100,thick, inner sep=0pt, minimum size= 3.35mm}
}
\newcommand\ben[1]{\textcolor{magenta}{#1}}
\newcommand\dan[1]{\textcolor{green}{#1}}
\newcommand\pom[1]{\textcolor{teal}{#1}}
\newenvironment{claimproof}{ \trivlist
	\item[\hskip\labelsep
	\textit{Proof of the claim}.]\ignorespaces
}{\hfill$\vartriangleleft$\medskip
	
}
\title[Subchromatic numbers of powers of graphs with excluded minors]{Subchromatic numbers of powers of graphs with excluded minors}
\author[Cortés]{Pedro P. Cort\'es}
\address[Pedro P. Cort\'es]{Departamento de Ingenier\'ia Matem\'atica, Universidad de Chile, Santiago, Chile}
\email{pedrocortes1997@gmail.com}
\author[Kumar]{Pankaj Kumar}
\address[Pankaj Kumar]{Department of Computer Science, Heinrich Heine Universit\"{a}t, D\"{u}sseldorf , Germany}
\email{pundir.pankaj25@gmail.com}
\author[Moore]{Benjamin Moore}
\address[Benjamin Moore]{Institute of Computer Science, Klosterneuburg Austria}  
\email{Benjamin.Moore@ist.ac.at}
\author[Ossona de Mendez]{Patrice Ossona de Mendez}
\address[Patrice Ossona de Mendez]{Centre d’Analyse et de Math\'ematiques Sociales (UMR 8557) and CNRS, Paris, France}
\email{pom@ehess.fr}
\author[Quiroz]{Daniel A. Quiroz}
\address[Daniel A. Quiroz]{Instituto de Ingenier\'ia Matem\'atica -- CIMFAV, Universidad de Valpara\'iso, Valpara\'iso, Chile}
\email{daniel.quiroz@uv.cl}
\thanks{P.P. Cortés and D.A. Quiroz are partially supported by ANID/Fondecyt Iniciación en Investigación 11201251, and by MATHAMSUD MATH210008.}
\thanks{B. Moore completed this project while he was a postdoctoral research at Charles University. He was supported by project 22-17398S (Flows and cycles in graphs on surfaces) of Czech Science Foundation when this project was completed.}
\thanks{Pankaj Kumar partially completed this work while a Ph.D student at Charles university.}
\date{}
\newtheorem{thm}[equation]{Theorem}
\newtheorem{lemma}[equation]{Lemma}
\newtheorem{claim}{Claim}
\newtheorem{question}[equation]{Question}
\theoremstyle{definition}
\newtheorem{definition}[equation]{Definition}
\newtheorem*{ack}{Acknowledgements}
\newtheoremstyle{case}{}{}{\normalfont}{}{\itshape}{\normalfont:}{ }{}
\theoremstyle{case}
\numberwithin{equation}{section}
\newcommand{\N}{\mathbb{N}}
\DeclareMathOperator{\subcol}{swcol}
\DeclareMathOperator{\col}{col}
\DeclareMathOperator{\gcol}{gcol}
\DeclareMathOperator{\wcol}{wcol}
\DeclareMathOperator{\WReach}{WReach}
\DeclareMathOperator{\GReach}{GReach}
\DeclareMathOperator{\SubReach}{SemiReach}
\DeclareMathOperator{\Reach}{Reach}
\DeclareMathOperator{\semiweak}{swcol}
\newcommand{\chisub}{\chi_{\text{\rm sub}}}
\newcommand{\floor}[1]{\left\lfloor#1\right\rfloor }
\newcommand{\ceil}[1]{\left\lceil#1\right\rceil }
\date{}
\begin{document}
\maketitle

\begin{abstract}
    A \textit{$k$-subcolouring} of a graph $G$ is a function $f:V(G) \to \{0,\ldots,k-1\}$ such that the set of vertices coloured $i$ induce a disjoint union of cliques. The \textit{subchromatic number}, $\chisub(G)$, is the minimum $k$ such that $G$ admits a $k$-subcolouring. Ne\v{s}et\v{r}il, Ossona de Mendez, Pilipczuk, and Zhu~(2020), recently  raised the problem of finding tight upper bounds for $\chisub(G^2)$ when $G$ is planar. We show that $\chisub(G^2)\le 43$ when $G$ is planar, improving their bound of 135. We  give even better bounds when the planar graph $G$ has larger girth. Moreover, we show that $\chisub(G^{3})\le 95$, improving the previous bound of 364. For these we adapt some recent techniques of Almulhim and Kierstead (2022), while also extending the decompositions of triangulated planar graphs of Van den Heuvel, Ossona de Mendez, Quiroz, Rabinovich and Siebertz (2017), to planar graphs of arbitrary girth. Note that these decompositions are the precursors of the graph product structure theorem of planar graphs.
    
    We give  improved bounds for $\chisub(G^p)$ for all $p$, whenever $G$ has bounded treewidth, bounded simple treewidth, bounded genus, or excludes a clique or biclique as a minor. For this we introduce a family of parameters which form a gradation between the strong and the weak colouring numbers. We give upper bounds for these parameters for graphs coming from such  classes. 

    Finally, we give a 2-approximation algorithm for the subchromatic number of graphs 
    having a layering in which each layer has bounded cliquewidth and this layering is computable in polynomial time (like the class of all $d^{th}$ powers of planar graphs, for fixed $d$).
    This algorithm works even if the power $p$ and the graph $G$ is unknown.
\end{abstract}

\section{Introduction}

 In this paper, we study a notion which allows us to ``colour" dense graphs with few colours. Recall that a \emph{$k$-colouring} is a function $f:V(G) \to \{0,\ldots,k-1\}$ such that for all $e =xy \in E(G)$, we have $f(x) \neq f(y)$.  A \emph{$k$-subcolouring} of a graph $G$ is a function $f:V(G) \to \{0,\ldots,k-1\}$ such that the set of vertices coloured $i$ induce a disjoint union of cliques  (first defined in \cite{ALBERTSON198933}). We let $\chisub(G)$ be the \emph{subchromatic number} of $G$, that is, the minimum $k$ such that $G$ admits a $k$-subcolouring. Of course, if $G$ is $k$-colourable, then $G$ is $k$-subcolourable. However the converse is far from the truth, as cliques are $1$-subcolourable, and hence subcolouring gives us a notion for colouring dense graphs with few colours. 

 In general, deciding if a graph is $k$-subcolourable is NP-complete \cite{ACHLIOPTAS199721}. In particular, it is NP-complete to determine if a triangle-free planar graph has subcolouring number at most~$2$~\cite{GIMBEL2003139}. 
 Also, for any $\varepsilon>0$, the subchromatic number of $n$-vertex graphs cannot be approximated within a factor of $n^{1/2-\varepsilon}$ in polynomial time, unless $\NP\subseteq\ZPP$\footnote{{\ZPP}  (zero-error probabilistic polynomial time) is the complexity class of problems for which a probabilistic Turing machine exists, which
 always returns the correct answer within a running time polynomial in expectation for every input.}, see \cite{noapprox}.
 On the other hand a recent result of Ne\v{s}et\v{r}il, Ossona de Mendez, Pilipczuk and Zhu~\cite{clusteringpowers}, gives constant upper bounds for the subchromatic number of powers of graphs coming from sparse classes. To formalize this statement we need some definitions.

First recall that for a graph $G$, the \emph{$d^{\rm\,th}$ power} of $G$ is the graph $G^{d}$ with vertex set $V(G)$ and $uv \in E(G^{d})$ if there is $u,v$-path of length at most $d$ in $G$. Note that the $d^{\rm\,th}$-power of a graph is generally very dense, even if the original graph is sparse. For example, the square of a star is a clique. 

Now we introduce a gradation between the weak and strong colouring numbers of  Kierstead and Yang \cite{Weakcolouring}, which is useful for this paper. We use the term \emph{generalised colouring numbers} to refer to this family of parameters. 

For $k,\ell \in \mathbb{N}\cup\{\infty\}$, a graph $G$, a linear ordering $\sigma$ of $V(G)$, and two vertices $u,v$ satisfying $u \leq_\sigma v$, we say that $u$ is \emph{$k$-hop $\ell$-reachable} from $v$ if there exists a  path $P=x_0x_1...x_s$ with $x_0=v$, $x_s=u$ and $s\leq\ell$, such that  $u<_{\sigma}x_{i-1}$ for every $i\in[s]$, and such that $|\{j\in[s] \mid x_{j}<_{\sigma}x_{i-1}\text{  for every } i\in[j]\}|\leq k$. The set of vertices that are $k$-hop $\ell$-reachable from $v$ with respect to $\sigma$ is denoted by $\GReach_{k,\ell}[G,\sigma,v]$. Note that in particular $v \in \GReach_{k,\ell}[G,\sigma,v]$. The \emph{$k$-hop $\ell$-colouring number of $\sigma$} is  $\gcol_{k,\ell}(G,\sigma) =  \max \{|\GReach_{k,\ell}[G,\sigma,v]| \mid v \in V(G) \}$ and the \emph{$k$-hop $\ell$-colouring number of $G$}, denoted $\gcol_{k,\ell}(G)$, is the smallest $\gcol_{k,\ell}(G,\sigma)$ for $\sigma$ ranging over all possible linear orderings of $V(G)$. The parameters $\col_\ell(G): =\gcol_{1,\ell}(G)$ and $\wcol_\ell(G):=\gcol_{\ell,\ell}(G)$ are  \emph{the strong and the weak colouring numbers}, respectively. Note for instance that we have $$\gcol_{k+k',\ell+\ell'}(G)\le \gcol_{k,\ell}(G)\cdot \gcol_{k',\ell'}(G),$$ 
as paths coming from $\gcol_{k+k',\ell + \ell'}(G)$ can be split into two smaller paths.  In particular $$\gcol_{k,\ell}(G)\le \col_{\ell-k+1}(G)\cdot \col_1(G)^{k-1} \le \wcol_{\ell-k+1}(G)\cdot \wcol_1(G)^{k-1}.$$

A class $\mathscr C$ of graphs  has \emph{bounded expansion} if, for every $\ell \in \N$, $\sup_{G\in\mathscr C}\text{wcol}_{\ell}(G) < \infty$  \cite{weakcolouringzhu}. Many natural graph classes have bounded expansion; for example, planar graphs, any graph class omitting a graph $H$ as a minor, or even more generally, any graph class omitting $H$ as a topological minor. We refer the reader to \cite{sparsity} for a more detailed treatment of bounded expansion classes.

With this in hand, we can state the bound obtained in
 \cite{clusteringpowers} for the subchromatic number of powers of graphs.

\begin{thm}[Ne\v{s}et\v{r}il et al. \cite{clusteringpowers}]
\label{NOdMPZ}
For any graph $G$, and any fixed integer $d \in \mathbb{N}$, we have $\chisub(G^d) \leq \wcol_{2d}(G)$. 
\end{thm}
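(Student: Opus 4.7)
The plan is to prove $\chisub(G^d)\le \wcol_{2d}(G)$ by building an explicit $k$-subcolouring of $G^d$, where $k:=\wcol_{2d}(G)$. Fix a linear order $\sigma$ of $V(G)$ achieving $\wcol_{2d}(G,\sigma)=k$, and for every $v\in V(G)$ set $L(v):=\WReach_{2d}[G,\sigma,v]$, so $|L(v)|\le k$. Process the vertices in $\sigma$-order and assign $c(v)\in\{1,\ldots,k\}$ to be the smallest colour not appearing on $L(v)\setminus\{v\}$; this greedy rule is feasible since at most $k-1$ colours are forbidden at each step, so at most $k$ colours are used in total.

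To prove $c$ is a subcolouring of $G^d$, assume towards a contradiction that three vertices $u,v,w$ share a colour and form an induced $P_3$ of $G^d$ centred at $v$: $\text{dist}_G(u,v),\text{dist}_G(v,w)\le d$ but $\text{dist}_G(u,w)>d$. Concatenate shortest $u$-$v$ and $v$-$w$ paths $P_1,P_2$ into a $u$-$w$ walk $W$ of length at most $2d$, and let $m$ be the $\sigma$-minimum on $W$. When $m\in\{u,v,w\}$ the contradiction is immediate: for instance, if $m=u$ then $u$ is $\sigma$-minimum on $P_1\subseteq W$, so the reverse of $P_1$ certifies $u\in\WReach_d[G,\sigma,v]\subseteq L(v)$ with $u<_\sigma v$, contradicting $c(u)=c(v)$; the cases $m=w$ and $m=v$ are handled analogously via $P_2$ or $P_1$.

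The hard case is $m\notin\{u,v,w\}$: the greedy rule then only excludes $c(m)$ from $c(u),c(v),c(w)$, and does not by itself force these three colours to be pairwise distinct. In fact this case can really arise for badly chosen $\sigma$ (even among $\wcol_{2d}$-realising orders), as one can verify on a graph consisting of three length-$d$ paths glued at a common vertex. The crux of the proof must therefore be either a careful choice of $\sigma$ with extra structural properties---for instance, one obtained by BFS from a well-chosen root or by a centroid-peeling procedure, so that the $\sigma$-minimum on some simple $u$-$w$ path extracted from $W$ always lands in $\{u,v,w\}$---or an augmentation of the greedy rule that still uses only $k$ colours but enforces one of the required weak reachabilities among $\{u,v,w\}$ directly. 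The slack between $\WReach_d$ and $\WReach_{2d}$ is exactly what powers this step: it gives enough room to combine sub-walks of $P_1$ and $P_2$ into a single $\WReach_{2d}$-witness between two of $\{u,v,w\}$. Executing this last step, while keeping the total palette size within $\wcol_{2d}(G)$, is where I expect most of the technical work of the proof to lie.
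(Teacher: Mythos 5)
Your diagnosis of the obstruction is exactly right, but your proposed remedies point in a different direction from the actual argument, and as written the proof is incomplete. A straight greedy in $\sigma$-order with forbidden set $\WReach_{2d}[G,\sigma,v]\setminus\{v\}$ does not by itself rule out a monochromatic induced $P_3$ in $G^d$: as you observe, the $\sigma$-minimum $m$ of the concatenated walk can be an interior vertex that is none of $u,v,w$, in which case the greedy rule imposes no constraint pairing any two of the three. Engineering a special ordering $\sigma$, or locally augmenting the greedy rule, is not what the proof does, and it is unclear that either repair can be made to work while keeping the palette within $\wcol_{2d}(G)$.

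The route the paper takes (Lemma~\ref{cluster}, yielding Theorem~\ref{semiweak}, whence the present statement follows since $\subcol_{2d}(G)\le\wcol_{2d}(G)$) sidesteps the $P_3$ analysis entirely by first building a clustering of $G^d$ and then \emph{properly} colouring the quotient. For each vertex $u$, set $m(u)$ to be the $\sigma$-minimum of the closed ball $N^{\lfloor d/2\rfloor}[u]$, and declare $u\sim w$ iff $m(u)=m(w)$. Each class is a clique in $G^d$, since two vertices of the same class are each within $\lfloor d/2\rfloor$ of a common vertex and hence within $d$ of each other. Now properly colour the quotient $G^d/\mathcal{X}$ and give every vertex the colour of its block. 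This is automatically a subcolouring: two same-coloured vertices are either in one block (and so $G^d$-adjacent) or in two distinct blocks that received the same colour (hence non-adjacent in the quotient, hence $G^d$-nonadjacent). No induced-$P_3$ case analysis arises at all. The remaining work is to order the blocks by $m(\cdot)$ and bound $\col(G^d/\mathcal{X})$ by $\wcol_{2d}(G)$ (the paper in fact proves the sharper bound $\subcol_{2d}(G)$), and this uses precisely the walk-concatenation idea you sketched --- concatenating radius-$\lfloor d/2\rfloor$ paths to the representatives $m(\cdot)$ with a length-$\le d$ path between blocks --- but applied at the level of block representatives rather than to $u,v,w$ directly. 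That is where the slack between $d$ and $2d$ is spent, not on producing a direct weak-reachability relation between two of the three vertices in a hypothetical monochromatic $P_3$.
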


In particular, if $\mathcal{C}$ is a class of bounded expansion, then for any fixed integer $d$, there exists an integer $c=c(\mathcal{C},d)$ such that $\chisub(G^{d}) \leq c$, for any $G \in \mathcal{C}$. 

Our first result is a refinement of this upper bound as follows.

\begin{thm}
\label{thm:gcol}
For any graph $G$, and any fixed integer $d \in \N$, we have $\chisub(G^d) \leq \gcol_{d,2d}(G)$. Moreover if $d$ is odd, then we have $\chisub(G^d) \leq \gcol_{d,2d-1}(G)$.
\end{thm}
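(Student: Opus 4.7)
The plan is to apply a greedy colouring strategy analogous to that of Ne\v{s}et\v{r}il et al.~\cite{clusteringpowers}, but using the finer parameters $\gcol_{d,2d}$ (respectively $\gcol_{d,2d-1}$) in place of the weak colouring numbers. Let $\sigma$ be an ordering of $V(G)$ achieving $c := \gcol_{d,2d}(G)$. Processing vertices in $\sigma$-order, assign $f(v) \in [c]$ greedily so as to avoid every colour appearing on $\GReach_{d,2d}[G,\sigma,v] \setminus \{v\}$; such a colour always exists since this set has cardinality at most $c-1$.

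The crux is to show $f$ is a subcolouring of $G^d$. Suppose three vertices $u_1, u_2, u_3$ share a colour and form an induced $P_3$ in $G^d$, with $u_1 u_2, u_2 u_3 \in E(G^d)$ and $u_1 u_3 \notin E(G^d)$. Fix paths $P_{12}, P_{23}$ in $G$ of length at most $d$ realising these two edges, with $\sigma$-minima $w_{12}$ and $w_{23}$, respectively. The key preliminary observation is: for any same-coloured pair $u,v$ adjacent in $G^d$, the $\sigma$-minimum on every witnessing path of length at most $d$ lies strictly in the interior. Indeed, if say $u <_\sigma v$ were the minimum, then $u$ is $d$-hop $d$-reachable from $v$, placing $u \in \GReach_{d,d}[G,\sigma,v] \subseteq \GReach_{d,2d}[G,\sigma,v]$, and contradicting $f(u)=f(v)$. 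Consequently $w_{12}$ is strictly interior to $P_{12}$ and $w_{23}$ is strictly interior to $P_{23}$.

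I would then consider the concatenated walk $P_{12}*P_{23}$ from $u_1$ to $u_3$, of length at most $2d$, and perform a case analysis on its $\sigma$-minimum $\mu$. The cases $\mu \in \{u_1,u_2,u_3\}$ each produce a direct contradiction via the preliminary observation applied to the relevant half of the walk. When $\mu$ is strictly interior, say lying on $P_{12}$, one shows that $\mu$ coincides with $w_{12}$, and then argues, via a careful count of strict running minima along the concatenated walk from $u_3$ to $\mu$, that $\mu$ (or one of the $u_i$'s) becomes $d$-hop $2d$-reachable from another triple vertex along a specifically chosen witness, finally contradicting the equality $f(u_1)=f(u_2)=f(u_3)$.

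For the improved bound $\chisub(G^d) \leq \gcol_{d,2d-1}(G)$ when $d$ is odd, a parity refinement applies: in the extremal case where both $P_{12}$ and $P_{23}$ attain length exactly $d$, the odd parity of $d$ allows one to shorten the concatenation by one edge while preserving the key reachability property, replacing $2d$ by $2d-1$.

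\textbf{Expected obstacle.} The principal difficulty lies in the interior-minimum case, where the naive count of strict running minima on the length-$2d$ concatenated walk is up to $2d$, whereas $\GReach_{d,2d}$-membership requires at most $d$. The argument likely proceeds by replacing the arbitrary witness paths $P_{12},P_{23}$ by geodesics in $G$, and by splitting the concatenated walk at $w_{12}$ and $w_{23}$, showing that strict running minima can only accumulate on descents toward these two minima, and that the combined contribution respects the required bound of $d$. The odd-$d$ refinement will likely need an extra parity bookkeeping layered on top of this counting.
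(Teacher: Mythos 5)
Your preliminary observation is correct, and the greedy set-up is fine as far as it goes, but the interior-minimum case is not merely the hard case — it is a genuine gap, and in the form you propose it cannot be closed. The claim you would need is that every monochromatic induced $P_3$ on $u_1u_2u_3$ in $G^d$ forces a reach relation among $u_1,u_2,u_3$ themselves, and this is false. Take $d=2$, let $G$ be the five-vertex path $u_1\,x\,u_2\,y\,u_3$, and let $\sigma$ be $x<y<u_1<u_2<u_3$ (this ordering attains $\gcol_{2,4}(G)=3$; in any case your correctness argument uses no property of $\sigma$ beyond the greedy rule). Then $u_1u_2u_3$ is an induced $P_3$ in $G^2$, the minimum of the concatenated walk $u_1xu_2yu_3$ is the interior vertex $x$, and no $u_i$ lies in $\GReach_{2,4}[G,\sigma,u_j]$ for $i\neq j$: every path joining two of the $u_i$'s passes through $x$ or $y$, which sit below all three, so none of the $u_i$ is ever the minimum of a witnessing path. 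Hence no bookkeeping of running minima along the concatenated walk can produce the contradiction you are after; your greedy colouring simply has no mechanism that prevents such a triple from becoming monochromatic (in this instance it happens not to, but only because of incidental constraints imposed by $x$ and $y$).

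The paper resolves exactly this difficulty by not colouring vertices directly. It first clusters $V(G)$ via $m(u)=\min_\sigma N^{\floor{d/2}}[u]$: each class $\{u: m(u)=w\}$ is a clique of $G^d$, so same-coloured adjacency inside a class is harmless, and it then greedily \emph{properly} colours the quotient $G^d/\mathcal{X}$ in the order induced by the representatives $m(A)$. The reachability argument is applied to these representatives: an edge $AB$ of the quotient with $m(B)<_\sigma m(A)$ yields a walk $m(B)\to b\to a\to m(A)$ of length at most $2d$ (at most $2d-1$ for odd $d$) whose final $\floor{d/2}$-segment stays in $N^{\floor{d/2}}[a]$ and hence above $m(A)$; this is precisely what caps the number of hops at $d$ and shows $m(B)\in\SubReach_{2d}[G,\sigma,m(A)]\subseteq\GReach_{d,2d}[G,\sigma,m(A)]$. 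In the example above, $u_1$ and $u_2$ land in the same cluster (both have $m=x$), so the offending $P_3$ never survives into the quotient. If you want to retain a greedy flavour, you must greedily colour the clusters, not the vertices.
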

For the purpose of our main result, we prove a stronger version of this theorem (see Theorem~\ref{semiweak}). Toward this end, we follow a similar template to that used to prove Theorem \ref{NOdMPZ} in \cite{clusteringpowers}, while using some ideas from the study of the chromatic numbers of exact distance graphs of Van~den~Heuvel, Kierstead, and Quiroz~\cite{vandenHeuveletal2019}.

Theorem \ref{thm:gcol} motivates us to give bounds for the generalised colouring numbers in various minor closed classes. These results are interesting in their own right and likely have many applications. In particular, they imply tighter bounds on the subchromatic number of arbitrary powers of graphs in these classes. Our results on the generalised colouring numbers are summed up in Table~\ref{tab:gcol}. There, $K^{*}_{s,t}$ refers to the complete join $K_s+\overline{K_t}$, 
that is the graph that can be partitioned into a clique of size $s$ and an independent set of size $t$ in such a a way that every vertex in the independent set is adjacent to every vertex in the clique\footnote{Such graphs are sometimes called complete split graphs.}. It is useful to compare these to the  known bounds on the weak colouring numbers given in Table~\ref{tab:weak}. 


\begin{table}[h]
\begin{center}
\begin{tabular}{|c|c|}
\hlx{hvv}
\textbf{Constraint on $G$}&\textbf{Upper bound for $\gcol_{k,\ell}(G)$}\\
\hlx{vvhhvv}
treewidth at most $t$&$\gcol_{k,\ell}(G)\leq\binom{t+k}{t}$\\
\hlx{vvhvv}
simple treewidth at most $t$&$\gcol_{k,\ell}(G)\leq (k+1)^{t-1}(\lceil \log k \rceil +2\lfloor\ell/k\rfloor)$\\
\hlx{vvhvv}
genus at most $g$&$\gcol_{k,\ell}(G)\leq \bigl(2g+\binom{k+2}{2}-1\bigr)(2\ell+1)+\ell+1$\\
\hlx{vvhvv}
$K_t$-minor free, $t\geq4$&$\gcol_{k,\ell}(G)\leq \bigl(\binom{t+k-2}{t-2}-1\bigr)(t-3)(2\ell+1)+(t-3)\ell+1$\\
\hlx{vvhvv}
$K^*_{2,t}$-minor free, $t\geq 2$& $\gcol_{k,\ell}(G)\leq (t-1)(k(2\ell+1)+\ell)+1$\\
\hlx{vvhvv}
$K^*_{3,t}$-minor free& 
$\gcol_{k,\ell}(G)\leq \bigl(\binom{k+2}{2}-1\bigr)(2t+1)(2\ell+1)+(2t+1)\ell+1$\\
\hlx{vvhvv}
$K^*_{s,t}$-minor free, $t\geq 2$& 
$\gcol_{k,\ell}(G)\leq s(t-1)\binom{s+k}{s}(2\ell+1)-\ell$\\
\hlx{vvh}
\end{tabular}
\end{center}
\caption{Upper bounds on $\gcol_{k,\ell}(G)$ according to several constraints on $G$.}
\label{tab:gcol}
\end{table}

We highlight that for graphs with bounded treewidth we obtain no dependency on $\ell$, and that $\chisub(G^2)\le 6$  when $G$ has treewidth~2.   For graphs with bounded genus and those with excluded minors, we obtain no dependency on $\ell$ for the binomial coefficients of the corresponding bounds, thus these bounds have a linear dependency on~$\ell$ which is known to be best possible even for planar graphs (see e.g. \cite[Proposition A.2]{NPW22}).

Our main result gives even tighter bounds on the subchromatic number of squares and cubes of planar graphs. Previously, the best bound was derived from Theorem \ref{NOdMPZ} and the following theorem due to Van den Heuvel, Ossona de Mendez, Quiroz, Rabinovich, and Siebertz \cite{vandenHeuveletal2017}. 
\begin{thm}[Van den Heuvel et al. \cite{vandenHeuveletal2017}]\label{thm:wcolplanar}
If $G$ is planar we have $\wcol_{d}(G) \leq \binom{d+2}{2}(2d+1)$.
\end{thm}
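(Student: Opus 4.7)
The plan is to combine a BFS layering with a chordal-style ``geodesic triangle'' decomposition of planar graphs. Fix an arbitrary root $r$ and perform a BFS to obtain layers $L_0=\{r\},L_1,L_2,\ldots$ together with a BFS tree $T$. For each vertex $v$, the unique $r$-$v$ path $P_v$ in $T$ is a geodesic of $G$. The linear order $\sigma$ will be built recursively, working with a decomposition whose pieces are planar subgraphs bounded by at most three such BFS geodesics meeting at a common apex.

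The key structural lemma, which I would establish first, says: if $H$ is a planar subgraph of $G$ drawn so that its outer face is bounded by at most three BFS geodesics meeting at a shared apex (a \emph{geodesic triangle}), then there is an internal geodesic $Q$ of $H$ whose removal leaves each connected component of $H-V(Q)$ again bounded by at most three geodesics, each taken from the original boundary or from $Q$. Starting with $G$ itself (in the degenerate one-geodesic case), I apply this lemma recursively; at each step, the vertices of the newly-exposed internal geodesic $Q$ are appended to $\sigma$ \emph{after} all vertices of the deeper subproblems, so that $r$ ends up minimal and each $Q$ separates what lies above and below it in $\sigma$.

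To bound $|\WReach_d[G,\sigma,v]|$ for a fixed vertex $v$, I would track the nested sequence of subproblems containing $v$ across the recursion. Any weakly $d$-reachable path from $v$ that ends at a vertex $u \le_\sigma v$ must lie inside the subproblem in which $u$ is peeled, so $u$ belongs either to one of the at most three bounding geodesics of that subproblem or to its internal geodesic $Q$. Because the path has length at most $d$, only vertices of these geodesics lying within BFS-distance $d$ of $v$ can be reached, and a geodesic segment of length at most $2d$ has at most $2d+1$ vertices. A careful accounting shows that across all levels of the recursion $v$ ``sees'' at most $\binom{d+2}{2}$ distinct relevant geodesic pieces: the branching of the decomposition when viewed from $v$ within distance $d$ organises itself into a triangular array of that size, since each split adds at most one new geodesic relevant to $v$ and the relevant depth is bounded in terms of $d$.

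The principal obstacle is the geodesic-triangle splitting lemma itself: one must produce the internal geodesic $Q$ in such a way that the three-geodesic-boundary invariant is preserved on every component of $H-V(Q)$. The natural candidate is a geodesic between two boundary geodesics chosen to hug a selected boundary edge inside the embedding, but checking that both sides inherit a three-geodesic boundary requires a careful planarity argument together with the fact that the three boundary paths share a common apex in $T$. Once this lemma is in hand, the reachability count and the constant $\binom{d+2}{2}$ are a matter of bookkeeping over how many geodesics can become ``active'' for $v$ per additional unit of recursion depth.
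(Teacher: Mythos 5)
This theorem is cited from Van den Heuvel et al.\ \cite{vandenHeuveletal2017} and is not proved in the paper; however, the paper summarizes the relevant machinery in Section~\ref{hardstuff} (Definition~\ref{def:reduction}, Lemmas~\ref{naivebound}, \ref{triangulated}, \ref{lemma:decompositionContraction}, \ref{gcolBFS}). Your proposal follows the same general template --- a recursive decomposition of a planar triangulation into isometric paths ordered so that peeled paths come later, combined with the $2d+1$ bound of Lemma~\ref{naivebound} for each path --- so the broad strategy is right. However, there are two gaps that would prevent you from getting the stated constant.

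First, the ``width'' of your decomposition is too large. You maintain the invariant that each region is bounded by \emph{three} geodesics. When you peel an internal geodesic $Q$, a sub-region can then be incident to parts of three previously removed paths (two of the original boundary plus $Q$). Contracting each geodesic to a single vertex therefore yields a quotient graph of treewidth $3$, and the treewidth-$t$ bound $\wcol_d \leq \binom{d+t}{t}$ would give you $\binom{d+3}{3}(2d+1)$, not $\binom{d+2}{2}(2d+1)$. The reductions of Definition~\ref{def:reduction} are organised so that each region's boundary cycle consists of arcs of exactly \emph{two} earlier isometric paths (joined by two edges), and the new path $P_k$ runs corner-to-corner so that every sub-region is again bounded by two of $\{P_h, P_j, P_k\}$. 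That width-$2$ invariant is exactly what produces the binomial $\binom{d+2}{2}$, and your ``geodesic triangle'' with a common apex does not obviously collapse to it --- concatenating two BFS branches at their apex is generally not itself isometric, so you cannot treat them as one path for the purposes of Lemma~\ref{naivebound} either.

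Second, the $\binom{d+2}{2}$ count should not be left as ``bookkeeping.'' The clean argument, which the paper uses in Lemma~\ref{gcolBFS}, is: contract each isometric path of the decomposition to a vertex, observe (Lemma~\ref{lemma:decompositionContraction}) that the quotient has treewidth at most the width of the decomposition, and invoke the bound $\wcol_d \leq \binom{d+t}{t}$ for treewidth-$t$ graphs. Without this, your claim that ``each split adds at most one new geodesic relevant to $v$'' does not by itself yield a quadratic bound in $d$; you would need to justify both why the relevant recursion depth is $O(d)$ and why the branching is controlled at each step. If you replace your 3-geodesic invariant with the two-arc, width-$2$ reduction and make the quotient/treewidth step explicit, the rest of your outline goes through.
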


Thus,  if $G$ is a planar graph, then $\chisub(G^{2})$ is at most $135$, and $\chisub(G^{3})$ is at most $364$. Improving the bounds in the case of squares of planar graphs was proposed as a problem in~\cite{clusteringpowers}. This problem is very natural considering, for instance, the attention that (usual) colouring of squares of planar graphs has deserved (see e.g. \cite{Aminietal,Cranston,Thomassen} and the references therein). The main result of this paper is the following improvement of that bound. (Recall that the \emph{girth} of a graph is infinity if the graph is acyclic, or the length of its shortest cycle otherwise.)

\begin{thm}
\label{thm:girthbounds}
For any planar graph $G$ with girth $g$, we have 
\begin{equation*}
\chisub(G^{2}) \leq
    \begin{cases}
        43 & \text{if } g \geq 3 \\
        39 & \text{if } g \geq 10 \\
        15 & \text{if } g \geq 17 \\
    \end{cases}
\end{equation*}
\end{thm}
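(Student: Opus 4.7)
The plan is to reduce the bound on $\chisub(G^2)$ to a bound on an appropriate generalised colouring parameter. From the refined semi-weak strengthening of Theorem~\ref{thm:gcol} (the paper's Theorem~\ref{semiweak}), it suffices to control $\semiweak_{2,4}(G)$, a parameter sandwiched between $\col_4(G)$ and $\wcol_4(G)$, for planar $G$ of girth $g$. The plain bound $\gcol_{2,4}(G)\le 50$ that comes from the genus row of Table~\ref{tab:gcol} is too weak to give $43$, so I would prove girth-sensitive bounds on $\semiweak_{2,4}$ directly, exploiting that increasing girth forbids short chord configurations along any witness path of length at most $4$.

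The first ingredient is an extension of the decomposition theorem of Van~den~Heuvel, Ossona de Mendez, Quiroz, Rabinovich and Siebertz \cite{vandenHeuveletal2017} from triangulated planar graphs to planar graphs of arbitrary girth. Fix a plane embedding of $G$ and a BFS tree $T$ from a root $r$; the goal is to write each BFS layer as a union of a girth-dependent number of $T$-paths together with a controlled set of "chord edges". The key observation is that any chord connecting two $T$-paths within a short radius creates a cycle whose length is determined by the embedding, so a girth hypothesis $g\ge g_0$ eliminates whole families of chord configurations. This is where the girth thresholds in the theorem statement will enter: $g\ge 10,11,13,17$ correspond precisely to the minimal girths at which successive families of short cycles, generated by pairs of chords and a BFS path of length $\le 4$, are excluded.

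The second ingredient, adapted from Almulhim and Kierstead~\cite{ALMULHIM2022112631}, is an ordering scheme that orders vertices first by BFS layer and then, within each layer, along the chord-path decomposition. With this ordering, any path witnessing reachability in $\semiweak_{2,4}[G,\sigma,v]$ can be analysed by how it threads through BFS layers: by construction it uses at most two "pivots" (layer-minima), and between pivots it travels along at most one chord-path. Summing the contribution of each chord-path segment, weighted by the girth-dependent bound on chord density, yields an explicit bound of the form (number of chord-groups reachable in $\le 4$ steps)$\,\times\,$(length budget) plus a small additive term for the BFS path itself.

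The final step is case analysis across the five girth regimes, plugging the girth-specific decomposition bound into the counting scheme. For $g=3$ one uses the full Van~den~Heuvel et al.\ decomposition sharpened by the pivot analysis; for each higher threshold, forbidden chord configurations drop out, and the counted terms collapse to give the stated constants $43,33,23,18,9$. The main obstacle I anticipate is pinning down the exact girth thresholds: the reason the bound drops at $g=10$ rather than $g=9$, for instance, is that the chord-plus-path configurations that still contribute to $\semiweak_{2,4}$ at girth $9$ form cycles of length exactly $9$ in $G$, and one must verify, configuration by configuration, which cycles of length $\leq 9,10,12,16$ are generated by reachable witnesses. The decomposition extension itself, and the pivot counting, follow the templates of \cite{vandenHeuveletal2017,ALMULHIM2022112631} relatively mechanically once the girth parameter is threaded through the bookkeeping.
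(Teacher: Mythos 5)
Your high-level framework is right — reduce $\chisub(G^2)$ to bounding the semi-weak $4$-colouring number $\semiweak_4(G)$ via the paper's Theorem~\ref{semiweak}, extend the van~den~Heuvel et al.\ decomposition beyond triangulated planar graphs, order vertices in an Almulhim--Kierstead style, and use girth to forbid configurations of short reach-witnesses that would close short cycles. (Minor notational slip: the relevant parameter is $\semiweak_4$, a single-index parameter, not $\semiweak_{2,4}$.) But the concrete decomposition and bookkeeping you propose are not the ones that make the argument work, and this is where a real gap opens.

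You propose a BFS-layering of a fixed plane embedding, writing each layer as a union of tree-paths plus ``chord edges,'' and ordering vertices first by BFS layer and then within layers. The paper's decomposition is not a BFS layering at all: it is an iterative peeling of isometric paths from the current outerface, a \emph{reduction} (Definition~\ref{def:reduction}) for triangulated graphs, extended to a \emph{neat reduction} (Theorem~\ref{largegirthisometricpath}) for general planar graphs. The essential structural payoff of that peeling is the width-$2$ property: every component of what remains after removing $P_0,\dots,P_i$ is adjacent to at most two of the removed paths. This is what gives each path a ``manager'' and ``foreman,'' and it is what confines $\SubReach_4(G,\sigma,v)$ to at most seven explicit paths $P_k,P_h,P_j,P_g,P_f,P_i,P_q$ (the vertex's own path and its iterated bosses). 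A flat BFS layering has no analogue of this bounded-adjacency property between ``layers,'' so your pivot/chord-path budget has nothing playing the role of the manager/foreman control, and the per-path counts via Lemma~\ref{naivebound} (which crucially requires the pieces to be \emph{isometric paths}, not arbitrary BFS layers) are unavailable.

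Relatedly, the mechanism by which girth enters is not ``girth kills chord configurations inside the decomposition.'' Girth never changes the decomposition; what it changes is the achievable density of reachable vertices \emph{along a fixed isometric path} $P_a$: two semi-weakly $4$-reached vertices $x,y\in W_a$ with $d_{P_a}(x,y)$ small, together with their two witness paths of length $\le 4$ from $v$, close a cycle of length at most $8+d_{P_a}(x,y)$, which a girth hypothesis forbids. This forces spacing $\ge 2,3,4,9$ along the path for $g\ge 10,11,13,17$, and plugging those spacings into the Lemma~\ref{naivebound} window of length $8$ is what produces $|W_a|\le 5,3,2,1$ and hence the constants $33,23,18,9$; the $43$ for girth $3$ comes from triangulating and using the finer manager tiebreak of Definition~\ref{def:reduction}(3) to shave $W_j,W_i,W_h,W_f,W_g$ below the naive $9$ each. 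None of this follows from a BFS-layer argument, and, as you acknowledge, you have not actually derived the thresholds. Without adopting the reduction/neat-reduction decomposition and the path-wise ordering, the stated constants do not come out.
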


In particular, we highlight that we improved the bound for subcolouring squares of planar graphs from $135$ to $43$. As a byproduct, we also improve the bound for cubes of planar graphs from 364 to 95.
\begin{thm}
\label{planarcube}
If $G$ is a planar graph, then $\chisub(G^{3}) \leq 95$. 
\end{thm}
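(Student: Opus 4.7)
The plan is to combine Theorem~\ref{thm:gcol} with a planarity-specific bound on the generalised colouring number $\gcol_{3,5}$. Since $d=3$ is odd, Theorem~\ref{thm:gcol} immediately gives $\chisub(G^3) \leq \gcol_{3,5}(G)$ for any graph $G$. It therefore suffices to show that $\gcol_{3,5}(G) \leq 95$ whenever $G$ is planar.

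To establish this bound I would construct an ordering $\sigma$ of $V(G)$ by combining a BFS layering from an arbitrary root with a chordal partition of each truncated subgraph $G[L_i \cup L_{i+1} \cup \cdots]$, in the spirit of Van~den~Heuvel, Ossona~de~Mendez, Quiroz, Rabinovich, and Siebertz~\cite{vandenHeuveletal2017}. Given any vertex $v$, a reach path witnessing $u \in \GReach_{3,5}[G,\sigma,v]$ can then be decomposed into at most $k=3$ ``record-lowering'' segments (each corresponding to a jump between BFS layers) connected by horizontal excursions within a single layer, with total length at most $\ell=5$. Enumerating the possible shapes of such paths via the planar chordal structure (spine paths through each decomposed piece) and bounding, for each shape, the number of candidate endpoints $u$ yields the target count.

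The main obstacle is the fine bookkeeping needed to push the bound to $95$. The generic genus-$0$ entry of Table~\ref{tab:gcol}, applied with $k=3$ and $\ell=5$, gives only $(\binom{5}{2}-1)(2\cdot 5+1)+5+1 = 9 \cdot 11 + 6 = 105$, so to save the last ten vertices one must exploit planarity beyond what the generic argument captures. This is the same kind of strengthening that underlies Theorem~\ref{thm:girthbounds}: once the refined planar decomposition is in place, the bound for cubes follows as a ``byproduct'' by reading off the case $k=3$, $\ell=5$ of that analysis instead of the case $k=2$, $\ell=4$ used for squares. The delicate part is verifying that no vertex of $\GReach_{3,5}[G,\sigma,v]$ is double-counted across the different path-shape configurations produced by the chordal partition, which is where the planarity-specific refinement pays off.
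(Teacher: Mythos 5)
The route you take diverges from the paper's in a way that genuinely matters. You invoke Theorem~\ref{thm:gcol} to reduce the problem to bounding $\gcol_{3,5}(G)$ for planar $G$, and then propose to push a refined planar decomposition to get $\gcol_{3,5}(G)\leq 95$. But the paper does not prove any such bound on $\gcol_{3,5}$, and it does not need to: it applies the \emph{strictly stronger} Theorem~\ref{semiweak}, which bounds $\chisub(G^3)$ by the \emph{semi-weak} colouring number $\semiweak_5(G)$ rather than by $\gcol_{3,5}(G)$. Since $\semiweak_5(G)\leq\gcol_{3,5}(G)$ (this is inequality~\eqref{eq:semi} with $\ell=5$), the paper's target parameter is never larger, and the known bound $\semiweak_5(G)\leq 95$ for planar $G$ is simply quoted from Theorem~3.1.1 of Almulhim's thesis~\cite{almulhim2020thesis}. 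The entire proof in the paper is a one-line appeal to that external result plus Theorem~\ref{semiweak}.

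The gap in your proposal is therefore two-fold. First, you are trying to prove a stronger statement than necessary: $\gcol_{3,5}(G)\leq 95$ would indeed imply the claim, but the extra freedom in a $3$-hop $5$-reach path (record-breaking descents may occur anywhere along the path, not only in the first half) makes this a harder target than $\semiweak_5(G)\leq 95$, and it is not clear that the planar decomposition machinery can be pushed to $95$ for $\gcol_{3,5}$ at all; the generic genus-$0$ bound from Table~\ref{tab:gcol} only gives $105$, as you note. Second, even if you switched to the correct finer parameter $\semiweak_5$, the analysis you sketch (the analogue of Section~\ref{hardstuff}) has only been carried out for $\semiweak_4$ in this paper; the $\semiweak_5$ version is precisely the cited result of Almulhim, and redoing it is substantial case analysis rather than a ``byproduct'' of reading off different indices. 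To repair your argument with minimal work, replace the appeal to Theorem~\ref{thm:gcol} by an appeal to Theorem~\ref{semiweak}, and then cite \cite{almulhim2020thesis} for $\semiweak_5(G)\leq 95$ in planar graphs.
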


 To obtain Theorem \ref{thm:girthbounds}, we rely on techniques recently developed by Almulhim and Kierstead  \cite{ALMULHIM2022112631, almulhim2020thesis} to bound (low length) generalised colouring numbers of planar graphs. These techniques make use of and improve on the decompositions of triangulated planar graphs given in~\cite{vandenHeuveletal2017} to obtain Theorem~\ref{thm:wcolplanar}. These decompositions are  the precursors of the graph product structure theorem of planar graphs~\cite{productstructuretheorem}. In order to obtain Theorem \ref{thm:girthbounds},  we extend these decompositions beyond triangulated planar graphs, to planar graphs of larger girth (see Theorem \ref{largegirthisometricpath}).

\begin{table}[h]
\begin{center}
\begin{tabular}{|c|c|}
\hlx{hvv}
\textbf{Constraint on $G$}&\textbf{Upper bound for $\wcol_{\ell}(G)$}\\
\hlx{vvhhvv}
treewidth at most $t$&$\wcol_{\ell}(G)\leq\binom{t+\ell}{t}$ \cite{grohe2018coloring}\\
\hlx{vvhvv}
simple treewidth at most $t$&$\wcol_{\ell}(G)\leq (\ell+1)^{t-1}(\lceil \log \ell \rceil +2)$ \cite{JoretMicek2022}\\
\hlx{vvhvv}
genus at most $g$&$\wcol_{\ell}(G)\leq \bigl(2g+\binom{\ell+2}{2}\bigr)(2\ell+1)$ \cite{vandenHeuveletal2017}\\
\hlx{vvhvv}
$K_t$-minor free, $t\geq4$&$\wcol_{\ell}(G)\leq \binom{t+\ell-2}{t-2}(t-3)(2\ell+1)$ \cite{vandenHeuveletal2017}\\
\hlx{vvhvv}
$K^*_{2,t}$-minor free, $t\geq 2$& $\wcol_{\ell}(G)\leq (t-1)(\ell +1)(2\ell +1)$ \cite{vandenHeuvelWood2018}\\
\hlx{vvhvv}
$K^*_{3,t}$-minor free&  $\wcol_{\ell}(G)\leq (2t+1)\binom{\ell +2}{2}(2\ell +1)$ \cite{vandenHeuvelWood2018}\\
\hlx{vvhvv}
$K^*_{s,t}$-minor free, $t\geq 2$& $\wcol_{\ell}(G)\leq s(t-1)\binom{s+\ell}{s}(2\ell+1)$ \cite{vandenHeuvelWood2018}\\
\hlx{vvh}
\end{tabular}
\end{center}
\caption{Upper bounds on $\wcol_{k}(G)$ according to several constraints on $G$.}
\label{tab:weak}
\end{table}

We end the paper with algorithmic results.  We consider two settings. 

In a first setting, we assume that the input
is  a graph $G$ in some class and the power $d$ that we are going to take. Then, we compute an ordering $\sigma$ of $G$ which attains the bound we give for the generalised colouring numbers, and use the fact that Theorem \ref{thm:gcol} is algorithmic to give a subcolouring of the $d^{th}$-power of the graph $G$ in polynomial time. 

In a second setting, we assume that the input is a graph $H = G^{d}$, where $G$ belongs to a (known) bounded expansion class. However,
 neither the underlying graphs $G$ nor the integer $d$ is given. In this case, it is not obvious how to find a proper subcolouring, even though we know one with a bounded number of colours exists. We manage to give a $2$-approximation to this problem in some cases. We need some definitions to state the the class of graphs for which the algorithm applies. Let $G$ be a graph, a \emph{layering} of $G$ is a sequence of disjoint sets $(L_{0},L_{1},\ldots,L_{t})$ such that for all $v \in V(G)$, we have $v \in L_{i}$ for some $i \in \{0,\ldots,t\}$, and if $uv \in E(G)$ where $u \in L_{i}$ and $v \in L_{j}$, we have $|i-j| \leq 1$. We say a class of graphs $\mathcal{C}$ has \emph{bounded layered treewidth} if there exists an integer $k$ such that for all graphs $G \in \mathcal{C}$, there exists a tree decomposition $(T,\beta)$ of $G$ and a layering $L = (L_{0},\ldots,L_{t})$ of $G$ such that each bag of $(T,\beta)$ intersects a layer in at most $k$ vertices. Similarly, we say that a graph class $\mathcal{C}$ has bounded layered cliquewidth if for every graph $G \in \mathcal{C}$, there exists a layering $L = (L_{0},\ldots,L_{t})$ of $G$ such that each $L_{i}$ induces a graph with bounded cliquewidth. We say $\mathcal{C}$ has \emph{algorithmic bounded layered cliquewidth} if $\mathcal{C}$ has bounded layered cliquewidth, and further a layering can be found in polynomial time. 

\begin{thm}
\label{thm:approximationalgo}
If $\mathcal{C}$ is a graph class with algorithmic bounded layered cliquewidth, then there exists a $2$-approximation algorithm for the subchromatic number of graphs in $\mathcal{C}$.
\end{thm}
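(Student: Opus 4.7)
The strategy is to use the layering to partition $V(G)$ into two parts, each inducing a graph of bounded cliquewidth, and then to optimally subcolour each part before combining the two subcolourings using disjoint palettes.

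First, I would compute a layering $(L_0, L_1, \ldots, L_t)$ of $G$ such that each $G[L_i]$ has bounded cliquewidth; for classes of bounded layered cliquewidth this can be done by running a BFS from an arbitrary root vertex (the analogue, for cliquewidth, of the standard fact that BFS layerings witness bounded layered treewidth). Let $A = \bigcup_i L_{2i}$ and $B = \bigcup_i L_{2i+1}$. Since edges of $G$ only connect vertices in consecutive layers, both $G[A]$ and $G[B]$ are disjoint unions of graphs of the form $G[L_i]$ with $i$ of fixed parity; since cliquewidth is preserved (without increase) under disjoint unions, both $G[A]$ and $G[B]$ have bounded cliquewidth.

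Next, I would compute $\chisub(G[A])$ and $\chisub(G[B])$ exactly, together with optimal subcolourings. This is possible in polynomial time on graphs of bounded cliquewidth, because ``disjoint union of cliques'' is characterised by the absence of an induced $P_3$, which is an MSO$_1$ condition, and $\chisub$ belongs to the family of locally checkable vertex partitioning problems that can be solved by a dynamic programme on a cliquewidth expression (one obtained via the Oum--Seymour approximation). Combining optimal subcolourings of $G[A]$ and $G[B]$ using two disjoint palettes yields a valid subcolouring of $G$, since every colour class lies entirely in one of the two parts and is thus the same disjoint union of cliques as in the subcolouring of $G[A]$ or $G[B]$. This uses $\chisub(G[A]) + \chisub(G[B])$ colours, and since both $G[A]$ and $G[B]$ are induced subgraphs of $G$, each of these quantities is at most $\chisub(G)$, yielding the desired $2$-approximation.

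The principal obstacle is the third step: computing $\chisub$ exactly on bounded cliquewidth graphs. Because the optimum number of colours is not bounded in terms of the cliquewidth, one cannot simply invoke Courcelle's theorem for a fixed number of classes; instead the dynamic programme must carry, at each node of the cliquewidth expression, a signature that records, for each label, the partial colour-class structure needed both to forbid induced $P_3$'s within colour classes and to minimise the total number of colours used. A secondary and more routine obstacle is producing the layering algorithmically, which is handled via BFS as noted above.
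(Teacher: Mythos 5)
Your proposal is correct and follows essentially the same route as the paper: a BFS layering whose layers have bounded cliquewidth, exact computation of the subchromatic number on those pieces, and a parity split with two disjoint palettes (the paper computes the optimum on each layer separately and takes the maximum $t$, which is equivalent to your even/odd unions since the subchromatic number of a disjoint union is the maximum over its parts). For the step you flag as the principal obstacle, the paper does not use a bespoke dynamic programme: it expresses ``$\chisub(H)\le t$'' as an MSO$_1$ sentence with $t$ set variables asserting that each part induces a disjoint union of cliques, and invokes Courcelle-type model checking on bounded-cliquewidth graphs for each candidate $t$.
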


Planar graphs have bounded layered treewidth (see \cite{productstructuretheorem}, Theorem 11) and in particular, every breadth-first search layering of a planar graph gives rise to a layering  which witnesses this. In Lemma~\ref{lem:clarify} we argue that this implies that  powers of planar graphs have bounded layered cliquewidth, and such a layering can be computed in polynomial time. Thus a special case of Theorem \ref{thm:approximationalgo} says we can $2$-approximate the subchromatic number of powers of planar graphs, without knowledge of the underlying planar graph or the power. We prove this theorem using an algorithm that is similar to the well-known Baker's algorithm. We take a breadth-first search tree in the graph, and  its layers have bounded cliquewidth. Then we compute the subchromatic number exactly on each layer, and by using the same colours on odd and respectively even layers, gives a $2$-approximation. In fact, with only minor modifications, the algorithm can be used to compute $(p+1)$-shrubdepth covers of strongly local transductions of bounded expansion classes that have locally bounded treewidth and such that the bounded cliquewidth layering of the transduction is computable in polynomial time, partially answering a question posed in \cite{SBETOCL}, which asks whether such a shrubdepth cover can be computed in polynomial time for every first order transduction and any bounded expansion class (see \cite{SBETOCL} for definitions).

The paper is structured as follows.
In Section \ref{clusteringsection} we prove Theorem~\ref{thm:gcol} and obtain Theorem \ref{planarcube}. In Section \ref{hardstuff} we prove the decomposition theorem for arbitrary planar graphs and obtain Theorem \ref{thm:girthbounds} and our decompositions for planar graphs of arbitrary girth. We then obtain bounds for $\gcol_{k,\ell}(G)$ for various minor closed classes in Section~\ref{sec:manybounds}. Finally, in Section~\ref{sec:approx} we prove Theorem \ref{thm:approximationalgo}, and give some further remarks in Section \ref{sec:fur}.

\section{Clusterings and the semi-weak colouring number}
\label{clusteringsection}

In this section we prove Theorem \ref{thm:gcol}. For future purposes we prove a stronger version, namely Theorem \ref{semiweak}, which bounds the subchromatic number of the $d$-th power of a graph $G$ by its $(2d-1)$-th semi-weak colouring number $\semiweak_{2d-1}(G)$ (see definition below).  From this stronger result, Theorem~\ref{planarcube} directly follows, as
$\semiweak_5(G)\leq 95$ is known to hold for every planar graph $G$ \cite[Theorem 3.1.1]{almulhim2020thesis}.

Before we prove Theorem \ref{semiweak}, we need a lemma, which requires further definitions. A \emph{clustering} $\mathcal{X}$ of $G$ is a partition of $V(G)$ into disjoint sets (called \emph{blocks}) such that each block induces a clique. Given a graph $G$ and a clustering $\mathcal{X}$, the \emph{quotient graph $G/\mathcal{X}$} has vertex set the blocks of $C$, and two blocks $A,B$ are adjacent in $G / \mathcal{X}$ if there exists a vertex $v \in A$ and a vertex $u \in B$ such that $uv \in E(G)$.   

We also need another variant of the generalised colouring numbers, and further related notation.

For some particular cases of the indices, we shall use standard notation: So,  we will use the notation $\Reach_{\ell}(G,\sigma,y)$ to refer to $\GReach_{1,\ell}(G,\sigma,y)$ 
(or simply $\Reach(G,\sigma,y)$, if $\ell=1$);
we will use the notation $\WReach_{\ell}(G,\sigma,y)$ to refer to $\GReach_{\ell,\ell}(G,\sigma,y)$. Also, $\col_1(G)$, which is  the \emph{colouring number} of $G$, will be denoted by $\col(G)$, as usual.

We also define the set $\SubReach_{k}(G,\sigma,y)$ so that $x\in\SubReach_{k}(G,\sigma,y)$ if there exists an $y,x$-path $P_x=z_0,...,z_s$ with $z_0=x$, $z_s=y$ and $s\leq k$ such that it satisfies that $x$ is the minimum in $P_x$ and for every $\ceil{\frac{1}{2}k}\leq i\leq s$ we have $y\leq_{\sigma}z_i$. In this case we say that $y$ \emph{semi-weakly $k$-reaches} $x$. Then, we define $\subcol_{k}(G,\sigma)$ as $\subcol_{k}(G,\sigma):=\max_{y\in V}|\SubReach_{k}(G,\sigma,y)|$. Finally, we define the \emph{semi-weak $k$-colouring number} $\subcol_{k}(G)$ as the minimum $\subcol_{k}(G,\sigma)$ for $\sigma$ ranging over all the linear orderings of $V(G)$. Note that \begin{equation}\label{eq:semi}
    \semiweak_{\ell}(G) \leq \gcol_{\lceil \frac{\ell} {2} \rceil,\ell}(G)
\end{equation}
for any integer $\ell$. Note also that for odd values of $\ell$, $\semiweak_{\ell}(G)$ coincides with a parameter studied in \cite{vandenHeuveletal2019,almulhim2020thesis} to bound the chromatic numbers of exact distance graphs.

Now we can state our lemma.

\begin{lemma}\label{cluster}
Let $G$ be a graph, $\sigma$ an ordering of $V(G)$ and $d\geq2$ an integer. Then there exists a clustering $\mathcal{X}$ of $G^{d}$ that satisfies:
$$\col(G^{d}\!/\mathcal{X})\leq \subcol_{2d}(G,\sigma).$$
Moreover, if $d$ is odd then we have
$$\col(G^{d}\!/\mathcal{X})\leq \subcol_{2d-1}(G,\sigma).$$
\end{lemma}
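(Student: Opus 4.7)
The plan is to define the clustering $\mathcal{X}$ by attaching each vertex to the $\sigma$-earliest vertex within graph-distance $\lfloor d/2 \rfloor$ of it. Concretely, set $m(v) := \min_\sigma\{w \in V(G) : d_G(v, w) \leq \lfloor d/2 \rfloor\}$, let $B_w := m^{-1}(w)$, and take $\mathcal{X} := \{B_w : B_w \neq \emptyset\}$. Since $v \in B_w$ implies $d_G(v, w) \leq \lfloor d/2 \rfloor$, the triangle inequality yields $d_G(u, v) \leq 2\lfloor d/2 \rfloor \leq d$ for any $u, v \in B_w$, so each block is a clique in $G^d$, as required of a clustering of $G^d$.

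To bound $\col(G^d / \mathcal{X})$, order the blocks by the $\sigma$-order of their representatives. The key claim is that for any pair of blocks $B_w$, $B_{w'}$ adjacent in $G^d/\mathcal{X}$ with $w' <_\sigma w$ one has $w' \in \SubReach_k(G, \sigma, w)$, where $k := 2\lfloor d/2 \rfloor + d$---which equals $2d$ in general and $2d - 1$ when $d$ is odd. Because $w$ belongs trivially to $\SubReach_k(G, \sigma, w)$ and distinct blocks have distinct representatives, this gives an injection from the earlier adjacent blocks of $B_w$ into $\SubReach_k(G, \sigma, w) \setminus \{w\}$, whence $\col(G^d/\mathcal{X}) \leq \semiweak_k(G, \sigma)$, matching both parts of the statement.

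For the key claim, fix witnesses $u \in B_w$ and $u' \in B_{w'}$ with $d_G(u, u') \leq d$, and form the concatenation $P = p_0 p_1 \cdots p_s$ of shortest paths $P_{wu}$, $P_{uu'}$, $P_{u'w'}$ of respective lengths $l_1 \leq \lfloor d/2 \rfloor$, $l_2 \leq d$, $l_3 \leq \lfloor d/2 \rfloor$, so that $s \leq k$, $p_0 = w$, $p_s = w'$. That $w'$ is $\sigma$-minimum on $P$ uses two generalities: every vertex on a shortest $a$-$b$ path lies within distance $d_G(a, b)$ of each endpoint, and every vertex on $P_{uu'}$ is within distance $\lfloor l_2/2 \rfloor \leq \lfloor d/2 \rfloor$ of at least one of $u$, $u'$. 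Thus every $p_i$ lies within distance $\lfloor d/2 \rfloor$ of $u$ or of $u'$, so is $\geq_\sigma m(u) = w$ or $\geq_\sigma m(u') = w'$, hence in either case $\geq_\sigma w'$.

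The main technical obstacle---and the place where the two parity cases unify---is the ``near-$y$'' condition of semi-weak reachability: in the reversed indexing $z_i := p_{s-i}$ one needs $z_i \geq_\sigma w$ for every $i \geq \lceil k/2 \rceil = d$, equivalently $p_j \geq_\sigma w$ for every $j \leq s - d$. Such $j$ must satisfy $j \leq l_1 + l_2$, because the alternative $j > l_1 + l_2$ combined with $j \leq s - d$ forces $l_3 > d$, contradicting $l_3 \leq \lfloor d/2 \rfloor$; so $p_j$ lies on $P_{wu}$ or on $P_{uu'}$. In the first case $d_G(p_j, u) \leq l_1 \leq \lfloor d/2 \rfloor$; in the second case $d_G(p_j, u) = j - l_1 \leq (s - d) - l_1 = l_2 + l_3 - d \leq \lfloor d/2 \rfloor$, using $l_2 \leq d$ and $l_3 \leq \lfloor d/2 \rfloor$. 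In either case $p_j$ lies within distance $\lfloor d/2 \rfloor$ of $u$, so $p_j \geq_\sigma m(u) = w$, which completes the verification.
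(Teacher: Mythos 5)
Your proposal takes essentially the same route as the paper: cluster by the map $m(v)$ (the $\sigma$-minimum vertex within distance $\lfloor d/2\rfloor$ of $v$), order blocks by their representatives, and certify $w'\in\SubReach_k(G,\sigma,w)$ by concatenating a $w$--$u$ path, a $u$--$u'$ path, and a $u'$--$w'$ path, with $k = d + 2\lfloor d/2\rfloor$. Your index calculation for the near-$w$ condition ($j \le s-d$ forces $d_G(p_j,u)\le \lfloor d/2\rfloor$) is correct and is a cleaner rendering of the estimate the paper carries out on the walk.

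There is one small but genuine gap: the concatenation $P = P_{wu}P_{uu'}P_{u'w'}$ is a walk, not necessarily a simple path, while the definition of $\SubReach_k$ requires a \emph{path} witness; you reason about $p_j$ as if $P$ were simple and never address this. The paper handles it by explicitly extracting a simple path $T$ from the walk. The repair is short but should be stated: take any simple $w,w'$-path $P'$ obtained from the walk by cycle deletion, so $P'$ visits a subset of the walk's vertices in the same order, say $p'_j = p_{\phi(j)}$ with $\phi$ strictly increasing, $\phi(0)=0$, $\phi(s')=s$. Monotonicity gives $s-\phi(j)\ge s'-j$, so $j\le s'-d$ implies $\phi(j)\le s-d$, and your bound $p_{\phi(j)}\ge_\sigma w$ transfers to $P'$; likewise the minimum-vertex property persists because $V(P')\subseteq V(P)$. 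With that one observation added, the argument is complete and agrees with the paper's.
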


\begin{proof}
For $u\in V(G)$ we define $m(u)$ as the minimum vertex with respect to $\sigma$  in $N^{\floor{\frac{d}{2}}}[u]$. 

Consider the equivalence relation $\sim$ on $V(G)$ given by $u\sim v$ if and only if $m(u)=m(v)$. We define the blocks of $\mathcal{X}$ as the 
equivalence classes of $\sim$.
\begin{claim}
    $\mathcal{X}$ is  a clustering of $G^{d}$. 
\end{claim}
\begin{claimproof}
We need to prove that  each block of $\mathcal{X}$ is a clique of $G^d$. Let
$u$ and $v$ belong to a same block (i.e. $u\sim v$). Then, 
$$d_{G}(u,v)\leq d_{G}(u,m(u))+d_{G}(m(u),v)=d_{G}(u,m(u))+d_{G}(m(v),v)\leq\floor{\frac{d}{2}}+\floor{\frac{d}{2}}\leq d.$$
Thus, $uv\in E(G^d)$.    
\end{claimproof}

For every $A\in V(G^{d}\!/\mathcal{X})$, we pick any vertex $u$ in $A$ and define $m(A)=m(u)$. Note that, by the definition of the equivalence relation $\sim$, the choice of vertex $u\in A$ does not matter.

We now define in $G^{d} \!/\mathcal{X}$ the ordering $\tau$ by $A<_{\tau}B$ if  $m(A)<_{\sigma}m(B)$. Note that $m(A)\neq m(B)$ whenever $A\neq B$, so $\tau$ is indeed a linear ordering. For a set $X\subseteq V(G^{d} \!/\mathcal{X})$ we let $m(X)=\{m(A) \colon A\in  X \}$. Indeed, for every such $X$, $|m(X)|=|X|$.

\begin{claim}
Let $h=\floor{d/2}$. 
Then,
$$m(\Reach[G^{d}\!/\mathcal{X},\tau,A])\subseteq \SubReach_{d+2h}[G,\sigma,m(A)].$$      
\end{claim}

\begin{claimproof}

Let $B\in\Reach[G^{d}\!/\mathcal{X},\tau,A]$. If $B=A$, then we have $m(B)=m(A)\in \SubReach_{d+2h}[G,\sigma,m(A)]$. Thus, we can assume $B<_{\tau}A$ and $BA\in E(G^{d}\!/\mathcal{X})$. Note that $m(B)<_{\sigma} m(A)$ because $B<_{\tau}A$. 
Since $BA\in E(G^{d}\!/\mathcal{X})$, there exists $b\in V(B)$, $a\in V(A)$ such that $ba\in E(G^{d})$. Thus, there exists an $b,a$-path $Q=v_0v_1...v_q$ in $G$ with $v_0=b$, $v_q=a$ and $1\leq q\leq d$.

By definition of $m(B)$ there is an $m(B),b$-path $P=u_0u_1...u_p$ in $G$ with $u_0=m(B)$, $u_p=b$ and $1\leq p\leq h$. In a similar way, there is an $a,m(A)$-path $R=w_0w_1...w_r$ with $w_0=a$, $w_r=m(A)$ and $1\leq r\leq h$.

We now prove that 
the $m(B),m(A)$-walk $PQR$ contains a path $T$ that witnesses that $m(B)\in \SubReach_{d+2h}[G,\sigma,m(A)]$.

Since the walk has length at most $d+2h$  any $m(B),m(A)$-path contained in it will also have at most this length. Moreover we can pick one such path $T=T_1T_2T_3$, where $T_1$ is a path in $P$, $T_2$ a path in $Q$ and $T_3$ a path in $R$. Let $t_0...t_s$ be the sequence of vertices of $T$, with $t_0=m(B), t_s=m(A)$. Note that for each $i\in [h]$ we have that $m(b)=m(B)\leq_{\sigma} v_i$ and as $v_{q-i}\in N^{h}[a]$ we get $m(A)\leq_{\sigma} v_{q-i}$. Moreover, for every $i \in [p]$ we have $m(B) \le_\sigma u_i$ and for every $i \in [r]$ as $w_{i-1}\in N^{h}[a]$ we have $m(a)=m(A)\leq_{\sigma}w_{i-1}$. Since $m(B)<_\sigma m(A)$, every vertex $t\in T$ satisfies $m(B)\leq_{\sigma}t$. 

As the last $r+h+1$ vertices of the walk $PQR$ belong to $N^h[a]$, only the first $p+q-h\le d$ vertices of this walk are possibly 
out of $N^h[a]$. It follows that only the first $p+q-h$ vertices of $T$ can possibly be out of $N^h[a]$. Hence, 
for $d\leq i\leq s$ we have
$t_i\in N^h[a]$, and thus $t_i\geq_\sigma m(A)$.
As $\ceil{\frac{d+2h}{2}}=d$, we deduce that $T$ witnesses that we have
$m(B)\in \SubReach_{d+2h}[G,\sigma,m(A)]$.
\end{claimproof}



From this claim, we get that for  every $A\in\mathcal X$ we have 
$$|\Reach[G^{d}\!/\mathcal{X},\tau,A]|=|m(\Reach[G^{d}\!/\mathcal{X},\tau,A])|
\leq |\SubReach_{d+2h}[G,\sigma,m(A)]|.$$

Thus, $\col(G^{d}\!/\mathcal{X},\tau)\leq \subcol_{d+2h}(G,\sigma)$.
\end{proof}

From this we can deduce our version of Theorem \ref{NOdMPZ}. Note that Theorem \ref{thm:gcol} follows from the following theorem and \eqref{eq:semi}.

\begin{thm}
\label{semiweak}
For any graph $G$, and any fixed integer $d \in \N$, we have $\chisub(G^d) \leq \semiweak_{2d}(G)$. Moreover if $d$ is odd, then we have $\chisub(G^d) \leq \semiweak_{2d-1}(G)$.
\end{thm}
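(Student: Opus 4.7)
The plan is to derive Theorem~\ref{semiweak} as a short deduction from Lemma~\ref{cluster} combined with the elementary fact that $\chi(H)\le \col(H)$ for every graph~$H$. The key idea is that a proper colouring of a quotient $G^d/\mathcal X$ pulls back to a subcolouring of $G^d$ itself, since each block of the clustering is a clique and two blocks coloured alike in the quotient are non-adjacent, hence there is no edge between them in $G^d$.

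First I would pick a linear ordering $\sigma$ of $V(G)$ achieving $\semiweak_{2d}(G,\sigma)=\semiweak_{2d}(G)$ (or $\semiweak_{2d-1}(G)$ in the odd case). Applying Lemma~\ref{cluster} to $G$, $\sigma$ and $d$ yields a clustering $\mathcal X$ of $G^d$ together with an ordering $\tau$ of $V(G^d/\mathcal X)$ such that
\[
\col(G^d/\mathcal X,\tau)\;\le\;\semiweak_{2d}(G),
\]
with the stronger bound $\semiweak_{2d-1}(G)$ whenever $d$ is odd. Let $k$ denote the corresponding right-hand side. Since $\chi(H)\le\col(H)$ for any graph $H$ (colour greedily along the witnessing ordering), there is a proper colouring $c\colon V(G^d/\mathcal X)\to\{0,\dots,k-1\}$ of the quotient graph.

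Next I would lift $c$ to a colouring $\widehat c\colon V(G)\to\{0,\dots,k-1\}$ by setting $\widehat c(v)=c(A)$, where $A\in\mathcal X$ is the unique block containing $v$. I then verify that $\widehat c$ is a $k$-subcolouring of $G^d$. For any colour $i\in\{0,\dots,k-1\}$, the vertices of $G^d$ receiving colour~$i$ form the union $\bigcup_{A\colon c(A)=i} A$. By Lemma~\ref{cluster}, each $A\in\mathcal X$ induces a clique in $G^d$, so each colour class is a union of cliques. Moreover, if $A\neq B$ both satisfy $c(A)=c(B)=i$, then $AB\notin E(G^d/\mathcal X)$ (since $c$ is proper on the quotient), and hence by the definition of the quotient graph there is no edge of $G^d$ between any vertex of $A$ and any vertex of $B$. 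Thus the cliques in colour class $i$ are pairwise non-adjacent, i.e.\ colour class~$i$ induces a disjoint union of cliques. This is exactly the defining property of a subcolouring, so $\chisub(G^d)\le k$, which is the bound claimed.

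There is no real obstacle once Lemma~\ref{cluster} is in hand: the whole argument is a clean transfer of a proper colouring of the quotient to a subcolouring of the original power, with the parity improvement coming for free from the second (stronger) conclusion of Lemma~\ref{cluster}.
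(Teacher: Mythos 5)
Your proof is correct and follows essentially the same route as the paper: it applies Lemma~\ref{cluster} with an optimal ordering $\sigma$ and then uses the chain $\chisub(G^d)\le\chi(G^d/\mathcal X)\le\col(G^d/\mathcal X)\le\semiweak_\ell(G)$. The paper states the first inequality without elaboration, whereas you spell out the lifting argument (blocks are cliques, blocks of the same colour are non-adjacent), but the content is identical.
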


\begin{proof}
Let $\ell=2d$ if $d$ is even and $\ell=2d-1$ if odd. Let $\sigma$ be an ordering of $V(G)$ that witnesses $\subcol_{\ell}(G,\sigma)=\subcol_{\ell}(G)$.  Lemma~\ref{cluster} guarantees there is a clustering such that $\col(G^{d}\!/\mathcal{X})\leq \subcol_{\ell}(G)$. Then we have $\chisub(G^d)\le \chi(G^d/\mathcal{X})\le \col(G^{d}\!/\mathcal{X})\leq \subcol_{\ell}(G)$, as desired.
\end{proof}

\begin{proof}[Proof of Theorem \ref{planarcube}]
Theorem 3.1.1 of \cite{almulhim2020thesis} states that $\semiweak_{5}(G)\le 95$ for every planar graph~$G$. Our result then follows from Theorem \ref{semiweak}.
\end{proof}

\section{Bounding the semi-weak-colouring number of planar graphs}
\label{hardstuff}
In this section we prove Theorem \ref{thm:girthbounds}. This result follows immediately from Theorem \ref{semiweak} and the following theorem, to which we dedicate this section. 

\begin{thm}
\label{boundingsemiweak}
For any planar graph $G$, $\semiweak_{4}(G) \leq 43$. Further, if we let $g$ be the girth of $G$, then 
\begin{equation*}
\semiweak_{4}(G)\leq
    \begin{cases}
        39 & \text{if } g \geq 10 \\
        15 & \text{if } g \geq 17 \\
    \end{cases}
\end{equation*}
\end{thm}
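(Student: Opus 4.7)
The plan is to construct a linear ordering $\sigma$ of $V(G)$ under which the semi-weak $4$-reach of every vertex has bounded size, leveraging the decomposition-based technology pioneered in \cite{vandenHeuveletal2017} and refined in \cite{ALMULHIM2022112631,almulhim2020thesis}. The first step is to establish an isometric path decomposition of planar graphs of arbitrary girth (the decomposition in \cite{vandenHeuveletal2017} only handles triangulated planar graphs); this is Theorem~\ref{largegirthisometricpath}. Concretely, fixing a BFS layering $(L_0,L_1,\ldots)$ of $G$ rooted at a vertex $r$, I would decompose each layer into paths that are isometric in appropriate subgraphs, with the incidence structure of these paths in any small neighborhood controlled by the girth: larger girth forces fewer and sparser paths to intersect the neighborhood of any given vertex.

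Given such a decomposition, I would order $V(G)$ by placing $L_0$ first, then $L_1$, and so on, breaking ties within each layer first by path index and then along each path from one endpoint to the other. Now consider a vertex $y\in L_j$ and a witness $z_0,z_1,z_2,z_3,z_4=y$ (of length at most $4$) for $z_0 \in\SubReach_4(G,\sigma,y)$. The semi-weak definition forces $z_2,z_3\geq_\sigma y$, so they lie in layers of index at least $j$. Since BFS edges change layer by at most one, this in turn forces $z_0$ to lie in $L_{j-2}\cup L_{j-1}\cup L_j$, a crucial improvement over weak $4$-reach which must account for the five layers $L_{j-4},\ldots,L_j$.

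The main work is then, for each of these three layers, to bound the number of isometric paths that can host a semi-weakly $4$-reachable vertex, and on each such path the number of such vertices. Following Almulhim--Kierstead, I would enumerate the contributing paths by the type of walk in $G$ from $y$ that certifies the reach (a single edge, a length-$2$ detour, a length-$3$ or $4$ bypass with the second-half vertices pinned to layers $\geq j$), and use the girth hypothesis to forbid configurations that would create short cycles between these paths, the BFS tree, and the sub-walk from $y$. Summing the contributions yields the bound $43$ for $g=3$, while each girth threshold $g\geq 10,11,13,17$ eliminates a specific family of configurations and produces the stepwise bounds $33$, $23$, $18$, $9$.

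The main obstacle will be the case analysis. For $g=3$ the densest configurations of overlapping isometric paths are permitted, and one must push the semi-weak constraint together with planarity simultaneously to obtain the constant $43$ rather than a weaker bound. A secondary difficulty is calibrating the girth thresholds: one must identify precisely which short-cycle structure disappears at each of the critical values $10,11,13,17$, and redo the count with the relevant configurations excluded. Extending the isometric-path decomposition beyond the triangulated case (Theorem~\ref{largegirthisometricpath}) is itself nontrivial, since the standard triangulation trick used in \cite{vandenHeuveletal2017} would destroy the girth hypothesis and must be replaced by a decomposition argument that works directly on the non-triangulated graph.
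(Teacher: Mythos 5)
Your proposal diverges from the paper's proof in a way that creates a genuine gap. The paper does \emph{not} order vertices by BFS layers; it peels off isometric paths one at a time (a "reduction" in the sense of Definition~\ref{def:reduction} for triangulated graphs, or a "neat reduction" for larger girth via Theorem~\ref{largegirthisometricpath}), and orders $V(G)$ by the peeling order and then along each path. The critical structural property this buys is \emph{bounded width}: after removing paths $P_0,\dots,P_i$, every component of the remainder is adjacent to at most two of the removed paths. This is exactly what lets the argument conclude (Claim~\ref{claim:allpaths}) that $\SubReach_4(G,\sigma,v)$ meets at most seven paths $P_k,P_h,P_j,P_i,P_f,P_g,P_q$ — the path containing $v$, its two bosses, and the bosses of those bosses. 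The rest of the counting is then local (Lemma~\ref{naivebound} gives at most $2\cdot 4 + 1 = 9$ reachable vertices per path, and properties of the reduction shave this down).

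Your BFS-layering scheme does not have this bounded-width property, and your proposal does not explain how to recover it. It is true, as you observe, that the semi-weak constraint pins $z_2, z_3, z_4$ to layers of index $\geq j$ and hence restricts $z_0$ to three layers. But ``three layers'' does not imply ``$O(1)$ paths'': a vertex $y \in L_j$ can be adjacent to arbitrarily many vertices of $L_{j-1}$, and nothing in your scheme bounds how many of the paths you carve out of $L_{j-1}$ (or of $L_j$, $L_{j-2}$) can be touched by a short walk from $y$. The phrase ``push the semi-weak constraint together with planarity simultaneously'' names the missing step without supplying it; planarity is doing concrete work in the actual proof precisely through the width-2 property of the reduction (it enters in Definition~\ref{def:reduction}(4) via facial boundaries, and in Theorem~\ref{largegirthisometricpath} via the outerface invariant), not through the layer structure. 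You would need either to establish a bounded-width quotient for your within-layer decomposition, or to switch to the peeling-based ordering.

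A secondary point: your worry that ``the standard triangulation trick would destroy the girth hypothesis'' is not actually an obstacle the paper has to navigate around. The paper simply applies the triangulation trick only when $g < 10$ (where no girth hypothesis is being exploited) and uses the neat reduction of Theorem~\ref{largegirthisometricpath} otherwise. So the $g = 3$ case in the paper is handled on triangulations, where the extra structure of Definition~\ref{def:reduction} (notably property (3), the interior-minimisation rule from Almulhim--Kierstead) delivers the refinements $|W_j| \le 8$, $|W_i| \le 8$, and the trade-off between $|W_h|$, $|W_f|$, $|W_g|$ that closes the count at $43$.
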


We follow the approach in \cite{ALMULHIM2022112631} to bound $\wcol_2(G)$ for $G$ planar. This approach builds on techniques developed in~\cite{vandenHeuveletal2017}, and is also used in \cite{almulhim2020thesis}.

As notation, we will use $d_{G}(x,y)$ to refer to the length of the shortest path in $G$ from $x$ to~$y$. A path $P$ in a graph $G$ is \emph{isometric} if there is no shorter path in $G$ between the endpoints of $P$. For a path $P$, let $vPx$ denote the subpath in $P$ from $v$ to $x$. We  say two vertex disjoint paths $P$ and $P'$ are \emph{adjacent} if there exists a vertex $v \in V(P)$ and $v' \in V(P')$ such that $vv' \in E(G)$. 
When two vertex disjoint subpaths $xPy$ and $uP'v$ of two paths $P,P'$ have their endpoints $y$ and $u$ adjacent, we denote by $xPyuP'v$ the path from $x$ to $v$ obtained by concatenating them.
Observe that isometric paths are induced subgraphs, and that they satisfy the following easy observation \cite{vandenHeuveletal2017}, of which we recall a short proof for convenience.

\begin{lemma}[Van den Heuvel et al. \cite{vandenHeuveletal2017}]
\label{naivebound}
Suppose $H$ is a graph, $P$ is an isometric path in $H$,  $x,y \in V(P)$, and $v\in V(H)$.
If $d_{H}(v,x) \leq r$ and $d_{H}(v,y) \leq r$, then $d_{P}(x,y) \leq 2r$. In particular, there are at most $2r+1$ vertices with distance at most $r$ from $v$ in $P$. 
\end{lemma}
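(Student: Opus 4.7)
The plan is to apply the triangle inequality in $H$ together with the isometry of $P$. First I would observe that $d_H(x,y) \leq d_H(x,v) + d_H(v,y) \leq 2r$ by the triangle inequality. Since $P$ is an isometric path in $H$ and $x,y \in V(P)$, the distance along $P$ equals the distance in $H$, that is $d_P(x,y) = d_H(x,y) \leq 2r$, which gives the first assertion.

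For the ``in particular'' clause, I would let $S$ denote the set of vertices of $P$ that are at $H$-distance at most $r$ from $v$. By what we just showed, every pair $x,y \in S$ satisfies $d_P(x,y) \leq 2r$. In a path, the set of vertices pairwise within distance $2r$ lies inside a subpath of length at most $2r$, and any such subpath has at most $2r+1$ vertices, so $|S| \leq 2r+1$.

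There is no real obstacle here; the only subtlety is noting that the isometry of $P$ is exactly what allows the transfer from the $H$-distance bound $2r$ to a $P$-distance bound, and that in a path a diameter-$2r$ set is automatically confined to a window of $2r+1$ consecutive vertices.
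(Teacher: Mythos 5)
Your proof is correct and takes essentially the same approach as the paper's. The only cosmetic difference is that you invoke directly the standard fact that subpaths of shortest paths are shortest (so $d_P(x,y)=d_H(x,y)$), while the paper reproves that fact inline via a contradiction (replacing $xPy$ by a shorter path through $v$ would shorten $P$ between its endpoints); the ``in particular'' clause is handled the same way in both.
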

\begin{proof}
Let $P$ be an $u-w$ isometric path in $H$ and $xPy$ be the subpath of $P$ that connects $x$
and $y$. We note that there is a $x-y$ walk $W_{x,y}$ in $H$ of length at most $2r$ that passes through the vertex $v$ 
and so there is a $x-y$ path $P'_{xy}$ in $H$ of length at most $2r$. Suppose $d_P(x,y)>2r$
in $P$, then we replace the subpath $xPy$ in $P$ by $P'_{xy}$ and get an $u-w$ walk of length smaller than the length of $P$ and so there is another $u-w$ path of length smaller than $P$ in $H$.
This contradicts the assumption that the path $P$ is an isometric path in $H$.  

Without loss of generality, let $x$ and $y$ be the minimum and the maximum-index vertices in $P$ respectively, such that 
the distances $d(x,v),d(v,y)$ are at most $2r$. If there are more than $2r+1$  
many vertices with distance at most $r$ from $v$ in $P$, then $d_P(x,y)$ is more than $2r$, a contradiction.    
\end{proof}

For a planar graph $G$ we will create a vertex ordering $\sigma$ in the following manner: we take an isometric path, put its vertices at the start of the ordering and remove it from the graph. Then, we pick a path which is isometric in the remaining graph, remove it and put it next in the ordering. We proceed inductively in this way, until no vertices are left to be ordered. This motivates the following definition from \cite{vandenHeuveletal2017}. We say a \emph{decomposition} of a graph $G$ is a set $\mathcal{H} = \{H_{1},\ldots,H_{s}\}$ for some integer $s \in \mathbb{N}$, where 
$H_{i}$ is an induced subgraph of $G$ (for $1\leq i\leq s$), $V(G) = \cup_{i=1}^{s} V(H_{i})$, and $V(H_{i}) \cap V(H_{j}) = \emptyset$ if $1\leq i< j\leq s$. We let $\mathcal{H}_{i} = G- \cup_{j=1}^{i-1} V(H_{j})$. 

An \emph{isometric-path decomposition} $\mathcal{P}=\{P_0, \dots, P_s\}$ is a decomposition where every subgraph $P_i$ is an isometric path in $\mathcal{P}_i$. 
 We will not be happy with just any isometric-path decomposition, but will need a particular type called ``reductions''. The reductions from \cite{vandenHeuveletal2017} help us bound the number of paths a vertex can reach, but to further bound $\semiweak_4(G)$ we make a more specific choice of a reduction, following \cite{ALMULHIM2022112631}.

\begin{definition}~\label{def:reduction}
A \emph{reduction} of a triangulated planar graph $G$ is an isometric-path decomposition $\mathcal{P} = \{P_{0},\ldots,P_{s}\}$ of $G$ such that:
\begin{enumerate}
\item{The isometric path $P_{0}$ consists of two vertices, and $P_{1}$ consists of a single vertex.}
\item{For all $i \in \{0,\ldots,s\}$, the path $P_{i}$ has endpoints $w_{i},w'_{i}$ (possibly these are equal), and for all $k \in  \{2,\ldots,s\}$, $P_{k}$ is adjacent to exactly two paths $P_{h}$ and $P_{j}$ with $h < j <k$, where there are some $v_{k},v_{k}' \in V(P_{h})$, $z_{k},z_{k}' \in V(P_{j})$, such that $v_{k}z_{k}w_{k}$ and $v_{k}'w_{k}'z_{k}'$ both bound faces in $G$. 
\item If there are two possible choices for $P_{k}$, then we choose the path which minimises the number of vertices in the interior of $v_{k}P_{h}v_{k}'w'_{k}P_{k}w_{k}$.}
\item{For every component $K$ of $\mathcal{P}_{k+1}$, the boundary of the face of $G[V(P_{0}) \cup \cdots \cup V(P_{k})]$ containing $K$ is of the form $D = vP_{h}v'z'P_{j}zv$ for some $h < j\leq k$. }
\end{enumerate}
\end{definition}

\begin{figure}[ht]
    \centering
    \includegraphics[width=.7\textwidth]{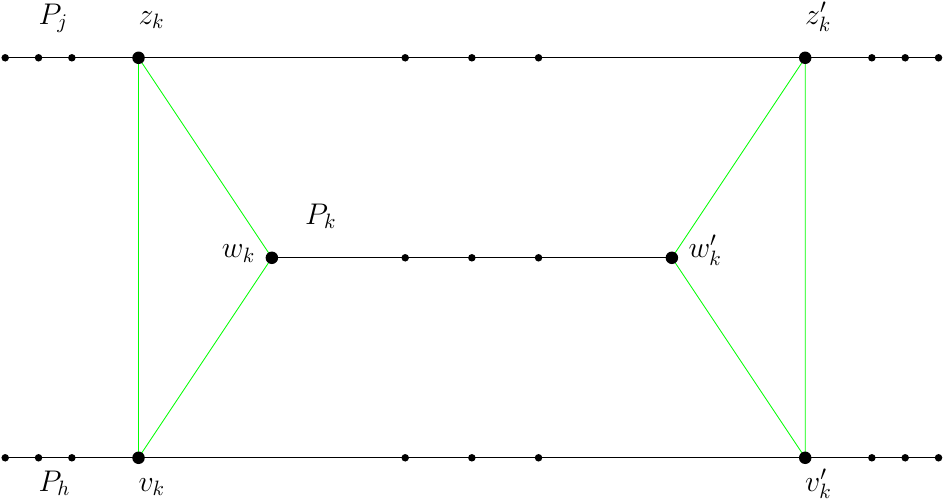}
    \caption{A path $P_k$ according to Definition~\ref{def:reduction}}
    \label{fig:isom}
\end{figure}

In \cite{vandenHeuveletal2017} it is shown that every triangulated planar graph has a reduction (although the choice in point (3) was not included in that paper, and was first considered in \cite{almulhim2020thesis,ALMULHIM2022112631}).

\begin{lemma}[Van den Heuvel et al. \cite{vandenHeuveletal2017}]\label{triangulated}
Every triangulated planar graph has a reduction.
\end{lemma}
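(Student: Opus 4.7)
My plan is to prove this by induction on $k$, explicitly constructing the paths $P_k$ one at a time, and verifying invariants (1)--(4) are maintained. Since $G$ is triangulated planar, I may assume without loss of generality that its outer face is bounded by a triangle $abc$. I take $P_0=ab$ (the edge, an isometric path on two vertices) and $P_1=c$, and the base case of the induction is immediate: invariants (1) and (2) hold by definition, and (4) holds because the unique inner face bounding the component $\mathcal{P}_2$ has boundary $aP_0b\,c\,P_1c\,a$, which fits the required form after interpreting a single-vertex path $P_j=\{c\}$ with $z=z'=c$.

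For the inductive step, assume that $P_0,\dots,P_{k-1}$ have been chosen satisfying (1)--(4), and fix any connected component $K$ of $\mathcal{P}_k$. By invariant (4), the face of $G[V(P_0)\cup\cdots\cup V(P_{k-1})]$ containing $K$ has boundary $D=vP_hv'z'P_jzv$ for some $h<j\le k-1$, and in particular $vz$ and $v'z'$ are edges of $G$. I set $v_k=v$, $z_k=z$, $v_k'=v'$, $z_k'=z'$ and define $w_k$ (resp.\ $w_k'$) to be the third vertex of the triangular face of $G$ incident to $vz$ (resp.\ $v'z'$) on the $K$-side of $D$; since $G$ is triangulated and these edges lie on the boundary of a face containing $K$, such triangular faces exist and their third vertices lie in $K$. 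Among all paths in $K$ from $w_k$ to $w_k'$ that are shortest in $K$, I choose $P_k$ to minimise the number of vertices in the interior of the closed region bounded by the walk $v_kP_hv_k'w_k'P_kw_k v_k$; this is exactly the minimisation required by (3).

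Next I verify the remaining invariants for this choice. The path $P_k$ is isometric in $\mathcal{P}_k$: since $K$ is a connected component of $\mathcal{P}_k$, distances between vertices of $K$ in $\mathcal{P}_k$ coincide with distances in $K$, and $P_k$ was taken as a shortest $w_k,w_k'$-path in $K$. Because $K$ lies in the face $D$, any previously placed path adjacent to $P_k$ must appear on $\partial D$, so $P_k$ is adjacent only to $P_h$ and $P_j$, via the face-triangles $v_kz_kw_k$ and $v_k'w_k'z_k'$; this establishes (2). Finally, I need to prove the preservation of (4): adding $P_k$ to the drawn subgraph subdivides the face $D$ into several faces, each of which must again be bounded by a fragment of some $P_{h'}$, an edge, a fragment of some $P_{j'}$, and an edge. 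Each new component of $\mathcal{P}_{k+1}$ sits inside one of these new faces, and its bounding face is formed either (i) by a subpath of $P_k$ together with a subpath of $P_h$ or $P_j$ joined by two edges of $G$, or (ii) by subpaths of $P_h$ and $P_j$ separated from $K$ by $P_k$ on one side and the old boundary on the other.

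The main obstacle is case (i), where an intermediate vertex of $P_k$ could have a neighbour on $P_h$ or $P_j$ that would not close the face as required. This is precisely where the choice in (3) is essential: if some intermediate vertex $u$ of $P_k$ had a neighbour on, say, $P_h$ strictly between $v_k$ and $v_k'$, then one could reroute $P_k$ through a triangular face incident to that edge to produce a competing candidate whose bounded region has strictly fewer interior vertices, contradicting the minimality. A careful planarity argument—using that $G$ is triangulated, that $P_k$ is induced (being isometric), and that each $w_k, w_k'$ lies on a triangular face with $v_k,z_k$ and $v_k',z_k'$ respectively—rules out all configurations of intermediate chords, and leaves exactly two new ``bigon'' faces of the desired form $vP_{h'}v'z'P_{j'}zv$ containing the new components of $\mathcal{P}_{k+1}$. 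This completes the inductive step and, hence, the construction of the reduction.
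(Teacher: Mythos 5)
The paper does not actually prove this lemma: it is quoted from \cite{vandenHeuveletal2017} (with condition (3) grafted on following Almulhim--Kierstead), so there is no in-paper argument to compare yours against line by line. Your construction is the standard one from that source: fix a triangular outer face, seed with $P_0=ab$ and $P_1=c$, and inductively fill each face $D=vP_hv'z'P_jzv$ with a shortest path of the enclosed component joining the apexes $w_k,w_k'$ of the two facial triangles of $G$ on the edges $vz$ and $v'z'$.

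Two points need attention. First, you assert that $w_k$ and $w_k'$ lie in $K$; a priori they only lie in \emph{some} component of $\mathcal{P}_{k+1}$ drawn inside $D$. You need the fact that in a triangulation the vertices of $G$ strictly inside a face of an induced subgraph span a connected graph (every inner face of the near-triangulation inside $D$ is a triangle with at least one vertex off $\partial D$, since $\partial D$ has no chords drawn inside $D$, and two inner faces sharing an edge share such a vertex, so connectivity of the dual transfers to the interior vertices). Without this, $w_k$ and $w_k'$ could sit in different components and your path $P_k$ would not exist. Second, your appeal to condition (3) to preserve invariant (4) is a misattribution. An edge from an interior vertex of $P_k$ to a vertex of $vP_hv'$ does not threaten (4): since $P_h$ and $P_k$ are induced in the relevant subgraphs (being isometric) and every chord inside the lens bounded by $vP_hv'$ and $w_kP_kw_k'$ must run between the two arcs, each face of that lens is automatically bounded by a subpath of $P_h$, an edge, a subpath of $P_k$, and an edge, which is exactly the required form with $j'=k$. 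Condition (3) is only a tie-breaking rule among already-valid choices of $P_k$; it plays no role in the existence of a reduction and is instead what the paper exploits later (in the proof of Theorem~\ref{boundingsemiweak}) to bound $|W_j|$. So your ``careful planarity argument ruling out intermediate chords'' is solving a non-problem, while the step that genuinely needs an argument, the connectivity of the interior of $D$, is the one left unproved.
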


We also need to introduce reductions for planar graphs of larger girth. We cannot prove as strong of a result, but we are able to keep the essential property.

\begin{definition}
A \emph{neat reduction} of a planar graph $G$ is an isometric-path decomposition $\mathcal{P} = \{P_{0},\ldots,P_{t}\}$ of $G$ satisfying that for $i\ge 1$ each path $P_{i}$ is adjacent to at least one and  at most two paths $P_{j}$ where $j <i$.
\end{definition}

\begin{thm}
\label{largegirthisometricpath}
If $G$ is a connected planar graph with girth  $g\ge 3$, then $G$ contains a neat reduction.
\end{thm}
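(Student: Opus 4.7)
The plan is to proceed by induction on $|V(G)|$, building the neat reduction iteratively while maintaining a planarity-based invariant. Fix a planar embedding of $G$. The base case $|V(G)|\le 1$ is trivial, so assume $|V(G)| \geq 2$. Begin the construction by setting $P_0$ to be a single vertex on the outer face of the chosen embedding.

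Throughout, the goal is to maintain the invariant $(\star)$: each connected component $K$ of $\mathcal{H}_k = G - \bigcup_{j<k}V(P_j)$ lies inside a single face $F_K$ of the planar subgraph $G_k := G[\bigcup_{j<k}V(P_j)]$, and the boundary of $F_K$ meets vertices from at most two of the paths $P_0, \ldots, P_{k-1}$. Note that $(\star)$ immediately yields the neat reduction condition: any edge from $V(K)$ to $V(G_k)$ must reach the boundary of $F_K$, so any isometric path $P_k \subseteq V(K)$ is automatically adjacent to at most two previous paths. Initially, $(\star)$ holds because each component of $\mathcal{H}_1$ lies in a face bounded only by $V(P_0)$.

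For the inductive step, pick any component $K$ of $\mathcal{H}_k$, let $P_h, P_j$ denote the (at most two) previous paths bounding $F_K$, and choose $P_k$ as an isometric path in $\mathcal{H}_k[V(K)]$ with at least one endpoint adjacent to a previous path; this guarantees $P_k$ is adjacent to at least one previous path (using connectivity of $G$). When $F_K$ is bounded by a single path $P_h$, any such $P_k$ preserves $(\star)$: the resulting sub-faces of $F_K$ are bounded only by pieces of $P_h$ and $P_k$, hence each enclosed component is adjacent to at most two previous paths. When $F_K$ is bounded by two paths $P_h$ and $P_j$, the plan is to choose $P_k$ as an isometric path in $K$ from a vertex adjacent to $V(P_h)$ to a vertex adjacent to $V(P_j)$; extended by its interface edges, $P_k$ forms a chord across $F_K$ that, by the Jordan curve theorem, subdivides the face into sub-faces, and the planar argument selects $P_k$ so that each resulting sub-face is bounded by at most two of $\{P_h, P_j, P_k\}$.

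The main obstacle is handling components $K$ that contain ``junction'' vertices simultaneously adjacent to both $P_h$ and $P_j$, since such junctions may persist in a sub-component and cause it to be adjacent to all three of $\{P_h, P_j, P_k\}$, violating $(\star)$. The plan to overcome this is a careful case analysis using the cyclic order of interface vertices on $\partial F_K$ guaranteed by planarity: either $P_k$ is chosen to include all junction vertices (exploiting that they can be shown to lie along a shortest path between the two interface arcs), or they are processed iteratively as singleton isometric paths $P_k$ so that after finitely many steps no problematic junctions remain. Each such step strictly decreases $|\mathcal{H}_k|$, so by induction on $|V(G)|$ the process terminates with $\mathcal{H}_k = \emptyset$, yielding the desired neat reduction.
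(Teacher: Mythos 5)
Your overall framework (iteratively extracting isometric paths while maintaining the invariant that every remaining component attaches to at most two earlier paths) matches the paper's, and your treatment of the one-boss case is fine. The gap is in the two-boss case. When the face $F_K$ is bounded by $P_h$ and $P_j$ and you take $P_k$ to be a ``chord'' from a vertex adjacent to $P_h$ to a vertex adjacent to $P_j$, the closed curve formed by $P_k$, its two interface edges, and the boundary of $F_K$ splits $F_K$ into two regions \emph{each of whose boundaries contains a sub-arc of $P_h$, a sub-arc of $P_j$, and $P_k$}. A component of $K-V(P_k)$ lying in such a region can have one vertex adjacent to $P_h$, another adjacent to $P_j$, and another adjacent to $P_k$ --- three distinct earlier paths --- without containing any ``junction'' vertex adjacent to both bosses simultaneously. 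So your diagnosis of the obstacle misses the actual failure mode, and neither proposed remedy addresses it: junction vertices need not lie on a common isometric path, and peeling them off as singleton paths only creates further components each of which can still see three earlier paths. Nor can the chord be rescued by an extremal choice of its endpoints: whichever pair of attachment vertices you connect, one of the two resulting regions contains all the remaining attachments to both bosses.

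The paper's resolution is structurally different. When $P_h$ has at least two attachment vertices of $K$, \emph{both} endpoints of the new path are anchored to $P_h$, at attachments $x,y$ chosen with $d_{P_h}(x,y)$ maximum and with the region enclosed by $xP_hyy'Px'$ maximized. The resulting closed curve separates $F_K$ into an inside, where leftover components can only see $P_h$ and the new path, and an outside, where the two extremal choices force leftover components to see only $P_j$ and the new path. The chord-type step is used only when each boss has exactly one attachment vertex of $K$; then both attachments are absorbed into the new path and every leftover component sees only that path. You would need to replace your two-boss step with an argument of this kind.
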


\begin{proof}
Let $G$ be a planar graph with girth exactly $g$. We build our isometric path decomposition iteratively. We will maintain the following stronger property: If we have obtained paths $P_{0},\ldots,P_{\ell}$, $\ell \geq 1$, of our decomposition, and $K$ is a component of $\mathcal{P}_{\ell+1}$, then there is at least one and at most two of these paths that contain vertices that are adjacent, in $G$, to vertices in (the outerface of) $K$. 
Let $C$ be the boundary of the outerface of $G$.

Let $x,y$ be any pair of adjacent vertices in $V(C)$ and let $P_{0} = x,y$. Let $x'$ be a vertex adjacent to $x$ in $C$ and if it exists, let $y'$ be a vertex adjacent to $x'$ in $C$, and otherwise have $y' = x'$. Then let $P_{1} = x',y'$.

Now suppose we have found paths $P_{0},\ldots,P_{i}$, $i \geq 1$, such that for all $\ell \in \{1,\ldots,i\}$, we have that $P_{\ell}$ is isometric in $\mathcal{P}_{\ell}$, and is adjacent to at most two paths $P_{q}$ where $q < \ell$. Let $K$ be a component of $\mathcal{P}_{i+1}$ (if no such component exists, we are done). By our assumption we know that the facial boundary of $K$ is adjacent to at most two paths in $\{P_{0},\ldots,P_{i}\}$. We consider cases.

First suppose that $K$ is adjacent to two distinct paths, $P_{h}$ and $P_{j}$. We first consider the case where $P_{h}$ is incident to at least two vertices of $K$. Let $x,y$ be two vertices in $P_{h}$ that are incident to vertices in $K$ where $d_{P_{h}}(x,y)$ is maximized (note, it is possible that $x =y$). Let $x',y'$ be vertices in $K$ that are incident to $x,y$ respectively, and let $P$ be any isometric path between $x'$ and $y'$ in $K$, and further pick $x',y'$ such that the interior of $xP_{h}yy'Px'$ is maximized. We claim that letting $P_{i+1} = P$ maintains the desired property. Note that $P$ is isometric in $\mathcal{P}_{i+1}$ by construction and that it is adjacent at least to $P_h$. Further note that any component in $\mathcal{P}_{i+1}$ that was not adjacent to both $P_{h}$ or $P_{j}$ is still adjacent to at most two paths in $\{P_{0},\ldots,P_{i+1}\}$. If a component $K'$ in $\mathcal{P}_{i+1}$ was adjacent to both $P_{h}$ and $P_{j}$, then $K'$ is distinct from $K$ and $K'$ is still only adjacent to $P_{h}$ or $P_{j}$ (if $K'$ had an edge to $P$, this would imply $K' = K$). Lastly, if $K'$ is a new component created by the deletion of $P$, if $K'$ lies in the interior of $xP_{h}yy'Px'$, then it is adjacent to at most $P$ and $P_{h}$. If $K'$ lies on the exterior, it is adjacent to at most $P_{j}$ and $P$, as if such a component was adjacent to $P_{h}$, it would contradict that we picked $x'$ and $y'$ such that the interior of $xP_{h}yy'Px'$ is maximized or that $x,y$ were picked so as to maximise $d_{P_{h}}(x,y)$. 

Therefore by symmetry we may assume that $P_{h}$ is adjacent to exactly one vertex of $K$, and $P_{j}$ is adjacent to exactly one vertex of $K$. Let $x$ be the vertex in $K$ incident to a vertex in $P_{h}$, and $y$ be the vertex in $K$ incident to a vertex in $P_{j}$. Note $x$ may equal $y$. Let $P$ be any isometric path in $K$ from $x$ to $y$. Let $P_{i+1} = P$. In this case, again the condition holds as any new component created is adjacent to at least $P$, and any previous component is adjacent to the same paths as before. 

Therefore we may assume that $K$ is adjacent to at most one path $P_{h}$. Suppose first that $P_{h}$ is adjacent to two vertices $x,y$ in $K$. Let $P$ be any isometric path between $x$ and $y$. Let $P_{i+1} = P$. Observing that any new component is adjacent to at most $P$ and $P_{h}$ gives us the desired properties. Finally, if $P_{h}$ is adjacent to exactly one vertex $x$ in $K$, then pick any vertex $y \in V(K)$ (possibly $x=y$) and let $P$ be an isometric $(x,y)$-path in $K$. Then setting $P_{i+1} = P$, using the same reasoning as in the previous cases, the desired condition is satisfied, completing the proof. 
\end{proof}

With these new reductions we are ready to prove Theorem \ref{boundingsemiweak}. 

\begin{proof}[Proof of Theorem \ref{boundingsemiweak}]
Let $G$ be any planar graph with girth $r$ for some integer $r$. We may assume that $G$ is connected. If $r <10$, then as adding edges to $G$  cannot decrease $\semiweak_4(G)$, so we may assume $G$ is a triangulation, and thus by Lemma~\ref{triangulated}, $G$ has a reduction $\mathcal{P} = \{P_{0},\ldots,P_{s}\}$. Otherwise, we take $\mathcal{P} = \{P_{0},\ldots,P_{s}\}$ to be a neat reduction of $G$ guaranteed by Theorem \ref{largegirthisometricpath}. 

Following the notation of \cite{ALMULHIM2022112631}, if for $i\ge 2$, $P_{i}$ is adjacent to $P_{h}$ and $P_{j}$ for $h < j < i$, then we say $P_{h}$ and $P_{j}$ are the \emph{bosses} of $P_{i}$, and in particular, $P_{h}$ is the \emph{manager} of $P_{i}$, and $P_{j}$ is the \emph{foreman} of $P_{i}$. If $P_{i}$ is adjacent to only one path $P_{h}$, then we say that $P_{h}$ is a boss and manager of $P_{i}$.

We construct an ordering $\sigma$ on $V(G)$ by first saying that if $v \in V(P_{i})$ and $v' \in V(P_{j})$, where $i < j$, then $v <_{\sigma} v'$. Further if $P_{i} = w^{0}w^{1}\dots w^{n}$ we let $w^{j} <_{\sigma} w^{k}$ if $j < k$. 

To prove Theorem \ref{boundingsemiweak}, it suffices to show that $\semiweak_4(G,\sigma)$ is at most $43$ (or smaller if the girth of $G$ is larger than 9). Fix some path $P_{k} \in \mathcal{P}$, and let $v \in V(P_{k})$. We will prove that $|\SubReach_{4}(G,\sigma,v)|$ satisfies the desired bound. If $P_{k} = P_{0}$, then in the case of a reduction $|V(P_{0})| =2$, there is at most one vertex $x$ with $x <_{\sigma} v$, and thus $|\SubReach_{4}(G,\sigma,v)| \leq 2$. Otherwise, $|\SubReach_{4}(G, \sigma,v)| \leq 5$, as all vertices that can be reached are on a path.  If $P_{k} = P_{1}$, then in the case of a reduction as $|V(P_{1})| = 1$, we have that there are at most two vertices $x$ such that $x <_{\sigma} v$, so $|\SubReach_{4}(G,\sigma,v)| \leq 3$. Otherwise, 
the girth is at least $4$, and $|\SubReach_{4}(G,\sigma,v)| \leq 9$ as a vertex can only reach at most four vertices on each of the paths, as well as itself.  Therefore we assume that $k \geq 3$.  Let $P_{j}$ and $P_{h}$ be the bosses of $P_{k}$ ( $P_{j} = P_{h}$ is possible). If $P_k$ does have two bosses, without loss of generality, we say that $P_{h}$ is the manager of $P_{k}$, and $P_{j}$ is the foreman. Thus, in this case and if $G$ is triangulated, by property (3) of Definition~\ref{def:reduction} $P_{h}$ is a boss of $P_{j}$. Let $P_{i}$ be the other boss of $P_{j}$ in the triangulated case,  in the larger girth case, let $P_{q}$ be a boss of $P_{j}$, and let $P_{g}$ and $P_{f}$ be the two bosses of $P_{h}$. Note these paths need not all be distinct. In particular, it is possible that $P_{f} = P_{i}$, or $P_{g} = P_{i}$. 
For $a \in \{k,h,j,f,g,i,q\}$, we let $W_{a} = \{V(P_{a}) \cap \SubReach_{4}(G,\sigma,v)\}$. We encourage the reader to use Figure \ref{pathpic} as a guide for the following claims.
\begin{figure}
\centering
\includegraphics[scale = 0.5]{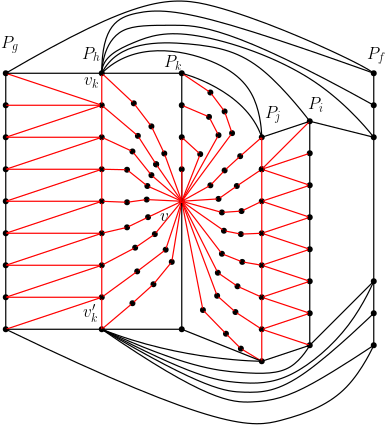}
\caption{A possible set of paths $P_{g},P_{h},P_{k},P_{j},P_{i},P_{f}$ for a triangulated graph (some edges omitted) and a vertex $v$. In this picture, we have $|W_{g}| = |W_{h}| = 9$, $|W_{k}| = 5$, $|W_{j}| = 8$, $|W_{i}| = 8$, and $|W_{f}| = 0$. Edges in red are part of a path that shows some vertex $u$ is $4$-semi-weakly reachable from $v$ (in some cases, more than one such path exists).}
\label{pathpic}
\end{figure}
\begin{claim}\label{claim:allpaths}
We have that $\SubReach_{4}(G,\sigma,v) \subseteq W_{i} \cup W_{k} \cup W_{h} \cup W_{j} \cup W_{f} \cup W_{g} \cup W_{q}$.
\end{claim}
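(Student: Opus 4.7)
The plan is a case analysis on the length $s$ of a path $P^* = z_0 z_1 \cdots z_s$ witnessing $u \in \SubReach_4(G,\sigma,v)$, where $z_0 = u$, $z_s = v$, $s \le 4$, $u$ is the minimum vertex of $P^*$ under $\sigma$, and $z_i \ge_\sigma v$ for every $i \ge \lceil s/2 \rceil$. The latter condition forces each such $z_i$ to lie on $P_k$ after $v$ or on a path $P_m$ with $m > k$, while the minimality of $u$ forces $u$ to lie on a path of index at most $k$, with $u \in W_k$ whenever $u$ itself lies on $P_k$.

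For $s \le 1$, the argument is immediate: $u$ lies on $P_k$ or on a path adjacent to $P_k$, and minimality restricts the latter to a boss of $P_k$, namely $P_h$ or $P_j$. For $s \in \{2,3,4\}$, I would split $P^*$ at the ``peak'' vertex $z_{\lceil s/2 \rceil}$, which lies in the upper region, and analyse the descent of length at most two from the peak down to $u$. By minimality of $u$, this descent must terminate on a path of index at most $k$ that is either the path containing the peak or a boss of it. If the peak lies on $P_k$, a one-step descent can only reach $W_k \cup W_h \cup W_j$, while a two-step descent additionally picks up the bosses of $P_h$ (namely $P_g, P_f$) and the bosses of $P_j$ (namely $P_h$ together with $P_i$ in the triangulated case, or $P_q$ in the neat-reduction case).

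The hard part will be the case in which the peak $z_{\lceil s/2 \rceil}$, and possibly $z_{\lceil s/2 \rceil + 1}$ when $s = 4$, lies on a path $P_m$ with $m > k$. Then $P_k$ is one boss of $P_m$, and the other boss must be identified. In the triangulated case, property~(4) of Definition~\ref{def:reduction} forces the region of $\mathcal{P}_m$ containing $P_m$ to be bounded by two earlier paths, one of which is $P_k$; chaining this with the choice rule in property~(3), which guarantees that $P_h$ is a boss of $P_j$, confines the other boss of $P_m$ to $P_h, P_j$ or to their own bosses. In the neat-reduction case, an analogous and simpler argument uses only that each path has at most two predecessor-adjacent paths. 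Handling $s = 4$ with a two-step detour through higher-indexed paths requires iterating this boss-identification twice, and is the most delicate ingredient; in every sub-case the descent target ends up in $W_g \cup W_f \cup W_h \cup W_j \cup W_k \cup W_i \cup W_q$, which yields the claim.
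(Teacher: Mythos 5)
Your overall strategy — following the witness path down from the region above $v$ and bounding where it can land via boss/face structure — is the same as the paper's, and your accounting of bosses-of-bosses (giving $P_g,P_f,P_i,P_q$) matches. However, two details are off, and one of them is a genuine gap rather than a typo.

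First, the ``upper region'' condition in the definition of $\SubReach_4$ reads $z_i \geq_\sigma v$ for $i \geq \lceil 4/2\rceil = 2$, not for $i \geq \lceil s/2\rceil$. For $s=2$ this difference matters: $z_1$ may lie strictly below $v$, so $z_{\lceil s/2\rceil} = z_1$ need not be in the upper region at all. The paper avoids any case split on $s$; it simply distinguishes whether one or two vertices of the witness path lie below $v$.

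Second, and more substantively, your claim that when the peak lies on a path $P_m$ with $m>k$, ``$P_k$ is one boss of $P_m$,'' is false in the case you yourself flag as delicate: for $s=4$ the upper segment $z_2z_3z_4=v$ can pass through two distinct later paths $P_m$ and $P_{m'}$ (with $k < m' < m$, say), and then $P_m$ need not be adjacent to $P_k$ at all, so $P_k$ is not a boss of $P_m$. Iterating boss-identification along $P_k \to P_{m'}\to P_m$ also does not close the argument, since at each step the ``other'' boss is a priori arbitrary. What actually makes the argument go through — and what the paper's one-line justification ``as $P_h$ and $P_j$ are the bosses of $P_k$'' silently relies on — is a \emph{component}-level, not \emph{path}-level, statement: the upper segment of the witness path lies entirely inside the component of $\mathcal{P}_k$ containing $P_k$ (and, once $P_k$ is stripped away, inside a single component of $\mathcal{P}_{k+1}$ that is adjacent to $P_k$), and property~(4) of Definition~\ref{def:reduction} (or, for neat reductions, the invariant maintained in the proof of Theorem~\ref{largegirthisometricpath}) says that such a component has its face boundary on exactly two earlier paths, one of which is $P_k$ and the other of which is $P_h$ or $P_j$. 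This is what forces the first descent step below $v$ to land in $P_k\cup P_h\cup P_j$, independently of how many later paths $P_m,P_{m'}$ the upper segment visits. Replacing your per-path boss chase with this component argument would repair the proof and would also make the neat-reduction case go through, since ``each path has at most two predecessor-adjacent paths'' alone is not enough — you need the analogous bound on components, which is exactly the invariant the construction of the neat reduction maintains.
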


\begin{claimproof}
Suppose that $u \in \SubReach_{4}(G,\sigma,v)$ and let $Q$ be a $(v,u)$-path which witness this. First suppose that in $Q$, the only vertex $x$ such that $x \leq_{\sigma} v$, is $u$. As $P_{h}$ and $P_{j}$ are the bosses of $P_{k}$, it follows that $u \in W_{k} \cup W_{h} \cup P_{j}$. Thus we only need to consider the case where there are two vertices $x_{1},x_{2} \in V(Q)$ that are smaller than $v$ under $\sigma$. In this case, by the same reasoning as above, we may assume that $x_{1} \in V(P_{h}) \cup V(P_{j}) \cup V(P_{k})$ and that $u = x_{2}$. Thus if $u$ does not lie in one of $P_{h}$, $P_{j}$,  or $P_{k}$, it lies in a boss of $P_{h}$, $P_{k}$ or $P_{j}$. These are precisely $P_{g},P_{f},P_{q}$ and $P_{i}$ by definition, and thus the claim follows. 
\end{claimproof}

For the rest of the proof, we bound the sizes of the sets $W_{k},W_{h},W_{j},W_{f},W_{g},W_{q}$ and $W_{i}$. 

\begin{claim}
\label{naiveboundclaim}
For any $a \in \{k,h,j,f,g,i,q\}$, we have $|W_{a}| \leq 9$
\end{claim}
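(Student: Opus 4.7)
The plan is to apply Lemma~\ref{naivebound} inside the graph $\mathcal{P}_a$, exploiting the fact that $P_a$ is isometric in $\mathcal{P}_a$. The point of the ``$u$ is the minimum of the witness path'' clause in the definition of $\SubReach$ is precisely that it confines the witness path to $\mathcal{P}_a$, so the length-$4$ path in $G$ is also a length-$4$ path in $\mathcal{P}_a$.

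More concretely, fix $a\in\{k,h,j,f,g,i,q\}$ and note that $a\le k$, so $v\in V(P_k)\subseteq V(\mathcal{P}_a)$. Let $u\in W_a$; then there is a witness path $Q=z_0z_1\cdots z_s$ in $G$ with $z_0=u$, $z_s=v$, $s\le 4$, and $u\le_\sigma z_i$ for all $i$. By the construction of $\sigma$, the condition $u\le_\sigma z_i$ with $u\in V(P_a)$ forces $z_i\in V(P_b)$ for some $b\ge a$, i.e.\ $z_i\in V(\mathcal{P}_a)$. Hence $Q$ lies entirely in $\mathcal{P}_a$, so $d_{\mathcal{P}_a}(u,v)\le 4$.

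Since $P_a$ is an isometric path in $\mathcal{P}_a$ (by the definition of an isometric-path decomposition) and every vertex of $W_a$ sits in $V(P_a)$ at distance at most $4$ from $v$ in $\mathcal{P}_a$, Lemma~\ref{naivebound} (applied to $H=\mathcal{P}_a$, $P=P_a$, and $r=4$) yields $|W_a|\le 2\cdot 4+1=9$, which is the desired bound.

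There is no real obstacle here: the argument is a short two-step reduction in which the only subtle point is identifying the correct ambient graph. The condition ``$u$ minimal in the witness path'' in the definition of $\semiweak$ is exactly tailored so that the length-$4$ path can be carried out inside $\mathcal{P}_a$, which is where $P_a$ is isometric; once that observation is made, the bound $2r+1=9$ falls out of Lemma~\ref{naivebound} uniformly for all seven values of $a$, with no case analysis needed.
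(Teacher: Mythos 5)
Your proof is correct and follows essentially the same route as the paper's (very terse) proof: it applies Lemma~\ref{naivebound} with $H=\mathcal{P}_a$, $P=P_a$, and $r=4$. The one thing you make explicit that the paper leaves implicit --- namely that the ``$u$ is minimum in the witness path'' clause in the definition of $\SubReach$, combined with $u\in V(P_a)$, confines the witness path to $\mathcal{P}_a$ --- is indeed the necessary justification for applying the lemma with that ambient graph.
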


\begin{claimproof}
Note $P_{a}$ is isometric in $\mathcal{P}_{a}$, and as $v \in V(\mathcal{P}_{a})$ by applying Lemma \ref{naivebound} with $r =4$, we get that $|W_{a}| \leq 9$. 
\end{claimproof}

\begin{claim}
\label{Pkbound}
We have that $|W_{k}| \leq 5$. Further if the girth of $G$ is at least $8$, then $|W_{k}| \leq 4$, and if the girth of $G$ is at least $9$, then $|W_{k}| \leq 3$. 
\end{claim}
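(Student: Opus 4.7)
The plan is to combine the isometric-path property of $P_k$ with the semi-weak reachability condition to first bound $|W_k|$ by $5$, and then to use the girth hypotheses to rule out the largest possible distances along $P_k$.

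For any $u \in W_k$, let $Q = z_0 z_1 \dots z_s$ be a witness path, with $z_0 = u$, $z_s = v$ and $s \leq 4$. I would first observe that $Q$ lies entirely in $\mathcal{P}_k$: since $u$ is the minimum of $Q$ under $\sigma$, every $z_i$ satisfies $z_i \geq_\sigma u$; since $u \in V(P_k)$ and $\sigma$ places $V(P_0) \cup \dots \cup V(P_{k-1})$ before $V(P_k)$, this forces $z_i \in \mathcal{P}_k$ for every $i$. Using the isometricness of $P_k$ in $\mathcal{P}_k$ (as in the proof of Lemma~\ref{naivebound}), this gives $d_{P_k}(v, u) = d_{\mathcal{P}_k}(v, u) \leq |Q| \leq 4$. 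Since $u \leq_\sigma v$ on $P_k$, the vertex $u$ is either $v$ itself or one of the four vertices of $P_k$ immediately preceding $v$, so $|W_k| \leq 5$.

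For the girth-dependent bounds, set $d := d_{P_k}(v, u)$ and $\ell := s$, and suppose $d \geq 3$. I would then show that $Q \neq P_k[v, u]$: since $\lceil 4/2 \rceil = 2$, the semi-weak condition forces $z_2 \geq_\sigma v$, whereas if $Q = P_k[v, u]$ then $z_2$ would be an internal vertex of $P_k[v, u]$ strictly between $u$ and $v$ on $P_k$, hence $z_2 <_\sigma v$, a contradiction. From two distinct $v,u$-paths of lengths $d$ and $\ell$, a standard symmetric-difference argument in the cycle space yields a cycle of $G$ of length at most $d + \ell$.

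It then remains to plug in the numbers: isometricness of $P_k$ in $\mathcal{P}_k$ forces $\ell \geq d$, so for $d = 3$ one has $\ell \in \{3, 4\}$ and the cycle has length at most $7$, while for $d = 4$ one has $\ell = 4$ and the cycle has length at most $8$. Thus girth $\geq 8$ rules out $d = 3$, leaving $d \in \{0, 1, 2, 4\}$ and $|W_k| \leq 4$; and girth $\geq 9$ further rules out $d = 4$, leaving $d \in \{0, 1, 2\}$ and $|W_k| \leq 3$. The main (and essentially only) subtlety is verifying that $z_2$ lies strictly between $u$ and $v$ on $P_k$ in the case $Q = P_k[v, u]$, which is immediate once one notes that $z_2$ is at distance $s - 2 \geq 1$ from $v$ along $Q$ whenever $s \geq 3$.
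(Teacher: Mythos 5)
Your proof is correct and follows essentially the same route as the paper: isometricity of $P_k$ in $\mathcal{P}_k$ together with the ordering along $P_k$ confines $W_k$ to $v$ and its at most four predecessors, and the girth hypotheses exclude the predecessors at distance $3$ (resp.\ $3$ and $4$) because a witness path, being forced by the condition $z_2\geq_\sigma v$ to differ from the subpath of $P_k$, closes a cycle of length at most $7$ (resp.\ $8$). Your write-up is in fact slightly more careful than the paper's in making explicit why the witness path cannot coincide with the subpath of $P_k$.
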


\begin{claimproof}
Let $P_{k}=w^0w^1\dots w^n$ where $v=w^\ell$. Our definition of $\sigma$ implies that $v$ can reach itself, as well as possibly $w^{\ell-1},w^{\ell-2},w^{\ell-3}$ and $w^{\ell-4}$ (if they exist). These are the only such vertices $v$ can reach in $P_{k}$ by paths of length at most $4$ in $\mathcal{P}_{k}$, as $P_{k}$ is isometric in $\mathcal{P}_{k}$, and $v < w^{q}$ for all $q > \ell$ in $P_{k}$. If $G$ has girth at least $8$, then any path witnessing that $v$ semi-weakly $4$-reaches $w^{k-3}$ creates a cycle of length at most $7$, implying that $|W_{k}| \leq 4$. Similarly, if $G$ has girth at least $9$, then any path witnessing that $v$ semi-weakly $4$-reaches $w^{\ell-4}$ or $w^{\ell-3}$ creates a cycle of length at most $8$, implying that $|W_{k}| \leq 3$. 
\end{claimproof}

\begin{claim}
\label{distinctness}
If $G$ is triangulated and there exists a pair $a,b \in \{k,h,j,f,g,i\}$, such that $a \neq b$, but $P_{a} = P_{b}$, then $|\SubReach_{4}(G,\sigma,v_{k})| \leq 41$.
\end{claim}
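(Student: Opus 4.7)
The plan is to combine Claim~\ref{claim:allpaths}, which gives the containment $\SubReach_{4}(G,\sigma,v) \subseteq W_{k} \cup W_{h} \cup W_{j} \cup W_{f} \cup W_{g} \cup W_{i}$ in the triangulated case (no $W_{q}$ needed), with the individual size bounds $|W_{k}| \le 5$ from Claim~\ref{Pkbound} and $|W_{a}| \le 9$ for $a \in \{h,j,f,g,i\}$ from Claim~\ref{naiveboundclaim}. A naive application of these bounds gives only $5 + 5 \cdot 9 = 50$, so the task is to save $9$ by exploiting the hypothesis that two of the six labels point to the same path.

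First I would verify that $P_{k}$ cannot coincide with any of $P_{h}, P_{j}, P_{f}, P_{g}, P_{i}$. This is because the reduction $\mathcal{P}$ is a vertex-disjoint decomposition of $V(G)$, and the boss relation forces a strict ordering of indices: $h, j < k$, $f, g < h$, and $i < j$. I would also record the other forced inequalities: $P_{h} \neq P_{j}$ (they are two distinct bosses of $P_{k}$), $P_{f} \neq P_{g}$ (two distinct bosses of $P_{h}$), and $P_{h} \neq P_{i}$ (since $P_{i}$ is defined as the \emph{other} boss of $P_{j}$). Since $f,g < h < j$ we also have $P_{j} \neq P_{f}, P_{g}$, and since $i < j$ we have $P_{i} \neq P_{j}$. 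Consequently, any coincidence $P_{a} = P_{b}$ with $a \neq b$ in $\{k,h,j,f,g,i\}$ must lie within $\{f,g,i\}$, and concretely must be one of $P_{f} = P_{i}$ or $P_{g} = P_{i}$, matching the possibilities flagged in the paragraph just before the claim.

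Once the coincidence is in place, two of the sets $W_{f}, W_{g}, W_{i}$ are literally the same set, so the union $W_{k} \cup W_{h} \cup W_{j} \cup W_{f} \cup W_{g} \cup W_{i}$ is really a union of at most five distinct sets, one of which is $W_{k}$ (of size at most $5$) and the remaining four of size at most $9$ each. This gives
\[
|\SubReach_{4}(G,\sigma,v)| \le |W_{k}| + 4 \cdot 9 \le 5 + 36 = 41,
\]
which is the desired bound.

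The proof is essentially a bookkeeping argument after the bounds of the previous claims are in hand; no new geometric or topological input is needed. The only part that requires care is justifying the index inequalities cleanly, in particular using the manager/foreman convention to force $h < j$ (and similarly for the bosses of $P_{h}$) so that $P_{k}$ is ruled out from any coincidence. I expect this to be the main, and essentially only, obstacle.
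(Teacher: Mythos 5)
Your proof is correct and follows essentially the same route as the paper's: observe that the hypothesis collapses the six paths to at most five distinct ones, apply the containment of Claim~\ref{claim:allpaths} (with $W_q=W_h$ since $q=h$ in the triangulated case), and count $|W_k|\le 5$ plus at most four further sets of size at most $9$ to get $5+4\cdot 9=41$. Your extra index analysis pinning the coincidence down to $P_f=P_i$ or $P_g=P_i$ is sound but not needed for the bound; the paper skips it.
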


\begin{claimproof}
If $G$ is triangulated we have $h=q$, and if we also have $P_{a} = P_{b}$ and $a \neq b$ for some pair $a,b \in \{k,h,j,f,g,i\}$, then the set $\{P_{k},P_{h},P_{j},P_{f},P_{g},P_{i}\}$ has at most five elements. Thus it follow from Claim \ref{claim:allpaths}, Lemma \ref{naivebound} and Claim \ref{Pkbound} that $|\SubReach_{4}(G,\sigma,v_{k})| \leq  4(9) + 5 = 41$.
\end{claimproof}

If $G$ is triangulated our goal is to show that $|\SubReach_{4}(G,\sigma,v)| \leq 43$, and by Claim \ref{distinctness} we may assume that for all $a,b \in \{k,h,j,f,g,i\}$, we have that $P_{a} = P_{b}$ if and only if $a = b$.

\begin{claim}\label{claim:hboss}
If $G$ is triangulated we have that $P_h$ is a boss of $P_i$, and thus $P_{h}$ is the manager of $P_{j}$. 
\end{claim}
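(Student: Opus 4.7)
The plan is to use property~(4) of Definition~\ref{def:reduction} to understand the face that accommodates $P_j$ just before it is inserted. The component of $\mathcal{P}_j$ containing $P_j$ sits in a face of $G[V(P_0)\cup\cdots\cup V(P_{j-1})]$ whose boundary has the form $v\,P_a\,v'z'\,P_b\,z\,v$ for some $a<b<j$. Since by property~(2), $P_j$ has exactly two bosses, and the only earlier paths adjacent to the component containing $P_j$ are $P_a$ and $P_b$, we get $\{P_a,P_b\}=\{P_h,P_i\}$ (using the distinctness guaranteed by Claim~\ref{distinctness} so that $P_h\ne P_i$). In particular $P_h$ and $P_i$ are adjacent in $G$ via the edges $v'z'$ and $zv$, so all that remains to show that $P_h$ is a boss of $P_i$ is to verify $h<i$.

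The key observation for this is the following: whenever a face of some intermediate graph $G[V(P_0)\cup\cdots\cup V(P_t)]$ is bounded by arcs of two distinct earlier paths $P_r$ and $P_s$ with $r<s$, the path $P_r$ must be a boss of $P_s$. Indeed, such a ``two-path'' face only comes into existence the moment the newer path is inserted: by property~(2), $P_s$ is added inside a face whose bounding paths are its two bosses, and $P_s$ splits that face into smaller faces, each then bounded by $P_s$ together with one of its bosses. Any later insertion into such a smaller face would replace $P_s$ or its older companion by an even newer path on the boundary, so if at some step $t \ge s$ we still see a face bounded by $P_s$ and some older $P_r$, then $P_r$ was on the boundary already at the moment $P_s$ was inserted, i.e., $P_r$ is a boss of $P_s$.

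Applying this observation to our face bounded by $P_h$ and $P_i$, we split into two cases. If $h<i$, then the observation immediately gives that $P_h$ is a boss of $P_i$, as desired. If instead $i<h$, the observation forces $P_i$ to be a boss of $P_h$; but the two bosses of $P_h$ are $P_g$ and $P_f$, and by Claim~\ref{distinctness} we have $P_i\notin\{P_g,P_f\}$, a contradiction. Hence $h<i$, so $P_h$ is a boss of $P_i$. For the ``thus'' part of the claim, combining $h<i$ with $i<j$ (since $P_i$ is a boss of $P_j$) yields $h<i<j$, so among the two bosses $\{P_h,P_i\}$ of $P_j$ the path $P_h$ has the smaller index and is by definition the manager of $P_j$. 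The only subtle step in this plan is the general two-path face observation, which I expect to follow cleanly by induction on the reduction using properties~(2) and~(4).
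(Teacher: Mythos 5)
Your proof is correct and reaches the same contradiction as the paper, but you reach the key fact (that $P_h$ and $P_i$ are adjacent, and hence one must be a boss of the other) by a longer route than necessary. The paper gets the adjacency of the two bosses of $P_j$ immediately from property~(2) (cited as (3) in the paper) of Definition~\ref{def:reduction}: taking $k=j$ there, the triangular faces $v_jz_jw_j$ and $v_j'w_j'z_j'$ supply the edges $v_jz_j$ and $v_j'z_j'$ running between $V(P_h)$ and $V(P_i)$. Once the two bosses of $P_j$ are known to be adjacent, property~(2) applied to whichever of them is later immediately forces the earlier one to be among its bosses, and the dichotomy on $h$ versus $i$ plus Claim~\ref{distinctness} closes the argument exactly as you do. Your detour through property~(4) and the ``two-path face observation'' is valid, but it is heavier machinery than needed, and the inductive proof of that observation which you flag as the subtle missing step is in fact unnecessary: the observation is an instant consequence of property~(2), since a face bounded by arcs of $P_r$ and $P_s$ must contain edges joining them, making them adjacent in $G$, and property~(2) says the only earlier paths adjacent to $P_s$ are its two bosses. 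So there is no real gap, but the bookkeeping about when two-path faces come into existence and whether later insertions destroy them can be dropped entirely.
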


\begin{claimproof}
Recall that $P_h$ and $P_i$ are the bosses of $P_j$ and that by property (3) of Definition~\ref{def:reduction} these two paths are adjacent. So if $P_h$ is not a boss of $P_i$, then $P_{i}$ is a boss of $P_{h}$. This implies that either $P_{f}$ or $P_{g}$ is $P_{i}$, contradicting our assumption. 
\end{claimproof}

\begin{claim}
We have that $|W_{j}| \leq 8$. Further for $a \in \{h,j,f,g,i, q\}$, if the girth of $G$ is at least $10$, then $|W_{a}| \leq 6$. If the girth of $G$ is at least $17$, then $|W_{a}| \leq 2$.
\end{claim}

\begin{claimproof}
By Lemma \ref{naivebound}, it follows that $|W_{j}| \leq 9$. For a contradiction we assume that $G$ is a triangulation and $|W_{j}| = 9$. Observe that for a single path $Q$ starting at $v$ there can be at most $3$ vertices of $W_{j}$ in the path. Thus if $|W_{a}| \geq 4$, then there are two distinct paths $Q_{1}$ and $Q_{2}$ starting at $v$ which certify vertices are in $W_{j}$. Suppose $Q_{1}$ and $Q_{2}$ have endpoints $x$ and $y$ respectively, and that we pick $Q_{1}$ and $Q_{2}$ such that the distance between $x$ and $y$ is maximized on $P_{j}$. As we assume that $|W_{j}| = 9$, we have that $d_{P_{j}}(x_{1},x_{2}) =8$, while by definition we have that $|E(Q_{1})| \leq 4$ and $|E(Q_{2})| \leq 4$. Further $Q_{1}$ and $Q_{2}$ are disjoint from $P_{j}$ except exactly at $x_{1}$ and $x_{2}$ and perhaps at the neighbour of $x_1$ in $P_j$ that is between $x_1$ and $x_2$. Then (using notation from Definition~\ref{def:reduction}), the path $w_{j}P_{j}x_1Q_{1}vQ_{2}x_2P_{j}w_{j}'$ is an isometric path in $\mathcal{P}_{j}$ with the same length and endpoints as $P_{j}$, and further reduces the number of vertices in the interior of the cycle formed with $P_{h}$. Since by Claim~\ref{claim:hboss} $P_h$ is the manager of $P_j$ this contradicts (3) of Definition~\ref{def:reduction}. Therefore, we have $|W_{j}| \leq 8$.   

Now suppose that the girth of $G$ is at least $11$, and for contradiction we have that $|W_{j}| \geq 7$. By Lemma \ref{naivebound} the maximum distance between two vertices in $W_{j}$ is $8$. Observe that if a vertex $x \in W_{j}$ is certified by a path $Q$ of length $4$ where $x$ is the only vertex in $P_{j} \cap V(Q)$, then the neighbours of $x$ in $P_j$ are not in $W_{j}$, as this would create a cycle of length at most $9$. Therefore if all vertices in $W_{j}$ can be certified by paths of length at most $4$ where the endpoint is the only vertex in $P_{j}$, we have that $|W_{j}| \leq 5$, as desired. Thus there must be at least one vertex $x \in W_{j}$ certified by a path $Q$ which has at most $3$ vertices not in $P_{j}$. We may assume that $x$ is an endpoint of $Q$, and $Q$ contains another vertex $y \in W_{j}$ (possibly by changing our choice of $x$ if needed). Note that $y$ must be a neighbour of $x$. If it exists, let $x_{1}$ be the neighbour of $x$ in $P_{j}$ that is not $y$, and $x_{2}$ be the neighbour of $x_{1}$ in $P_{j}$ that is not $x$ (again, if it exists). As $Q$ has at most $3$ vertices not in $P_{j}$, it follows that $x_{1}$ and $x_{2}$ are not in $W_{j}$, as otherwise we create a cycle of length at most $10$. As the maximum distance between two vertices in $V(P_{j}) \cap W_{j}$ is $8$, it now follows that $|W_{j}| \leq 6$. 


If the girth of $G$ is at least $17$, then if $|W_{j}| \geq 4$, there are at least two paths $Q_{1}$ and $Q_{2}$ which certify vertices in $W_{j}$. But as the maximum distance between two vertices in $W_{j}$ on $P_{j}$ is $8$, and $e(Q_{i}) \leq 4$ for $i \in \{1,2\}$, we have that these paths plus $P_{j}$ create a cycle of length at most $16$, a contradiction. Thus there is at most one such path, and such a path can certify at most $2$ vertices, implying that $|W_{j}| \leq 2$.  


Similar arguments work for the other $a \in \{h,j,f,g,i,q\}$.
\end{claimproof}

For the large girth cases, we already have our desired bound. If the girth of $G$ is at least $10$, then $\SubReach(G,\sigma,v) \leq 3 + 6\times 6 = 39$, as desired. 
Finally, if the girth of $G$ is at least $17$, then $\SubReach(G,\sigma,v) \leq 2 \times 6 +3 = 15$. Therefore for the rest of the section, we assume $G$ is triangulated.

\begin{claim}
We have that $|W_{i}| \leq 8$.
\end{claim}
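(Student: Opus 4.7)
My plan is to prove $|W_i|\le 8$ by contradiction, mirroring the argument given for $|W_j|\le 8$. Assume $|W_i|=9$; then by Lemma~\ref{naivebound} applied to the isometric path $P_i$ in $\mathcal{P}_i$, there exist $x_1,x_2\in W_i$ with $d_{P_i}(x_1,x_2)=8$, together with semi-weak $4$-paths $Q_t = x_tz_1^{(t)}z_2^{(t)}z_3^{(t)}v$ certifying $x_t\in\SubReach_4(G,\sigma,v)$ for $t\in\{1,2\}$.

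The key observation is that each $Q_t$ must share a vertex with $V(P_j)$, and this vertex can only be $z_1^{(t)}$. Indeed, $v\in P_k$ lies in the face of $G[V(P_0)\cup\cdots\cup V(P_{k-1})]$ bounded by $P_h$ and $P_j$ (the bosses of $P_k$), whereas $P_i$ sits on the opposite side of $P_j$, so by planarity every $v,x_t$-path in $G$ shares a vertex with $P_j$. Among the vertices of $Q_t$, the endpoints $x_t\in P_i$ and $v\in P_k$ are not in $P_j$, while $z_2^{(t)},z_3^{(t)}\ge_\sigma v$ forces $z_2^{(t)},z_3^{(t)}$ to lie in $P_k$ or in strictly later paths (hence not in $P_j$); so $z_1^{(t)}\in V(P_j)$. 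Setting $y_t:=z_1^{(t)}$, the edge $x_ty_t$ in $Q_t$ together with $d_G(x_1,x_2)=8$ yields $d_G(y_1,y_2)\ge 6$, and the isometry of $P_j$ in $\mathcal{P}_j$ then gives $d_{P_j}(y_1,y_2)\ge 6$. Since each $Q_t$ crosses $P_j$ only once, the subpath $y_tz_2^{(t)}z_3^{(t)}v$ lies entirely on the $P_k$-side (i.e. the $P_h$-side) of $P_j$.

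Assuming WLOG that $y_1$ precedes $y_2$ on $P_j$ (from $w_j$ towards $w_j'$), I would form the alternative path
\[
P_j^{\ast} \;=\; w_jP_jy_1\,z_2^{(1)}z_3^{(1)}v z_3^{(2)}z_2^{(2)}\,y_2P_jw_j'.
\]
Every vertex of the detour $y_1z_2^{(1)}z_3^{(1)}vz_3^{(2)}z_2^{(2)}y_2$ satisfies $\ge_\sigma v$ and therefore belongs to $\mathcal{P}_j$; the detour has length $6$, so $|P_j^{\ast}|\le |P_j|-d_{P_j}(y_1,y_2)+6\le |P_j|$. By the isometry of $P_j$ in $\mathcal{P}_j$, the walk $P_j^{\ast}$ must in fact have length exactly $|P_j|$, and any repeated vertex would shortcut to a strictly shorter walk, contradicting the isometry of $P_j$ directly. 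The resulting simple isometric $w_j,w_j'$-path passes through $v\in P_k$, which lies in the interior of the cycle $v_jP_hv_j'w_j'P_jw_j$ formed by $P_j$ with its manager $P_h$. Since the detour lies on the $P_h$-side of $P_j$, the alternative strictly reduces the number of interior vertices of this cycle, contradicting property~(3) of Definition~\ref{def:reduction}. Hence $|W_i|\le 8$.

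The main obstacle will be handling degenerate situations carefully: for example, $y_1=w_j$ or $y_2=w_j'$, or shared internal vertices between $Q_1$ and $Q_2$. In each such case one must verify that, after any necessary shortcutting, the alternative path still contains $v$ (so that the interior-reduction argument survives) or else produces a walk strictly shorter than $|P_j|$, thereby directly contradicting the isometry of $P_j$. Both branches rely on the planar positioning of $P_k$ inside the $P_h$,$P_j$-face inherited from Definition~\ref{def:reduction}.
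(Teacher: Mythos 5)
Your proof is correct and follows essentially the same route as the paper's: it shows that any witnessing path for a vertex of $P_i$ must pass through $P_j$ at its second-to-last vertex, transfers the spread of $W_i$ to a spread of the corresponding contact vertices on $P_j$ via the isometry of $P_i$ in $\mathcal{P}_i$, and then caps that spread using the isometry of $P_j$ together with the interior-minimality property~(3) of the reduction (the paper phrases this as the direct bound $d_{P_j}(y_1,y_2)\le 5$, hence $d_{P_i}(x_1,x_2)\le 7$, rather than as your contradiction from $|W_i|=9$). The only blemishes are cosmetic: $d_G(x_1,x_2)=8$ should read $d_{\mathcal{P}_i}(x_1,x_2)=8$, and the cleanest justification that $z_1^{(t)}\in V(P_j)$ is via property~(2) and the pairwise distinctness of $P_k,P_h,P_j,P_f,P_g,P_i$ ($P_i$ is a boss only of $P_j$ among the paths $z_1^{(t)}$ could lie on), rather than planarity alone.
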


\begin{claimproof}
As $P_{h}$ is the manager of $P_{j}$, if a path witnesses that $v$ semi-weakly $4$-reaches a vertex in $P_{i}$, then this path must intersect $P_{j}$. Let $Q_{1}$ and $Q_{2}$ be two paths starting at $v$ and ending at $x_{1},x_{2} \in E(P_{i})$ that witness that $v$ semi-weakly $4$-reaches $x_{1}$ and $x_{2}$ respectively, and such that both $Q_{1}$ and $Q_{2}$ intersect $P_{j}$ at vertices $y_{1},y_{2}$ respectively. By definition of $Q_{1}$ and $Q_{2}$, and as $y_{1}, y_2 \leq_\sigma v$, we have that $y_{1}x_{1} \in E(G)$, and $y_{2}x_{2} \in E(G)$. Thus $d_{P_{i}}(x_{1},x_{2}) \leq d_{P_{j}}(y_{1},y_{2}) + 2$, as otherwise we contradict that $P_{i}$ is isometric in $\mathcal{P}_{i}$ because the path from $y_{1}$ to $y_{2}$ in $P_{j}$ together with the edges $y_{1}x_{1}$ and $y_{2}x_{2}$, would form a shorter path. As the paths from $v$ to $y_{1}$ and $y_{2}$ have length at most $3$, this implies that $d_{P_{j}}(y_{1},y_{2}) \leq 5$, as otherwise either we contradict that $P_{j}$ is an isometric path, or it contradicts (2) in the definition of isometric path decomposition. Thus $d_{P_{i}}(x_{1},x_{2}) \leq 7$, and thus there are at most $8$ vertices that $v$ can reach in $P_{i}$ through vertices of $P_{j}$. 
\end{claimproof}

\begin{claim}
\label{Pi1}
Either $|W_{g}| \leq 6$ or $|W_{f}| \leq 6$. Further, if both $|W_{g}| \geq 4$ and $|W_{f}| \geq 4$, then $|W_{h}| \leq 7$. If both $|W_{g}| \geq 3$ and $|W_{f}| \geq 3$, then $|W_{h}| \leq 8$. If $|W_{h}| = 9$, then either $|W_{f}| =0$ or $|W_{g}| =0$.  
\end{claim}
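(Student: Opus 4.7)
My plan is as follows. For each $x \in W_f$, fix a semi-weak-$4$-reach witness path $x = z_0, z_1, \dots, z_s = v$. The semi-weak condition forces $z_2, z_3, z_4 \geq_\sigma v$ whenever these exist, so only $z_0$ and $z_1$ may lie in paths earlier than $P_k$. Since $P_f$ is a boss of $P_h$ and $P_h$ is a boss of $P_k$, the reduction structure separates $V(P_f)$ from $V(P_k)$ by $V(P_h)$, forcing $z_1 \in V(P_h)$. Set $y_x := z_1 \in V(P_h)$, and define $y'_{x'} \in V(P_h)$ analogously for each $x' \in W_g$. Truncating each witness at the gateway vertex shows that the prefix $v \to \dots \to y_x$ of length at most $3$ is itself a semi-weak-$4$-reach witness, so $\{y_x : x \in W_f\} \cup \{y'_{x'} : x' \in W_g\} \subseteq W_h$.

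I will then establish two geometric facts about these projections. First, because $P_f$ is isometric in $\mathcal{P}_f$ and $V(P_h) \subseteq V(\mathcal{P}_f)$, any $y \in V(P_h)$ has at most $3$ neighbors on $V(P_f)$ (their pairwise $P_f$-distance is at most $2$), hence at most $3$ elements of $W_f$ share a common $y_x$. Second, for any $x, x' \in W_f$, the walk $x - y_x - (P_h\text{-subpath}) - y_{x'} - x'$ lies in $\mathcal{P}_f$ and isometry of $P_f$ yields $d_{P_f}(x, x') \leq d_{P_h}(y_x, y_{x'}) + 2$, so $\{y_x : x \in W_f\}$ spans diameter at least $|W_f| - 3$ on $P_h$. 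The analogous statements hold for $W_g$.

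To derive the four bounds, I will combine these facts with planarity and the minimality in Definition~\ref{def:reduction}(3). Since $P_f$ and $P_g$ sit on opposite sides of $P_h$ in the embedding, the images $\{y_x\}$ and $\{y'_{x'}\}$ approach $V(P_h)$ from opposite sides, restricting how they can overlap inside $W_h$; moreover $|W_h| \leq 9$ means $W_h$ spans at most diameter $8$ in $P_h$ by Lemma~\ref{naivebound}. For (1), if both $|W_f|, |W_g| \geq 7$, then both projections span diameter $\geq 4$ in $P_h$, and together they must exceed the diameter-$8$ window of $W_h$ or force an overlapping configuration ruled out by the minimality of $P_j$. Bounds (2) and (3) follow by the same diameter counting with thresholds $|W_f|, |W_g| \geq 4$ and $\geq 3$, producing $|W_h| \leq 7$ and $|W_h| \leq 8$ respectively. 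For (4), a saturated $|W_h| = 9$ fills an entire diameter-$8$ interval of $P_h$, leaving no room for any non-trivial projection from $W_f$ or $W_g$: any such projection would yield a shorter alternative contradicting Definition~\ref{def:reduction}(3).

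The main obstacle will be the planar case analysis when $\{y_x\}$ and $\{y'_{x'}\}$ partially overlap inside $W_h$ near the endpoints of $P_h$, where a single vertex may have neighbors in both $P_f$ and $P_g$. I will need to separate endpoint contributions from interior contributions and repeatedly invoke the minimality property (3) of the reduction to rule out configurations in which an alternative isometric path of smaller interior would otherwise arise.
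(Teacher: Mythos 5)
There is a genuine gap, and it is precisely the central idea of the paper's argument that is missing. The paper does not merely show that a witness path from $v$ to a vertex of $P_f$ must pass through \emph{some} vertex of $P_h$; it shows that for at least one of $P_f,P_g$ (without loss of generality $P_f$), every such witness must have its penultimate vertex equal to one of the \emph{two} designated vertices $v_k,v_k'$ of $P_h$ from property (2) of Definition~\ref{def:reduction} (the attachment points of $P_k$ on its manager). Since each of $v_k,v_k'$ has at most $3$ neighbours on the isometric path $P_f$ (Lemma~\ref{naivebound} with $r=1$), $|W_f|\leq 6$ is immediate; the remaining three bounds then follow because using both $v_k$ and $v_k'$ (forced as soon as $|W_f|\geq 4$) yields paths of length at most $3$ from $v$ to each of them inside $\mathcal{P}_h$, which caps $d_{P_h}(v_k,v_k')$ and hence $|W_h|$, while $|W_h|=9$ forbids such short paths and forces $W_f=\emptyset$. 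Your gateway vertices $y_x$ are allowed to range over all of $W_h$, and this is too weak: with up to nine gateways, each contributing at most three vertices of $P_f$, your two facts only give $|W_f|\leq 27$; and the spread inequality $d_{P_h}(y_x,y_{x'})\geq |W_f|-3$ does not rescue the argument, since two subsets of $W_h$ each of diameter at least $4$ fit comfortably (indeed may coincide) inside the diameter-$8$ window that $W_h$ occupies by Lemma~\ref{naivebound}. Note that the statement is an ``either/or'' exactly because only one of $P_f,P_g$ gets funnelled through $v_k,v_k'$; your symmetric treatment of both does not reflect the mechanism that makes the claim true.

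The final two paragraphs of your plan defer all four bounds to an unspecified planar case analysis invoking property (3) of Definition~\ref{def:reduction}, but that minimality condition is the tool used for the bound $|W_j|\leq 8$, not here, and no concrete contradiction is exhibited for, say, $|W_f|=|W_g|=7$ with overlapping projections. The local facts you do establish correctly (the at-most-$3$-neighbours bound and the projection inequality $d_{P_f}(x,x')\leq d_{P_h}(y_x,y_{x'})+2$, the latter being exactly the paper's argument for $|W_i|\leq 8$) are sound but insufficient; without confining the gateways to $\{v_k,v_k'\}$ none of the four assertions of the claim follows.
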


\begin{claimproof}
As all of $P_{g},P_{h},P_{k},P_{j},P_{i}$ and $P_{j}$ are distinct, and by property (2) of isometric path decompositions, for one of $P_{g}$ or $P_{f}$ the vertices in this path can only be semi-weakly 4-reached by $v$ via the vertices $v_k$ and $v_k'$ of $P_h$ (here we are again using notation from Definition~\ref{def:reduction}). Without loss of generality, suppose that is so for $P_{f}$. Then each of $v_k$ and $v_k'$ are adjacent to a most $3$ vertices in $P_{f}$ by Lemma \ref{naivebound}. This implies that $|W_{f}| \leq 6$, as desired. Further, if $|W_{f}|\geq 4$, we have a path from $v$ to $v_{k}$ with length at most $3$, and one such path from $v$ to $v_{k}'$. This implies that the distance from $v_{k}$ to $v_{k}'$ in $P_{h}$ is at most $6$, as otherwise we contradict that $P_{h}$ is isometric in $\mathcal{P}_{h}$, and thus by Lemma \ref{naivebound} it follows that $|W_{h}| \leq 7$. Similarly, if $|W_{f}| =3$, then without loss of generality, there is a path of length at most three from $v$ to $v_{k}$. Let $w\in W_h$ be such that $d_{P_h}(w,v_k)$ is maximised. The distance from $v_{k}$ to $w$ is at most $7$, and thus by Lemma \ref{naivebound} we have $|W_{h}| \leq 8$. Finally, if $|W_{h}| =9$, then there is no path of length at most $3$ from $v$ to $v_{k}$ or $v_{k}'$, as otherwise we contradict that $P_{h}$ is isometric in $\mathcal{P}_{h}$. Therefore we have $|W_{f}| =0$ or $|W_{g}| =0$ in this case. 
\end{claimproof}

To finish the proof, we have to consider cases according to Claim~\ref{Pi1}. If $|W_{h}| = 9$, then without loss of generality we have $|W_{f}| =0$, and thus $|\SubReach_{4}(G,\sigma,v_{k})| \leq |W_{k}| + |W_{j}| + |W_{i}| + |W_{h}| + |W_{g}| + |W_{f}|  \leq 5 + 8\times 2 + 9\times 2   = 39$. So we may assume that $|W_{h}| \leq 8$. If $|W_{h}| =8$, then without loss of generality we may assume that $|W_{f}| = 3$, and in this case $|\SubReach_{4}(G,\sigma,v_{k})| \leq |W_{k}| + |W_{j}| + |W_{i}|+ |W_{h}| + |W_{g}| + |W_{f}|  \leq 5  + 8\times 3 + 9 + 3  = 41$. Therefore we may assume that $|W_{h}| \leq 7$. In this case we can assume without loss of generality we have $|W_{f}| \leq 6$. Therefore $|\SubReach_{4}(G,\sigma,v_{k})| \leq |W_{k}| + |W_{j}| + |W_{i}|+ |W_{h}| + |W_{g}| + |W_{f}| \leq 5+8\times2 + 7 +9 +6 =43$.
\end{proof}

\section{Improved bounds for powers of graphs with excluded minors}

Together with the bounds on the weak colouring numbers obtained in \cite{grohe2018coloring, vandenHeuveletal2017, vandenHeuvelWood2018, JoretMicek2022}, Theorem~\ref{NOdMPZ}  gives explicit upper bounds on $\chisub(G^p)$ when $G$ has bounded treewidth, bounded simple treewidth, bounded genus or excludes some minor. In this section we obtain upper bounds for $\gcol_{k,\ell}(G)$ when $G$ has bounded treewidth, bounded simple treewidth, bounded genus, or excludes some complete minor or some $K^*_{s,t}$ as a minor, where $K^*_{s,t}$ is the complete join of $K_s$ and $\bar K_t$. By Theorem~\ref{thm:gcol}, we obtain improved upper bounds for $\chisub(G^p)$ for all such $G$.


\begin{thm}\label{thm:gcol_bounds}
Let $k,\ell,t, g$ be positive integers with $k\le \ell$. For every graph $G$ we have the 
 upper bounds on  $\gcol_{k,\ell}(G)$ 
 displayed in Table~\ref{tab:gcol}, depending on the constraints on $G$.
\end{thm}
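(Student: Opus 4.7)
The plan is to establish each row of Table~\ref{tab:gcol} separately, following the blueprints of the proofs of the corresponding $\wcol_\ell$ bounds in Table~\ref{tab:weak}, but systematically replacing the exploitation of the path-length bound $\ell$ by the exploitation of the $k$-descent constraint built into $\gcol_{k,\ell}$. In each case the vertex ordering $\sigma$ will be \emph{the same} as the one used to prove the $\wcol_\ell$ bound; only the counting of $\GReach_{k,\ell}[G,\sigma,v]$ is new. This is why $\binom{t+\ell}{t}$, $\binom{\ell+2}{2}$, $\binom{t+\ell-2}{t-2}$, and $\binom{s+\ell}{s}$ in Table~\ref{tab:weak} all become the corresponding expressions with $\ell$ replaced by $k$ in Table~\ref{tab:gcol}, while the factor $(2\ell+1)$ (which comes from Lemma~\ref{naivebound}) is retained.

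For the two treewidth rows, I would take a tree decomposition $(T,\beta)$ of width $t$ (resp.\ a simple tree decomposition), root $T$, and use the standard ordering in which vertices are ordered by the depth of the topmost bag containing them. A vertex $u$ that is $k$-hop $\ell$-reachable from $v$ must lie in an ancestor bag of $v$, and any witness path visits at most $k$ strict ``new minima'' on its way down the ancestor chain. The number of multisets of $k$ choices from a bag of size $t+1$ is exactly $\binom{t+k}{t}$, which yields the first row without any dependence on $\ell$. For the simple-treewidth row I would mirror the recursion of Joret and Micek~\cite{JoretMicek2022}: their proof bounds, at each recursion step, the number of vertices one can reach through a separator of size $\leq t-1$ by branching $(\ell+1)^{t-1}$ ways; restricting to $k$-descent witnesses reduces the branching factor to $(k+1)^{t-1}$, and the logarithmic factor $\lceil\log k\rceil + 2\lfloor\ell/k\rfloor$ appears naturally from counting the depths of recursion, where the $\lfloor\ell/k\rfloor$ term tracks how many ``lengths of $k$'' fit into the remaining budget $\ell$.

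For the genus and excluded-minor rows, I would use the BFS / isometric-path decomposition of Van~den~Heuvel et al.~\cite{vandenHeuveletal2017, vandenHeuvelWood2018}. Their framework produces a family $\mathcal{P}=\{P_0,P_1,\ldots\}$ of isometric paths, and their key structural statement is that for every vertex $v$, the set of paths from $\mathcal{P}$ that can be reached by paths of length $\leq \ell$ forms a subgraph in a host graph of bounded complexity (planar dual-like for genus, $K_t$-minor-free for excluded minors, etc.). Lemma~\ref{naivebound} contributes a factor $2\ell+1$ per isometric path reached. The new ingredient is counting the number of distinct paths of $\mathcal{P}$ that a witness for $k$-hop $\ell$-reachability can visit: each time the witness enters a new isometric path from a smaller one, it must record a strict new minimum, so the total number of isometric paths visited other than $P_a \ni v$ is at most the number of $k$-walks in the host decomposition graph. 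For $K_t$-minor-free graphs this number is $(\binom{t+k-2}{t-2}-1)(t-3)$; for genus $g$ it is $(2g+\binom{k+2}{2}-1)$; and for $K_{s,t}^*$-minor-free it is $s(t-1)\binom{s+k}{s}$ (or the specialised $K_{2,t}^*$ and $K_{3,t}^*$ expressions). Multiplying each by $2\ell+1$ and adding the contribution from $P_a$ itself (which is $\ell+1$ rather than $2\ell+1$, since on its own path the witness cannot go ``to the right'' of $v$) gives the announced bounds, with the minor correction terms $\pm\ell, +1$ accounting exactly for this starting-path adjustment.

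The main obstacle I foresee is in the excluded-minor rows: carefully replacing length-$\ell$ walks by $k$-descent walks in the host graph (the graph whose vertices are the isometric paths and whose edges record adjacencies), while checking that the classical minor-exclusion arguments of~\cite{vandenHeuveletal2017, vandenHeuvelWood2018} still apply to bound the number of reachable paths by a \emph{$k$-indexed} binomial rather than an $\ell$-indexed one. Concretely, one must verify that if a witness path in $G$ has at most $k$ descents in $\sigma$, then its trace in the ``path-of-paths'' host graph also has at most $k$ descents in the corresponding ordering. Once this correspondence is nailed down, all binomial factors drop from $\ell$ to $k$ and the explicit constants follow by the exact same case distinctions that produce the $\wcol_\ell$ bounds. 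The off-by-one corrections ($-1$, $+\ell+1$, $-\ell$) are bookkeeping artefacts coming from distinguishing the starting path of $v$ from the paths reached through descents.
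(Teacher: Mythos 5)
Your proposal is correct and takes essentially the same route as the paper: the same simplicial/BFS orderings and the same decompositions of \cite{vandenHeuveletal2017,vandenHeuvelWood2018} are used, with the $\ell\to k$ drop in the binomial factors coming precisely from the descent budget, and the $(2\ell+1)$ factors retained via Lemma~\ref{naivebound}. The two points you flag as delicate are resolved in the paper exactly as you anticipate: your ``at most $k$ new minima'' observation is formalised as the identity $\GReach_{k,\ell}[G,L,v]=\WReach_{k}[G,L,v]$ for simplicial orderings of $k$-trees (Lemma~\ref{GreachYWreach}, via $\Reach_{\infty}=\Reach_{1}$), and your ``host graph'' step is Lemma~\ref{gcolBFS}, which contracts each part of the decomposition, notes that the quotient has treewidth bounded by the width of the decomposition, and applies the treewidth bound $\binom{t+k}{t}$ to the quotient before lifting the ordering back to $G$.
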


We dedicate the rest of the section to prove this result.

\subsection{Bounded treewidth}

A graph is a \emph{$k$-tree} if it is either a clique of order $k+1$ or can be obtained from a smaller $k$-tree by adding a vertex and making it adjacent to $k$ pairwise-adjacent vertices. The \emph{treewidth}, $\mathrm{tw}(G)$, of a graph $G$ is the smallest $k$ such that $G$ is a subgraph of a $k$-tree.

For a $k$-tree $G$, we say that an ordering $L$  of $V(G)$ is a \emph{simplicial ordering} if it is obtained in the following way. Fix a way of constructing $G$ from a $(k+1)$-clique $K_0$ and let the vertices of $K_0$ be the smallest in the ordering. Then for $v\notin K_0$ let $u<_{L}v$ if $u$ was added to the $k$-tree before $v$.

\begin{lemma}\label{lemma:reachInfinity}
Let $k$ be a positive integer, $G$ a $k$-tree, $L$ a simplicial ordering of $V(G)$. For every $v\in V(G)$ we have $\Reach_{1}[G,L,v]=\Reach_{\infty}[G,L,v]$
\end{lemma}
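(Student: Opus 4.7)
The plan is to reduce the nontrivial inclusion $\Reach_\infty[G,L,v]\subseteq\Reach_1[G,L,v]$ to the defining structural property of a simplicial ordering: for every vertex $w\in V(G)$, the set $N^-(w):=\{y\in V(G):y<_L w\text{ and }yw\in E(G)\}$ induces a clique in $G$. This is immediate from the construction: when $w$ is added to the $k$-tree, its already-present neighbours form a $k$-clique by definition, and for $w$ in the initial $(k+1)$-clique the earlier neighbours form a sub-clique.

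First I would unwind what a witness path for $\Reach_\infty[G,L,v]=\GReach_{1,\infty}[G,L,v]$ looks like. If $u\ne v$ is $1$-hop $\infty$-reachable from $v$, then there is a (simple) path $P=x_0x_1\cdots x_s$ with $x_0=v$, $x_s=u$, and $u<_L x_i$ for every $0\le i\le s-1$; the ``$1$-hop'' condition says $j=s$ is the unique index at which $x_j$ is a new running minimum of $x_0,\dots,x_j$. A one-line induction on $j$ then shows that every interior vertex satisfies $x_j\ge_L v$ for $1\le j\le s-1$.

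Next I would prove by induction on the length $s$ of a witnessing path that $u$ lies in $\Reach_1[G,L,v]$. The base $s\le 1$ is immediate (either $u=v$ or $uv$ is an edge with $u<_L v$). For $s\ge 2$, I would pick an interior index $j\in\{1,\dots,s-1\}$ for which $x_j$ is maximum under $L$. Both $x_{j-1}$ and $x_{j+1}$ are neighbours of $x_j$ on $P$, so they are distinct from $x_j$; the inequality $x_{j-1}\le_L x_j$ holds because either $j=1$ and $x_{j-1}=v\le_L x_j$, or $j\ge 2$ and $x_{j-1}$ is an interior vertex bounded by the maximum. Similarly $x_{j+1}\le_L x_j$, noting that when $j+1=s$ we have $x_{j+1}=u<_L v\le_L x_j$. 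Since $P$ is simple, these inequalities are strict, so $x_{j-1},x_{j+1}\in N^-(x_j)$. The clique property of $N^-(x_j)$ gives the edge $x_{j-1}x_{j+1}\in E(G)$. Replacing $x_{j-1}x_jx_{j+1}$ by this single edge produces a strictly shorter simple path from $v$ to $u$; its interior vertices are still all $\ge_L v$, so the ``$1$-hop'' running-minimum condition is preserved, and the inductive hypothesis closes the argument.

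The main obstacle I anticipate is purely bookkeeping: one has to be slightly careful that $j$ is chosen as a maximum \emph{interior} vertex (so that both $x_{j-1}$ and $x_{j+1}$ land in the \emph{same} $N^-(x_j)$), and then handle the boundary cases $j=1$ and $j=s-1$ — the first uses $x_0=v\le_L x_j$, the second uses $u<_L v$ — to conclude the strict inequalities needed to place $x_{j-1},x_{j+1}$ in $N^-(x_j)$. Once this is pinned down, the shortcut argument terminates in at most $s$ steps and produces the required edge from $v$ to $u$.
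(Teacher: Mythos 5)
Your proof is correct. The inclusion $\Reach_1\subseteq\Reach_\infty$ is trivial, your structural claim that $N^-(w)$ induces a clique for every $w$ is exactly the perfect-elimination property of a simplicial ordering of a $k$-tree, and the shortcutting induction is sound: choosing $j$ to maximise $x_j$ among \emph{interior} indices forces $x_{j-1},x_{j+1}\in N^-(x_j)$ (with the two boundary cases handled as you describe, using $x_0=v<_L x_j$ and $u<_L v\le_L x_j$), the resulting shorter path is still simple with all interior vertices $\ge_L v$, and the induction terminates at a direct edge $vu$. Your route is, however, genuinely different from the paper's. The paper gives a one-shot separator argument: it considers the component $C_v$ of $v$ in $(G\setminus\Reach_1[G,L,v])\cup\{v\}$, observes that every vertex of $C_v$ is $\ge_L v$, and concludes that any witness path for $\Reach_\infty$ must exit $C_v$ only at its final step, landing on a down-neighbour of $v$; this is shorter but leaves the underlying separator property of $k$-trees for the reader to supply. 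Your argument is the classical chordal-graph shortcutting proof: it is more self-contained, it is constructive (it actually transforms the witness path into the edge $vu$), and it applies verbatim to any chordal graph equipped with a reversed perfect elimination ordering, not only to $k$-trees. Either proof serves the lemma's role in the paper equally well.
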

\begin{proof}
If $v$ is one of the smallest $k+1$ vertices in $L$ then every vertex $u<_Lv$ satisfies $u\in\Reach_1[G,L,v]$, as desired. Otherwise, consider the component $C_v$ of $(G\setminus\Reach_{1}[G,L,v])\cup\{v\}$ which contains $v$ and note that $v\leq_L w$ for every $w\in C_v$. If $x$ belongs to $\Reach_{\infty}[G,L,v]$ then every path that witnesses this has all its internal vertices in $C_v$. But if such a path exists then we must have $x\in\Reach_{1}[G,L,v]$ and the results follows. 
\end{proof}

The following can be deduced for the Theorem 4.2 of \cite{grohe2018coloring}.

\begin{lemma}\label{lemma:grohe}
Let $G=(V,E)$ be a graph and $L$ a linear ordering of $V$ with $t+1\ge \max_{y\in V}|\Reach_{\infty}[G,L,y]|$. For every positive integer $k$ and every $y\in V$ we have $$|\WReach_{k}[G,L,y]|\leq\binom{t+k}{t}.$$
\end{lemma}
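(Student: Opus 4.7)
The plan is to reduce the statement to Theorem~4.2 of \cite{grohe2018coloring}, which bounds $\wcol_k$ for graphs of treewidth at most $t$ (equipped with a suitable elimination ordering) by $\binom{t+k}{t}$. The key point is to verify that the hypothesis on $\Reach_\infty$ is exactly what is needed to invoke that theorem with the ordering $L$ itself.

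First I would construct a chordal supergraph $G'\supseteq G$ on the same vertex set by performing a fill-in along $L$: process the vertices in decreasing order of $L$ and, for each vertex $v$, add edges to turn its set of smaller (under $L$) neighbours in the current graph into a clique. A standard induction on the elimination step shows that after all fill-in edges have been added, the set of smaller neighbours of $v$ in $G'$ is precisely $\Reach_{\infty}[G,L,v]\setminus\{v\}$. Thus, by hypothesis, every vertex has at most $t$ smaller neighbours in $G'$, so $G'$ is chordal with clique number at most $t+1$, i.e., has treewidth at most $t$, and $L$ is a simplicial ordering of $G'$ in the sense of the preceding subsection.

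Second, I would compare weak reachability in $G$ and in $G'$. Since $G$ is a subgraph of $G'$, every walk in $G$ is a walk in $G'$, and so $\WReach_k[G,L,y]\subseteq \WReach_k[G',L,y]$ for every $y$ and every $k$. Applying Theorem~4.2 of \cite{grohe2018coloring} to $G'$ together with the simplicial ordering $L$ then gives $|\WReach_k[G',L,y]|\le \binom{t+k}{t}$, and hence the same bound holds for $G$.

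The one thing that needs checking is that Theorem~4.2 of \cite{grohe2018coloring} actually yields a bound for the specific ordering produced by the fill-in procedure, not merely for some optimal ordering. However, its proof proceeds precisely by processing a chordal (or chordally-completed) graph according to a perfect elimination ordering and counting weakly reachable vertices via an encoding by subsets of the backward neighbourhood of size at most $k$; this encoding is what yields the $\binom{t+k}{t}$ bound and applies verbatim to $G'$ with the ordering $L$ constructed above. This is the only non-routine step, and once it is checked the lemma follows immediately.
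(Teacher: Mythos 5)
Your proof is correct and takes essentially the route the paper intends: the paper gives no argument for this lemma beyond the remark that it ``can be deduced from Theorem 4.2 of \cite{grohe2018coloring}'', and your chordal fill-in along $L$ --- using the standard (Rose--Tarjan--Lueker) characterisation of fill edges to identify the back-neighbourhood of each vertex in the filled graph with $\Reach_{\infty}[G,L,\cdot]\setminus\{\cdot\}$, and the monotonicity of $\WReach_k$ under adding edges --- is exactly the deduction being gestured at. You also correctly flag the only delicate point, namely that the cited bound must be available for the specific elimination ordering $L$ rather than for some optimal ordering, and the counting argument of \cite{grohe2018coloring} does apply verbatim to any ordering in which every back-neighbourhood is a clique of size at most $t$.
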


In order to use this result for all generalised colouring numbers, and not just the weak colouring numbers, the following will be key.

\begin{lemma}
\label{GreachYWreach}
Let $k,\ell$ be positive integers with $k\leq\ell$, $G$ a $k$-tree, $L$ a  simplicial ordering of $V(G)$. For every $v\in V(G)$ we have
$$\WReach_{k}[G,L,v]=\GReach_{k,\ell}[G,L,v].$$
\end{lemma}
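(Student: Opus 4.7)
The plan is to prove the two inclusions separately, with the nontrivial direction reducing to Lemma~\ref{lemma:reachInfinity}. The easy direction $\WReach_{k}[G,L,v]\subseteq \GReach_{k,\ell}[G,L,v]$ is immediate from the definitions: any path of length $s\le k$ witnessing weak $k$-reachability has at most $s\le k$ strict left-to-right minima, and since $k\le\ell$ it also certifies $k$-hop $\ell$-reachability.

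For the reverse inclusion, I would fix $u\in\GReach_{k,\ell}[G,L,v]$ and pick a witnessing path $P=x_0x_1\dots x_s$ with $x_0=v$, $x_s=u$, $s\le\ell$, and at most $k$ indices $j\in[s]$ where $x_j<_L x_{i-1}$ for all $i\in[j]$ (a ``strict left-to-right minimum''). If $s=0$ the statement is trivial, so assume $s\ge 1$. Let the strict left-to-right minima on $P$ be at positions $0=j_0<j_1<\dots<j_m=s$ (including $j_0=0$ as the starting vertex $v$), so that $m\le k$ and $x_{j_0}>_L x_{j_1}>_L\dots>_L x_{j_m}=u$.

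The key observation is that for each $r\in[m]$, the subpath $Q_r=x_{j_{r-1}}x_{j_{r-1}+1}\dots x_{j_r}$ certifies that $x_{j_r}\in\Reach_{\infty}[G,L,x_{j_{r-1}}]$: indeed, because $x_{j_r}$ is a strict left-to-right minimum on the entire path $P$, it is strictly smaller in $L$ than every $x_0,\dots,x_{j_r-1}$, hence in particular smaller than every internal vertex of $Q_r$ and smaller than its other endpoint $x_{j_{r-1}}$. Applying Lemma~\ref{lemma:reachInfinity}, we obtain $x_{j_r}\in\Reach_{1}[G,L,x_{j_{r-1}}]$, so $x_{j_{r-1}}x_{j_r}$ is an edge of $G$.

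Concatenating these edges, the sequence $v=x_{j_0},x_{j_1},\dots,x_{j_m}=u$ is a walk in $G$ of length $m\le k$; since these vertices are pairwise distinct (they form a strictly decreasing chain in $L$), it is in fact a path. Moreover, $u=x_{j_m}$ is strictly smaller in $L$ than every other vertex on this path. Hence it witnesses $u\in\WReach_{k}[G,L,v]$, giving the desired equality. The main subtlety—and the only place where the $k$-tree structure enters—is in the step invoking Lemma~\ref{lemma:reachInfinity}; everything else is a purely combinatorial ``shortcutting'' argument that peels off the long stretches of $P$ between consecutive left-to-right minima.
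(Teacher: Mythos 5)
Your proof is correct and follows essentially the same route as the paper's: both split the witnessing path at its strict left-to-right minima, use Lemma~\ref{lemma:reachInfinity} to contract each segment between consecutive minima to a single edge of the $k$-tree, and concatenate the resulting at most $k$ edges into a path certifying weak $k$-reachability. If anything, your justification that each segment $Q_r$ witnesses $\Reach_\infty$-membership is slightly more explicit than the paper's.
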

\begin{proof}

By definition we have $\WReach_{k}[G,L,v]\subseteq \GReach_{k,\ell}[G,L,v]$ so let us see that the other inclusion holds. Consider $u\in \GReach_{k,\ell}[G,L,v]\setminus\{v\}$. By definition, there exists an $u,v$-path $P=w_0...w_s$ with $w_0=v$, $w_s=u$, $s\leq\ell$, for every $i\in[s]$ $u<_{L}w_{i-1}$ and the set $I=\{j\in[s]:w_{j}\leq_{L}w_{i-1}\text{ for every } i\in[j]\}$ satisfies that $|I|\leq k$.

Note that $I$ is nonempty because we have $w_s=u\in I$. Sort the elements of $I$ in increasing order, that is let  $I=\{j_1,...,j_m\}$ where $j_i\leq j_{i+1}$ for every $i\in[m-1]$. By definition of $I$ we have that $w_{j_{i+1}}<_{L}w_{j_{i}}$ for every $i\in[m-1]$ and $w_{j_{m}}=u$.

Let us see that $T=vw_{j_{1}}...w_{j_{m-1}}u$ is an $u,v$-path that witnesses that $u\in \WReach_{k}[G,L,v]$. The subpath $v\dots w_{j_1}$ of $P$ witnesses that $w_{j_1}\in\Reach_{\infty}[G,L,v]$ which by Lemma $\ref{lemma:reachInfinity}$ implies $w_{j_1}\in\Reach_{1}[G,L,v]$. Similarly, for every $i\in[m-1]$ we have that $w_{j_{i}}w_{j_{i+1}}\in E(G)$, because some subpath of $P$ witnesses that $w_{i+1}\in\Reach_{\infty}[G,L,w_i]=\Reach_{1}[G,L,w_i]$. Since $u$ is minimum in $T$ with respect to $L$, the result follows. 
\end{proof}

We now obtain our upper bounds for graphs with bounded treewidth. Since $\gcol_{k,\ell}(G)$ cannot decrease if we add edges, we may assume that $G$ is a $k$-tree, and let $L$ be a simplicial ordering of $G$. Given that $G$ has treewidth at most $t$, the ordering $L$ satisfies that $t+1\geq |\Reach_{1}[G,L,v]|$ which by Lemma \ref{lemma:reachInfinity} implies $t+1\geq|\Reach_{\infty}[G,L,v]|$. Moreover, by Lemma \ref{GreachYWreach} for every vertex $v$ and for every $k$, we have $\WReach_{k}[G,L,v]=\GReach_{k,\ell}[G,L,v]$, and the result follows from Lemma \ref{lemma:grohe}.

\subsection{Bounded simple treewidth}

Suppose we build a $k$-tree with the restriction that when adding a new vertex, the clique to which we make it adjacent cannot have been used when adding some other vertex. In such a case, we say that the $k$-tree  is a \emph{simple $k$-tree}. The \emph{simple treewidth}, $\mathrm{stw}(G)$, of a graph $G$ is the smallest $k$ such that $G$ is a subgraph of a simple $k$-tree. It is not hard to see that we have $$\mathrm{tw}(G)\le \mathrm{stw}(G)\le \mathrm{tw}(G)+1.$$

The main ingredient for proving the bound for graphs with bounded simple treewidth is the following lemma for which we need the well-known fact (see~\cite{JoretMicek2022}, for example) that for every $n$-vertex path $P_n$ we have 
\begin{equation}\label{wcolinfty}
\wcol_\infty(P_n) =\lceil\log_{2}(n+1) \rceil.
\end{equation}

\begin{lemma}\label{gcolpath}
Let $k,\ell$ be positive integers with $k\le \ell$. For every path  $P$ we have $\gcol_{k,\ell}(P)\leq \lceil \log k \rceil +2\lfloor\ell/k\rfloor$.
\end{lemma}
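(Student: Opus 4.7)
The plan is to exhibit, for each $k\le \ell$, an ordering $\sigma$ of $V(P)$ (depending on $k$ and $\ell$) such that $|\GReach_{k,\ell}[P,\sigma,v]|\le \lceil \log k\rceil+2\lfloor \ell/k\rfloor$ for every vertex $v$. The construction would combine two ingredients.

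\emph{Local ingredient.} I would partition $V(P)$ into consecutive blocks of size $k$ and use inside each block the midpoint ordering witnessing the identity $\wcol_\infty(P_k)=\lceil \log_2(k+1)\rceil$ recalled in~\eqref{wcolinfty}. This guarantees that for any vertex $v$ the set of vertices in $v$'s own block that can be the minimum of a subpath starting at $v$ and staying inside the block has size at most $\lceil \log(k+1)\rceil$, which accounts for the $\lceil \log k\rceil$ summand in the claimed bound.

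\emph{Global ingredient.} Between blocks, I would order things in such a way (either linearly left-to-right or by a midpoint recursion applied to the ``block path'') that only a bounded number of vertices in each crossed block can serve as potential subpath-minima when viewed from outside. The key claim is that, as the walk from $v$ leaves its block and enters each successive block in either direction, at most two new candidates are created: intuitively, the block-boundary vertex where the walk enters (which always creates a prefix minimum under the between-block ordering) and an internal midpoint vertex of the block that can still dominate the subpath minimum. Since the length restriction $\ell$ limits the walk to at most $\lfloor \ell/k \rfloor$ blocks beyond $v$'s own, this contributes at most $2\lfloor \ell/k\rfloor$ vertices.

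Summing the two contributions yields a bound of $\lceil \log(k+1)\rceil+2\lfloor \ell/k\rfloor$, after which a small bookkeeping step closes the gap between $\lceil \log(k+1)\rceil$ and $\lceil \log k\rceil$: the extra unit can be absorbed into the $2\lfloor \ell/k \rfloor$ term whenever $\lfloor \ell/k \rfloor\ge 1$, and the degenerate case $k=\ell=1$ is handled by the identity ordering of $V(P)$, for which $\gcol_{1,1}(P)=\col(P)=2$. The main obstacle is proving the ``at most two candidates per traversed block'' estimate rigorously: it requires a careful coordination between the midpoint structure inside each block and the global block ordering so that the prefix minima accumulated along the walk do not free up additional candidate minima beyond the two geometrically distinguished positions. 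This parallels the combinatorial bookkeeping used in~\cite{JoretMicek2022} to bound the weak colouring numbers of graphs of bounded simple treewidth, and it is the technical heart of the argument.
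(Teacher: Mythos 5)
Your proposal has a genuine gap at precisely the point you flag: the ``at most two new candidates per traversed block'' estimate is not established, and for the orderings you actually describe it is not true. If consecutive blocks of size $k$ are ordered left-to-right (or by a midpoint recursion on the block path), then from a vertex $v$ walking into an adjacent, globally smaller block $B'$, the vertices of $B'$ that can be the minimum of the traversed subpath are governed by the internal midpoint ordering of $B'$, which yields on the order of $\lceil \log k\rceil$ candidates in $B'$, not two. Nothing in the sketch coordinates the entry point of the walk with the midpoint structure of $B'$ so as to cut this down, and the $k$-hop budget does not help here since reaching a single deep vertex of $B'$ costs few hops. So the contribution of crossed blocks would be $\Theta(\lfloor \ell/k\rfloor\log k)$ rather than $2\lfloor \ell/k\rfloor$, and the claimed bound does not follow.

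The missing idea, which is how the paper proceeds, is to use every $k$-th vertex as a \emph{separator} rather than as a block boundary: let $V_0=\{v_i : i\equiv 0 \pmod k\}$, place \emph{all} of $V_0$ first in the ordering, and only then order each component of $P-V_0$ (which has at most $k-1$ vertices) by the midpoint ordering witnessing $\wcol_\infty(P_{k-1})\le\lceil\log_2 k\rceil$. With this ordering, a vertex $u$ lying in a component of $P-V_0$ other than the one containing $v$ is \emph{never} reachable: any $v,u$-path must cross some $w\in V_0$, and $w<_L u$, violating the requirement that $u$ be smaller than every other vertex of the witnessing path. Hence the cross-block contribution collapses to counting the vertices of $V_0$ within distance $\ell$ of $v$, which is at most $2\lfloor \ell/k\rfloor$, while $v$'s own component contributes at most $\lceil\log k\rceil$. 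This also sidesteps your $\lceil\log(k+1)\rceil$ versus $\lceil\log k\rceil$ bookkeeping, since the components have size $k-1$, not $k$. Your treatment of the case $k=1$ by the identity ordering is fine and matches the paper.
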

\begin{proof}
The proof builds on that of Theorem~\cite[Theorem 1]{JoretMicek2022}. We enumerate the vertices of $P$ as $v_1,v_2, \dots, v_n$ by going from one end of $P$ to the other. If $k=1$ we take an ordering $L$ that follows this enumeration and notice that for every $v\in V(P)$ we have $\GReach_{k,\ell}[G,L,v]\le 2 \le2\lfloor\ell/k\rfloor.$ So we can assume we have $k\ge 2$, and we let $V_0=\{v_i\in V(P) \mid i=0 \textnormal{ (mod } k)\}$. We define an ordering $L$ on $V(P)$ such that $x<_Ly$ whenever $x\in V_0$ and $y\notin V_0$, and such that for every component $P'$ of $P-V_0$ we have $\wcol_\infty(P',L)\le\lceil \log(k)\rceil$ (we can do this because by \eqref{wcolinfty} each such component has at most $k-1$ vertices).

Note that for any $v\in V(P)$  the number of vertices of $V_0\setminus\{v\}$ that are at distance at most~$\ell$ from $v$ is at most $2\lceil\ell/k \rceil$. If $v\in V_0$, then by construction we have $\GReach_{k,\ell}[G,L,v]\subseteq V_0$, and so $|\GReach_{k,\ell}[G,L,v]|\le 2\lceil\ell/k \rceil+1$. Otherwise, if $v\notin V_0$ the construction gives us $\GReach_{k,\ell}[G,L,v]\subseteq V_0\cup P_{v}$, where $P_v$ is the component of $P-V_0$ containing $v$. If $L_v$ is the restriction of $L$ to $P_v$ then we have $|\GReach_{k,\ell}[P_v,L_v,v]|\le \wcol_{\infty}(P_v,L_v)\le \lceil \log(k)\rceil$, and we obtain $|\GReach_{k,\ell}[G,L,v]|\le  \lceil \log(k)\rceil+2\lceil\ell/k \rceil$, as desired.
\end{proof}

The bounds for graphs with bounded simple treewidth follow from this lemma, using straightforward modification of the layering arguments used in the proof of Theorem 2 in~\cite{JoretMicek2022}




\subsection{Bounded genus and excluded minors}\label{sec:manybounds}

In this section we obtain bounds for the generalised colouring numbers in graphs with certain excluded minors. Our bounds generalise those known for the weak colouring numbers \cite{vandenHeuveletal2017, vandenHeuvelWood2018}, including small improvements mentioned in \cite{KYY2020}. We need some definitions and lemmas which will allow us to use know decompositions to obtain our bounds.

Let $\mathcal{H}=\{H_1,...,H_s\}$ be a decompostion of a graph $G$ (as defined in the previous section). We say $\mathcal{H}$ is \emph{connected} if every $H_i$ is connected. Let $C$ be a component  of $G[H_{\geq{i+1}}]$ with $i\in\{1,...,s-1\}$. The $i$-separating number of $C$, $s_{i}(C)$, is the number $s$ of graphs in $\{H_1,...,H_i\}$ such that they are connected to $C$. Let $w_i(\mathcal{H})$ be the maximum $s_{i}(C)$ over all components $C$ of $G[H_{\geq i}]$. The width of $\mathcal{H}$ is defined as $\max_{1\leq i\leq s-1}w_i(\mathcal{H})$.

A spanning tree $T$ of $G$ rooted at a vertex $r$ is a \emph{BFS spanning tree} if $d_G(v,r)=d_T(v,r)$ for every vertex in $v$.  A \emph{BFS subtree} is a subtree of a BFS spanning tree that includes the root. A \emph{leaf} in a rooted tree is a non-root vertex of degree 1. We will be interested in decompositions where each $H_i$ is induced by a BFS subtree of $G[H_{\geq{i}}]$ with a bounded number of leaves. Since every such subtree is the union of a bounded number of isometric paths, Lemma \ref{naivebound} allows us to bound the number of vertices of $H_i$ that can be reached from some other fixed vertex.




The following theorem is proved in \cite{vandenHeuveletal2017}.

\begin{lemma}[Van den Heuvel et al. \cite{vandenHeuveletal2017}]\label{lemma:decompositionContraction}
Let $G$ be a graph and let $\mathcal{H}=\{H_1,...,H_s\}$ be a connected decomposition of $G$ of width at most $t$. Let $H$ be the graph obtained by contracting each subgraph $H_i$ to a single vertex. Then $H$ has treewidth at most $t$.
\end{lemma}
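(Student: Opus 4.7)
The plan is to build an explicit tree decomposition of $H$ of width at most $t$ with bags indexed by $\{1,\ldots,s\}$. Order the vertices of $H$ as $h_1,\ldots,h_s$, where $h_i$ is the vertex obtained by contracting $H_i$. For each $i$, let $C_i^{*}$ denote the component of $G[H_{\ge i}]$ that contains $H_i$ (well-defined and unique since each $H_i$ is connected), and set
$$B_i := \{h_i\} \cup \{h_k : k < i \text{ and } H_k \text{ is connected to } C_i^{*} \text{ in } G\}.$$
Because $C_i^{*}$ is a component of $G[H_{\ge (i-1)+1}]$, the width hypothesis applied at index $i-1$ gives $|B_i \setminus \{h_i\}| \le w_{i-1}(\mathcal{H}) \le t$, so $|B_i| \le t+1$.

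Next I would organise the bags into a tree. For each $i \ge 2$ with $B_i \ne \{h_i\}$, let $p(i) := \max\{k < i : h_k \in B_i\}$ and declare $B_{p(i)}$ to be the parent of $B_i$; indices $i$ with $B_i = \{h_i\}$ become roots of the resulting forest (these are precisely the smallest indices within each component of $G$, and the forest can be joined by a dummy node with empty bag if one insists on a tree). Vertex coverage is immediate from $h_i \in B_i$. For edge coverage, an edge $h_j h_k \in E(H)$ with $j<k$ records an edge of $G$ between $H_j$ and $H_k$; since $H_k \subseteq C_k^{*}$, this places $h_j \in B_k$, so both endpoints lie in the single bag $B_k$.

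The main obstacle is the subtree axiom: for every vertex $h_j$, the bags containing $h_j$ should induce a connected subtree. My plan is to show that if $i > j$ and $h_j \in B_i$, then $h_j \in B_{p(i)}$ as well, so that iterating the parent map yields a strictly decreasing chain of indices $\ge j$ that must terminate at $B_j$. The key step is the nesting $C_i^{*} \subseteq C_{p(i)}^{*}$: since $h_{p(i)} \in B_i$, the subgraph $H_{p(i)}$ is connected in $G$ to $C_i^{*}$, and as both $H_{p(i)}$ and $C_i^{*}$ lie inside $G[H_{\ge p(i)}]$, they belong to the same component of that graph, which is $C_{p(i)}^{*}$. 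Consequently any edge from $H_j$ into $C_i^{*}$ is also an edge from $H_j$ into $C_{p(i)}^{*}$, giving $h_j \in B_{p(i)}$. Iterating produces the required path of bags all containing $h_j$ and ending at $B_j$, completing the verification and establishing $\mathrm{tw}(H) \le t$.
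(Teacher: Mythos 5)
The paper cites this lemma from Van den Heuvel et al.\ \cite{vandenHeuveletal2017} and does not reproduce a proof, so there is nothing in this manuscript to compare against; that said, your argument is the natural one and, so far as I can check it against the definitions given in Section~\ref{sec:manybounds}, it is correct. The bags $B_i$ built from the component $C_i^*$ of $G[H_{\ge i}]$ containing $H_i$ are exactly what the separating-number / width machinery is designed to produce; the size bound $|B_i| \le t+1$ follows because $C_i^*$ is a component of $G[H_{\ge (i-1)+1}]$, so $|B_i\setminus\{h_i\}| = s_{i-1}(C_i^*) \le w_{i-1}(\mathcal{H}) \le t$; edge coverage is as you say; and the subtree condition reduces to the nesting $C_i^* \subseteq C_{p(i)}^*$, which you establish correctly (noting that $H_{p(i)}$ is connected to $C_i^*$, both live in $G[H_{\ge p(i)}]$, and the component of that graph containing $H_{p(i)}$ is by definition $C_{p(i)}^*$). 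Two small points worth making explicit if you write this up: (i) in the subtree step you should observe that $j\le p(i)$ always holds by maximality of $p(i)$, with the case $j=p(i)$ trivial, so the descending chain does reach $j$; and (ii) the parenthetical claim that roots are ``precisely the smallest indices within each component of $G$'' is true but not needed -- any index $i>1$ with $B_i=\{h_i\}$ has $C_i^*$ equal to a whole component of $G$ disjoint from $H_1,\dots,H_{i-1}$, but the forest-plus-dummy-node patch works regardless. Connectedness of each $H_i$ is used in two places you should flag: it makes $C_i^*$ well defined (a single component contains all of $H_i$), and it makes Lemma~\ref{lemma:decompositionContraction} about a genuine minor obtained by contraction.
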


Using this and our bounds for graphs with bounded treewidth, we now prove a lemma that will allow us to use known decompositions of graphs with excluded minors to obtain bounds on the generalised colouring numbers of these graphs.

\begin{lemma}\label{gcolBFS}
Let $k,\ell,p,t$ be positive integers with $k\le \ell$. Let $G$ be a graph that admits a connected decomposition $\mathcal{H}=\{H_1,...,H_s\}$ of width $t$ in which for every $1\le i \le s$ $H_i$ is induced by a BFS subtree with at most $p$ leaves in $G-(V(H_1)\cup\dots  \cup  V(H_{i-1}))$. Then  we have $$\gcol_{k,\ell}(G)\leq p\large\left(\binom{t+k}{t}-1\large\right)(2\ell +1) +p\ell+1$$.
\end{lemma}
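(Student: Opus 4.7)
The plan is to construct an ordering $\sigma$ of $V(G)$ realising the claimed bound. Let $H$ be the quotient of $G$ by $\mathcal{H}$, obtained by contracting each $H_i$ to a single vertex $v_i$. By Lemma~\ref{lemma:decompositionContraction}, $H$ has treewidth at most $t$, so $H$ is a subgraph of a $t$-tree $H^\star$ endowed with a simplicial ordering $L$. After reindexing the blocks so that $v_1 <_L v_2 <_L \cdots <_L v_s$, I would define $\sigma$ on $V(G)$ by placing the vertices of $V(H_1)$ first, then those of $V(H_2)$, and so on, and within each $V(H_i)$ by BFS distance from the root $r_i$ of the BFS subtree $H_i$, with ties broken suitably. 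Fixing an arbitrary $v \in V(H_i)$, I split $\GReach_{k,\ell}[G,\sigma,v]$ into its part in $V(H_i)$ and its part in the other blocks, and bound each.

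For the outside contribution, let $u \in V(H_j) \cap \GReach_{k,\ell}[G,\sigma,v]$ with $j < i$, witnessed by a path $P = x_0 x_1 \cdots x_s$ in $G$. Collapsing consecutive occurrences of the same block in the block-sequence of $P$ yields a walk $W$ in $H$ from $v_i$ to $v_j$ of length at most $\ell$. Mapping each left-to-right minimum of $W$ to the first position of $P$ entering that block gives an injection into the left-to-right minima of $P$, so $W$ has at most $k$ left-to-right minima with respect to $L$. Using the simplicial ordering of $H^\star$ together with Lemma~\ref{lemma:reachInfinity} (which identifies $\Reach_\infty$ with $\Reach_1$), the sub-walk between any two consecutive left-to-right minima of $W$ has every intermediate vertex exceeding the later minimum in $L$, so those two vertices must be directly adjacent in $H^\star$. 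Chaining $v_i$ with the left-to-right minima of $W$ therefore produces a path in $H^\star$ of length at most $k$ from $v_i$ to $v_j$ with $v_j$ as minimum, so $v_j \in \WReach_{k}[H^\star,L,v_i]$, a set of size at most $\binom{t+k}{t}$ by Lemma~\ref{lemma:grohe}. Removing $v_i$ itself, at most $\binom{t+k}{t}-1$ outside blocks contribute. For each such block $H_j$ the witnessing path lies inside $\mathcal{H}_j$ (since all internal vertices are $>_\sigma u$, hence in blocks of index $\ge j$), so $d_{\mathcal{H}_j}(v,u)\le\ell$; Lemma~\ref{naivebound} applied to each of the at most $p$ isometric root-to-leaf paths of $H_j$ in $\mathcal{H}_j$ then bounds the contribution by $p(2\ell+1)$ per block.

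For the inside contribution, a vertex $u\in V(H_i)$ in the reach other than $v$ is reached by a path that lies in $\mathcal{H}_i$ by the same argument, and satisfies $u<_\sigma v$, forcing $u$ to sit at strictly smaller BFS depth than $v$ in $H_i$ once the within-level tie-breaking is chosen carefully. Writing $d$ for the BFS depth of $v$, on each of the at most $p$ isometric root-to-leaf paths of $H_i$ in $\mathcal{H}_i$ Lemma~\ref{naivebound} allows only the positions in $[d-\ell,d-1]$ to lie within distance $\ell$ of $v$, yielding at most $\ell$ such vertices per path. Summing over the $p$ paths and adding $v$ itself gives the inside bound $p\ell+1$. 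Combining the outside and inside contributions then yields $|\GReach_{k,\ell}[G,\sigma,v]| \le p\bigl(\binom{t+k}{t}-1\bigr)(2\ell+1)+p\ell+1$, as required.

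The main obstacle is the outside step: translating the path $P$ in $G$ with its budget of $k$ left-to-right minima into a genuine weak $k$-reachability statement in the quotient. The block walk $W$ may have repeated blocks, and naive shortcutting to a path in $H$ can \emph{inflate} the number of left-to-right minima. The remedy is precisely the combination of the simplicial structure of $H^\star$ and Lemma~\ref{lemma:reachInfinity}: the identification $\Reach_\infty=\Reach_1$ lets one replace the sub-walk between consecutive left-to-right minima by a single edge of $H^\star$, producing a length-$k$ path through the left-to-right minima without losing control of the count.
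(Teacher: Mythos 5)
Your overall strategy is the same as the paper's: contract the blocks, invoke Lemma~\ref{lemma:decompositionContraction} together with the treewidth bound for the quotient, refine the block order by BFS depth inside each block, and apply Lemma~\ref{naivebound} path by path. Your treatment of the projection step (collapsing the block sequence of a witnessing path to a walk in $H$, injecting its left-to-right minima into those of the original path, and shortcutting between consecutive minima via Lemma~\ref{lemma:reachInfinity}) is in fact more explicit than what the paper writes. There is, however, a concrete gap in how you set up the ordering: you take an arbitrary $t$-tree completion $H^\star$ with an arbitrary simplicial ordering $L$ and then ``reindex the blocks'' to match $L$. The hypothesis of the lemma is tied to the \emph{given} index order: $H_j$ is induced by a BFS subtree of, and its root-to-leaf paths are isometric in, $\mathcal{H}_j=G-(V(H_1)\cup\dots\cup V(H_{j-1}))$ for the original indices. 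After reindexing, the set of blocks of (new) index at least $j$ is in general a different vertex set, so the step where you conclude that the witnessing path lies inside $\mathcal{H}_j$, hence $d_{\mathcal{H}_j}(v,u)\le\ell$ and Lemma~\ref{naivebound} applies to the isometric root-to-leaf paths of $H_j$, is not justified. The repair goes in the opposite direction: the width-$t$ condition makes the original index order an elimination order of width $t$ for the quotient (this is exactly how the tree decomposition in Lemma~\ref{lemma:decompositionContraction} is built), so one may choose $H^\star$ and $L$ so that $L$ \emph{is} the index order; with that choice your argument goes through and no reindexing is needed.

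A smaller point: your claim that the within-level tie-breaking can be chosen so that every reachable $u\in V(H_i)\setminus\{v\}$ has strictly smaller BFS depth than $v$ cannot hold for all $v$ simultaneously. Whatever linear extension of the depth preorder you fix, some $v$ has a same-depth vertex $u<_\sigma v$, and such a $u$ can be $(k,\ell)$-reachable (if $uv\in E(G)$ the single edge is already a witness). This only affects the additive term: per root-to-leaf path the reachable depths lie in $[d-\ell,d]$ rather than $[d-\ell,d-1]$, giving $p(\ell+1)$ in place of $p\ell+1$ for the home block. The paper's own proof, which orders by decreasing rather than increasing depth (the two conventions are interchangeable here), glosses over the same point, so I flag it only to note that the stated additive constant needs either a sharper argument or a small adjustment.
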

\begin{proof}
The proof is similar to that of \cite[Lemma 3.5]{vandenHeuveletal2017}.
Let $H$ be the graph obtained by contracting the subgraphs $H_i$ in $G$.  We identify the vertices of $H$ with the subgraphs $H_i$. Since $\mathcal{H}$ is connected we have by Lemma \ref{lemma:decompositionContraction} that $H$ has treewidth at most $t$. By our bounds for graphs with bounded treewidth we have $\gcol_{k,\ell}(H)\leq\binom{t+k}{t}$, so there exist a linear ordering $L$ on $V(H)$ such that for every $H_i\in V(H)$ $|\GReach_{k,\ell}[H,L,H_i]|\leq\binom{t+k}{t}$.

From $L$ we define an ordering $L'$ on $V(G)$. For $u\in H_i$ and $v\in H_j$, with $i\neq j$, we let $u<_{L'}v$ if $H_{i}<_{L}H_{j}$. And for every $1\leq i\leq s$ we order the vertices of $H_i$ in such a way that $u<_{L'} v$ if $d_{H_i}(r_i,u)>d_{H_i}(r_i,v)$, where $r_i$ is the root of the BFS subtree that induces $H_i$.

Note that every vertex $v\in H_i$ satisfies
$$\GReach_{k,\ell}[G,L',v]\subseteq N^{\ell}[v]\cap\{H_j|H_j\in\GReach_{k,\ell}[H,L,H_i]\}$$

Hence, we have that there are at most $\binom{t+k}{t}$ subgraphs among $H_1,...,H_s$ in $G$ that contain vertices from $\GReach_{k,\ell}[G,L',v]$. Since each such subgraph $H_i$ is the union of at most $p$ isometric paths, by Lemma~\ref{naivebound}, we get that $|N^{\ell}[v]\cap V(H_i)|\le p(2\ell+1)$. Moreover, if $H_i$ contains $v$ then it is not hard to see that, by construction of $L'$, $|\GReach_{k,\ell}[G,L',v]\cap H_i|\le p\ell+1$. The result follows.
\end{proof}



Now we are ready to reap the remaining results of this section. We start with graphs with bounded genus. The proof of this bound is similar to that of \cite[Theorem 1.6]{vandenHeuveletal2017}.
Since the isometric path decompositions guaranteed by Lemma~\ref{triangulated} are of width 2, and since the generalised colouring numbers cannot decrease if we add edges, Lemma~\ref{gcolBFS} gives us the bound when the genus is $g=0$. (Note that every subgraph of an isometric-path decomposition has two leaves, so we would be first inclined to use $p=2$ in Lemma~\ref{gcolBFS}. But in the proof of Lemma~\ref{gcolBFS} the relevant thing is that every subgraph of the decomposition is the union of at most $p$ isometric paths. Thus for isometric paths decompositions we can use $p=1$.) Now suppose $G$ is a graph with genus $g\ge 1$. Such a graph contains an non-separating cycle $C$ that consists of two isometric paths and such that $G-C$ has genus at most $g-1$  \cite[page 111]{MT01} . We take the vertices of one such a cycle and start constructing a linear $L$ ordering of $V(G)$ by placing these vertices first. If after removing this cycle, the graph obtained has positive genus, we take a cycle of this type, remove it and put its vertices next in the ordering. We proceed like this inductively until we arrive at a planar graph $G'$. We then order the vertices of $G'$ in a way that it satisfies the bound  for $g=0$. The bound on $\gcol_{k,\ell}(G)$ now follows easily from Lemma \ref{naivebound}.

The following is proved in \cite{vandenHeuveletal2017} and together with Lemma \ref{gcolBFS} directly implies the bounds for graph excluding $K_t$ as a minor.

\begin{lemma}[Van den Heuvel et al. \cite{vandenHeuveletal2017}]\label{Kminor}
Every $K_t$-minor free graph $G$ has a connected partition $H_1,\dots, H_s$ with width at most $t-2$,  where each $H_i$ is induced by a BFS subtree of $G-(V(H_1)\cup \dots \cup V(H_{i-1}))$ with at most $t-3$ leaves.
\end{lemma}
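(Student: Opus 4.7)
The plan is to construct the decomposition $H_1,\ldots,H_s$ greedily, maintaining as an invariant that every component of the remaining graph $G_i := G-\bigcup_{j<i}V(H_j)$ is adjacent (in $G$) to at most $t-2$ previously-chosen pieces. This invariant holds trivially at $i=1$, and it is exactly the width condition we want on $\mathcal{H}$.

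Assume $H_1,\ldots,H_{i-1}$ have been chosen. Fix a component $C$ of $G_i$, and let $P\subseteq\{H_1,\ldots,H_{i-1}\}$ be the set of pieces adjacent to $C$, so $|P|\le t-2$ by the invariant. Pick a root $r\in V(C)$ with a neighbor in some piece of $P$ (or any $r\in V(C)$ if $P=\emptyset$) and compute a BFS tree $T$ of $C$ rooted at $r$. I then build a BFS subtree $S\subseteq T$ iteratively: start with $S=\{r\}$, and while there is a component $D$ of $C-V(S)$ that is adjacent to more than $t-2$ pieces among $\{H_1,\ldots,H_{i-1}\}\cup\{C[V(S)]\}$, extend $S$ by the shortest path in $T$ from $V(S)$ into $V(D)$. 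Each such extension is a single arm of the BFS tree and adds exactly one new leaf to $S$. When no offending component remains, set $H_i:=C[V(S)]$; by construction the invariant is restored for stage $i+1$.

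The main obstacle is to show that $S$ never needs more than $t-3$ leaves. I argue by contradiction: suppose $S$ currently has $t-3$ leaves and a further offending component $D$ forces us to add a $(t-2)$-th arm. I claim one can then exhibit a $K_t$-minor of $G$, contradicting the hypothesis. The branch sets are: the $t-3$ arms of $S$ emanating from $r$ (pairwise vertex-disjoint except at $r$, which is absorbed into one of them), the component $D$, and the $t-2$ pieces in $P$; since in fact the offending property for $D$ would force $|P|+(\text{leaves})\ge t-1$, careful bookkeeping yields $t$ branch sets in total. Pairwise adjacency follows because each arm was grown in $T$ precisely to intercept $D$ (so each arm has a vertex adjacent to $D$), the arms all meet at $r$ (which has a neighbor in some piece of $P$), each piece in $P$ has a neighbor in $C$ and hence can be linked to the appropriate arm, and $D$ is adjacent to every piece in $P$ by the very condition that made it offending. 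Assembling these branches yields a $K_t$-minor, contradicting that $G$ is $K_t$-minor-free.

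The delicate step is the adjacency verification in the minor argument: one must choose the extensions as shortest paths in $T$ (to guarantee that distinct arms are internally disjoint) and track carefully how $D$, the arms, and the external pieces in $P$ are pairwise linked. Once the bound of $t-3$ leaves is established, the process terminates with $H_i$ being a BFS subtree of $G_i$ with at most $t-3$ leaves and the invariant restored; iterating exhausts $V(G)$ and delivers the required connected partition of width at most $t-2$.
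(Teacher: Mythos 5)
There is a genuine gap, and it sits exactly where you flagged it: the adjacency verification in your $K_t$-minor construction does not go through under the invariant you maintain. You only require that each surviving component sees at most $t-2$ earlier pieces; you do not know that the pieces in $P$ are pairwise adjacent, that the arms of $S$ are adjacent to the pieces in $P$, or that each arm is adjacent to $D$. Indeed, arm number $j$ was added to intercept the component that was offending at step $j$ of your inner loop, and the final offending component $D$ need not be contained in that component (components split and shrink as $S$ grows), so an earlier arm need not meet $D$ at all. Likewise two arms made disjoint by assigning their common prefix to one of them need not be adjacent to each other away from $r$, and nothing forces an arm to touch any piece of $P$. So the branch sets you list are not pairwise adjacent, and no $K_t$-minor is exhibited; without that, there is no bound on the number of arms and your inner loop need not terminate with $t-3$ leaves (or at all).

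The missing idea in the proof of Van den Heuvel et al.\ is a \emph{stronger} invariant: for every component $C$ of the remaining graph, the pieces adjacent to $C$ are pairwise adjacent in $G$ (so that together with $C$ they already form a $K_{m+1}$-minor, whence $m\le t-2$). With this invariant the construction is one-shot rather than iterative: root the BFS tree of $C$ at a vertex $r$ adjacent to one of the $m$ seen pieces, pick for each of the other $m-1\le t-3$ pieces a vertex of $C$ adjacent to it, and let $H_i$ be the union of the BFS-tree paths from $r$ to these vertices --- a BFS subtree with at most $t-3$ leaves that is adjacent to \emph{every} seen piece. The invariant is then restored automatically: a new component $D$ of $C-V(H_i)$ sees only pieces among the old $m$ plus $H_i$, all of which are now pairwise adjacent, so if $D$ saw all $m+1=t-1$ of them then $D$ together with them would model $K_t$; hence $D$ sees at most $t-2$ pieces. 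Your weaker invariant forfeits exactly the pairwise adjacencies that make this contradiction (and hence both the width bound and the leaf bound) work.
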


The following lemmas are proved in \cite{vandenHeuvelWood2018} and together with Lemma \ref{gcolBFS} (or, in the last case, the arguments from the proof of this lemma) imply the bounds for graphs excluding $K^*_{2,t}$, $K^*_{3,t}$ or $K^*_{s,t}$ as a minor, respectively.

\begin{lemma}[Van den Heuvel and Wood \cite{vandenHeuvelWood2018}]
Every $K^*_{2,t}$-minor free graph $G$ has a connected partition $H_1,\dots, H_s$ with width $1$,  where each $H_i$ is induced by a BFS subtree of $G-(V(H_1)\cup \dots \cup V(H_{i-1}))$ with at most $t-1$ leaves.
\end{lemma}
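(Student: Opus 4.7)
The plan is to construct the decomposition greedily, maintaining as an invariant that the partial decomposition $H_1,\dots,H_{i-1}$ has width $1$. Under this invariant, every component $C$ of the remaining graph $G^{(i)}=G-(V(H_1)\cup\dots\cup V(H_{i-1}))$ is adjacent to at most one previously chosen piece $H_j$; write $N\subseteq V(C)$ for the (non-empty, if $j$ exists) set of vertices of $C$ incident to $V(H_j)$. I would process one such component at a time, constructing $H_i$ inside $C$ as a BFS subtree of $C$ rooted at some $r\in N$ (or any vertex if no previous piece exists).

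To preserve the width-$1$ invariant one step further, I would choose $H_i$ so that (i) $N\subseteq V(H_i)$, and (ii) $V(H_i)$ is closed under taking ancestors in the BFS tree of $C$ rooted at $r$. Under (i), every edge from $C$ to $V(H_j)$ already lies in $V(H_i)$, and under (ii), each residual component of $C\setminus V(H_i)$ is connected to $V(H_i)$ but to no other previously chosen piece, maintaining width $1$. Such an $H_i$ exists and can be realised as an honest BFS subtree, possibly after re-rooting to place $N$ near the root, or equivalently after treating $N$ as a ``super-root'' via contraction.

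The main technical step is to further arrange that $H_i$ has at most $t-1$ leaves. I would take $H_i$ to be an inclusion-minimal subtree subject to (i) and (ii), and argue by contradiction: if $H_i$ had $t$ leaves $\ell_1,\dots,\ell_t$, then the root-to-leaf paths in $H_i$ share a common prefix down to a deepest common ancestor $v$ of all $t$ leaves, from which the tree branches into $t$ pairwise-disjoint subtrees $T_1,\dots,T_t$, each containing one leaf. Taking $v$ together with a well-chosen neighbour $u$ (the BFS parent of $v$, or a vertex of $H_j$ adjacent to $v$ when $v=r$), and contracting each $T_i$ to a single branch set, produces $t$ independent branch sets each adjacent to both $u$ and $v$, while $uv$ is an edge; this realises a $K^*_{2,t}$ minor in $G$, contradicting the hypothesis. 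If the branching is more intricate, with multiple branching vertices at different depths, one reduces to this star-like situation by contracting the internal degree-$2$ paths of the BFS subtree and induction on the branching structure.

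The main obstacle I foresee is the compatibility between BFS-closedness (needed for width $1$) and the minimality used to force the $t$ leaves in the contradiction: the minimality must be expressed so that deleting a terminal branch still yields a valid BFS subtree covering $N$, which requires some care when the leaves of $H_i$ are in $N$ itself. A second delicate point is the base case $i=1$, where there is no previous piece to supply the partner vertex $u$ in the $K^*_{2,t}$ minor; there one can choose the initial root $r$ so that an adjacent pair near the root of the BFS tree plays the role of the $K_2$ side of the forbidden minor.
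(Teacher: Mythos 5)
The paper states this lemma only as a citation to Van den Heuvel and Wood and gives no proof, so I assess your argument on its own terms. Your overall framework matches the standard approach: process one component $C$ of the remaining graph at a time, root a BFS tree of $C$ at a vertex $r$ of the attachment set $N$ (the vertices of $C$ with a neighbour in the unique previous piece $H_j$), and take $H_i$ to be the minimal ancestor-closed subtree covering $N$, i.e.\ the union of the paths from $r$ to $N$ in the BFS tree. Since $N\subseteq V(H_i)$, every residual component of $C-V(H_i)$ attaches only to $H_i$, so width $1$ is preserved. That part is fine.

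The leaf-bounding step, however, has a genuine gap. You take $v$ to be the deepest common ancestor of the $t$ leaves and assert that $H_i$ branches from $v$ into $t$ pairwise-disjoint subtrees $T_1,\dots,T_t$ --- but a tree with $t$ leaves need not branch $t$ ways at any single vertex (consider a balanced binary tree), and contracting degree-two paths does not repair this. More seriously, even in the broom-shaped case the proposed branch sets $T_1,\dots,T_t$ are adjacent to $v$ but not, in general, to your partner vertex $u$ (the BFS parent of $v$), so no $K^*_{2,t}$ model is actually produced. The fact you flag as a mere obstacle --- that the leaves of $H_i$ lie in $N$ --- is in fact the missing key. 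Because $H_i$ is the union of paths from $r$ to $N$, its leaves are exactly the ancestor-maximal elements of $N$, and each is adjacent to $H_j$. If $H_i$ had $t$ leaves $n_1,\dots,n_t$, set $A=V(H_j)$ and $B=V(H_i)\setminus\{n_1,\dots,n_t\}$; then $B$ is a non-empty connected subtree containing the root $r$, so $A$ and $B$ are adjacent because $r\in N$, and each singleton $\{n_i\}$ is adjacent to its BFS parent in $B$ and, being in $N$, to $A$. The branch sets $A,B,\{n_1\},\dots,\{n_t\}$ thus realise a $K^*_{2,t}$ minor, which is the contradiction you were after. For $i=1$ there is nothing to prove: take $H_1=\{r\}$, which has no leaves.
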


\begin{lemma}[Van den Heuvel and Wood \cite{vandenHeuvelWood2018}]
Every $K^*_{3,t}$-minor free graph $G$ has a connected partition $H_1,\dots, H_s$ with width $2$,  where each $H_i$ is induced by a BFS subtree of $G-(V(H_1)\cup \dots \cup V(H_{i-1}))$ with at most $2t+1$ leaves.
\end{lemma}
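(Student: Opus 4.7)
My plan is to prove the lemma by induction on $|V(G)|$, constructing the first part $H_1$ explicitly and then applying the inductive hypothesis to each component of $G - V(H_1)$. The concatenation yields a partition of $G$; the width-$2$ condition is preserved because each component of $G - V(H_1)$ is $K^*_{3,t}$-minor free and is decomposed independently, and its interior parts pick up no further attachment points beyond the (at most $2$) they inherit in $H_1$.

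To construct $H_1$, fix any vertex $r \in V(G)$ and a BFS tree $T$ of $G$ rooted at $r$. Start with $H_1$ being a single root-to-leaf path of $T$, and iteratively enlarge $H_1$ as follows: while there is a component $C$ of $G - V(H_1)$ whose neighbours in $H_1$ lie in three or more distinct \emph{parts} of $H_1$ (a part being a maximal path-segment of $H_1$ between consecutive branching vertices and leaves), add to $H_1$ the $T$-path from $r$ to some well-chosen vertex whose inclusion reduces the triple contact. Since $H_1$ strictly grows and $G$ is finite, the procedure terminates with a BFS subtree $H_1$ such that every component of $G - V(H_1)$ has neighbours in at most two parts of $H_1$; this is precisely the width-$2$ condition at the first level of the decomposition.

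The main obstacle, and the step that uses $K^*_{3,t}$-minor freeness, is proving that $H_1$ has at most $2t+1$ leaves. I would argue by contradiction: if $H_1$ has at least $2t+2$ leaves at termination, each pair of ``extra'' leaves can be traced back to a distinct greedy enlargement, so the procedure furnishes $t$ pairwise-disjoint witness components $C_1, \dots, C_t$ in (successive iterates of) $G - V(H_1)$, each attached to three distinct parts of $H_1$. With a careful extremal choice of $r$ and of the order of enlargements, one arranges that all the $C_i$ share (after appropriate contractions inside $H_1$) a common triple of attachment regions $A_1, A_2, A_3 \subseteq V(H_1)$, each $A_j$ being a connected subtree of $H_1$, pairwise adjacent via the BFS structure. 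Then $A_1,A_2,A_3$ form the apex triangle and the $C_i$ form the $t$ independent vertices of a $K^*_{3,t}$-minor in $G$, contradicting the hypothesis.

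The delicate point is ensuring that the apex branch-sets can be taken pairwise adjacent and that the $t$ witness components stay pairwise vertex-disjoint; I expect this to require a nontrivial global accounting of the greedy construction, a breadth-first rerouting argument to match the witnesses to a common triple of attachment regions, and possibly a more refined definition of ``part'' to make the bound $2t+1$ (rather than $3t$) tight.
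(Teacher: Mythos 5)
This lemma is not proved in the paper you were given; it is imported verbatim from Van den Heuvel and Wood \cite{vandenHeuvelWood2018} and simply cited. So there is no ``paper's own proof'' to compare against, but your reconstruction has several genuine flaws worth flagging.

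The most serious issue is a misreading of ``width.'' In the paper's definition, a decomposition $\{H_1,\dots,H_s\}$ has width $2$ if for every $i$ and every component $C$ of $G-(V(H_1)\cup\cdots\cup V(H_i))$, the component $C$ is adjacent to at most $2$ of the \emph{subgraphs} $H_1,\dots,H_i$. Each $H_j$ counts as one unit; the ``maximal path-segments between branching vertices and leaves of $H_1$'' that you introduce have no role in this condition. In particular, after placing $H_1$, every component of $G-V(H_1)$ is trivially adjacent to at most one earlier part (namely $H_1$), so your greedy enlargement of $H_1$ to avoid ``triple contact with segments'' is not aimed at the right target. The nontrivial width constraint bites only once $H_2,H_3,\dots$ are placed inside a component $C$: a later component of $C-V(H_2)-\cdots$ could be adjacent to $H_1$ \emph{and} $H_2$ \emph{and} $H_3$, and that is what width $2$ forbids.

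This points at a second gap in the induction. You apply the inductive hypothesis to each component of $G-V(H_1)$ ``independently,'' as a stand-alone graph. But the width condition is cumulative: the parts built inside a component $C$ must be assessed for attachment to $H_1$ as well as to each other. The inductive statement therefore has to carry along the set of earlier connected parts (here, at most two) that $C$ is already attached to, and the recursive choice of a new BFS subtree inside $C$ must be constrained to respect this external attachment. Simply recursing on $C$ in isolation throws this information away, and the width bound can fail.

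Finally, your construction of the forbidden $K^*_{3,t}$-minor cannot work as stated. You propose apex branch sets $A_1,A_2,A_3$ that are connected subtrees of $H_1$ and ``pairwise adjacent via the BFS structure.'' But $H_1$ is a tree, and a tree has no $K_3$ minor, so three pairwise adjacent connected subtrees of $H_1$ do not exist if adjacency is taken inside $H_1$. The pairwise adjacency would have to come from edges of $G$ outside $H_1$, but then you need to certify it — and in the actual argument the natural candidates for the triangle branch sets are not all inside one $H_i$; they span the (at most two) earlier parts that the current component is attached to, together with a piece of the subtree being built. Your setup never tracks those earlier parts, so you have no handle on them. I would recommend reformulating the induction so that a component $C$ is processed together with the explicit list of at most two earlier connected parts it touches, choosing the next BFS subtree inside $C$ so as to hit these attachments, and then bounding leaves by noting that too many leaves yields $t$ pairwise disjoint connected pieces, each adjacent to the two earlier parts and to the current subtree, giving the $K^*_{3,t}$-minor.
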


\begin{lemma}[Van den Heuvel and Wood \cite{vandenHeuvelWood2018}]
Every $K^*_{s,t}$-minor free graph $G$ has a connected partition $H_1,\dots, H_s$ with width $s$ in which for every $1\le i \le r$ $V(H_i)=V(P_{i,1}\cup\dots\cup V(P_{i,p_i}))$, where $p_i\le s(t-1)$ and each $P_{i,j}$ is an isometric path in $G-((V(H_1)\cup\dots V(H_{i-1}))\cup(V(P_{i,1})\cup\dots\cup V(P_{i,j-i})))$.
\end{lemma}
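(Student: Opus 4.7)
The plan is to mimic the inductive constructions used for the $K^*_{2,t}$ and $K^*_{3,t}$ cases, building the partition one piece at a time. The invariant to maintain is that whenever we pick a component $C$ of $G-(V(H_1)\cup\dots\cup V(H_{i-1}))$, at most $s$ of the previously chosen parts $H_{j_1},\dots,H_{j_q}$ are adjacent to $C$; this is exactly the width condition. Then I would choose $H_i\subseteq V(C)$, connected, as a union of isometric paths in $C$, so that every new component of $C-V(H_i)$ still satisfies the invariant against the enlarged collection $\{H_1,\dots,H_i\}$.

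To construct $H_i$, I would pick a root $r\in V(C)$ that is adjacent to one of the previous parts (if any), and grow $H_i$ greedily by adding isometric paths $P_{i,1},P_{i,2},\dots$ in $C$, each of them starting at $r$ (or more generally at $P_{i,1}\cup\dots\cup P_{i,j-1}$, to keep $H_i$ connected) and ending at a carefully chosen vertex $v_j$ whose role is to block the formation of a $K^*_{s,t}$ minor later. Concretely, for each of the at most $s$ adjacent previous parts $H_{j_\alpha}$, I would allot at most $t-1$ isometric paths devoted to witnessing and separating the vertices of $C$ that see $H_{j_\alpha}$; this gives the bound $p_i\le s(t-1)$.

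The key minor-finding step, which controls both the width and the leaf count, is a contradiction argument: suppose either more than $s$ previous parts were adjacent to some residual component, or more than $t-1$ isometric paths were truly needed to handle one of them. Contracting each previous part $H_{j_\alpha}$ to a single branch set (they are connected by hypothesis, and mutual adjacency among the needed $s$ of them can be routed through $C$ using the isometric path structure and the BFS origin of the earlier parts) yields the $K_s$ side of $K^*_{s,t}$; the $t$ independent branch sets come from vertices of $C$ whose distinct "types" of neighborhoods into the previous parts, uncoverable by fewer than $s(t-1)$ isometric paths, force $t$ vertex-disjoint connected pieces in $C$ each attached to all $s$ contracted parts. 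Isometricity of the $P_{i,j}$ in the relevant residual graph keeps the branch sets pairwise disjoint and small.

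The main obstacle will be arranging the $s+t$ branch sets of the hypothetical minor so as to be simultaneously connected, pairwise disjoint, and to realize the required complete adjacencies between the $s$ clique branch sets and each of the $t$ independent branch sets, since the $s$ previous parts need not be mutually adjacent a priori and must be linked through $C$ without consuming vertices that the $t$ independent branch sets require. Handling this uniformly for arbitrary $s$, rather than the ad hoc $s\in\{2,3\}$ tricks, is where the bookkeeping becomes delicate; I would also have to verify the invariant is preserved after taking $H_i$, i.e., that no new residual component sees more than $s$ chosen parts, which is itself a smaller instance of the same minor-finding argument.
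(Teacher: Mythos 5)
First, note that the paper does not prove this lemma at all: it is imported verbatim from Van den Heuvel and Wood \cite{vandenHeuvelWood2018} and used as a black box together with Lemma~\ref{gcolBFS}. So the relevant comparison is with the proof in that reference, whose overall shape your sketch does capture: an iterative construction of connected parts, a width invariant maintained for the components of the residual graph, each part assembled from at most $s(t-1)$ paths that are isometric in successively reduced graphs, and a contradiction argument that extracts a $K^*_{s,t}$ minor if the invariant or the path count would be violated.

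However, your proposal has a genuine gap precisely where the lemma has its content: the minor-finding step is never actually carried out. You correctly identify that one must produce $s$ pairwise-adjacent connected branch sets (each also complete to $t$ further disjoint connected sets inside the current component $C$), and you correctly observe that the previously chosen parts need not be mutually adjacent and must be linked through $C$ without consuming the vertices needed for the $t$ independent branch sets --- but you then defer exactly this point as ``the main obstacle.'' Without it, neither the width bound $s$ nor the bound $p_i\le s(t-1)$ is established; the accounting ``$t-1$ paths per adjacent previous part'' is only a heuristic until one shows that a $t$-th unhandled attachment vertex genuinely forces a $t$-th independent branch set attached to all $s$ clique branch sets. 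A second, smaller issue: the statement requires each $P_{i,j}$ to be isometric in $G$ minus \emph{both} the earlier parts \emph{and} the earlier paths $P_{i,1},\dots,P_{i,j-1}$ of the same part (this nested isometricity is what Lemma~\ref{naivebound} needs when the decomposition is consumed in Lemma~\ref{gcolBFS}); your plan of growing all paths from a common root $r$ in $C$ does not address why isometricity survives the removal of the previously laid paths, nor how connectivity of $H_i$ is retained once paths are forced to avoid one another. As it stands the proposal is a plausible plan, not a proof.
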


\section{Approximation algorithms for the subchromatic number of powers of planar graphs}\label{sec:approx}

In this section we give a $2$-approximation for computing the subchromatic number of graph classes with algorithmically bounded layered cliquewidth, which in particular will give a $2$-approximation for computing the subchromatic number of powers of planar graphs, even if we are not given the underlying planar graph. For clarity, we first include a proof that powers of planar graphs have algorithmically bounded layered cliquewidth. Of course, this works in more general graph classes as well.

\begin{lemma}\label{lem:clarify}
Let $d$ be a fixed positive integer, and let $G$ be a connected planar graph. Then there exists $c=c(d)$ such that for any spanning tree $T$ of $G^{d}$, where $(L_{1},\ldots,L_{t})$ is the associated layering with $L_{1} = \{r\}$, we have that $G^{d}[L_{i}]$ has cliquewidth at most $c$. In particular, this layering is computable in polynomial time.
\end{lemma}

\begin{proof}
Let $T'$ be a spanning tree of $G$ and let $(L_{1}',\ldots,L_{q}')$ be the associated layering where $L_{1}' = \{r\}$. By combining Lemma 6 and Theorem 11 of \cite{productstructuretheorem}, there exists a tree decomposition $(T',\beta)$ of $G$ for which each bag intersects $L'_{i}$ in at most $9$ vertices. Thus for any set of $d$ layers, any bag intersects these layers in at most $9d$ vertices. Thus if $H$ is an induced subgraph of any $d$ consectutive layers has treewidth at most $9d$. Observe that in the layering of $G^{d}$, a layer $L_{i}$ is the $d^{th}$ power of at most $d$ consecutive layers of $(L_{1}',\ldots,L_{q}')$. Thus $L_{i}$ is the power of a graph with  treewidth at most $9d$, and thus has  cliquewidth at most $c(d):= 2(d+1)^{9d+1}$ \cite{GURSKI2009583} . As $L_{i}$ is an arbitrarily layer, this implies that every layer of $G^{d}$ has  cliquewidth at most $c(d)$. 
\end{proof}



\begin{proof}[Proof of Theorem \ref{thm:approximationalgo}]
Let $G \in \mathcal{C}$, and without loss of generality we will assume that $G$ is connected, as otherwise we simply apply the algorithm to each connected component.  Let $(L_{1},\ldots,L_{t})$ be a layering of $G$ such that each layer has bounded cliquewidth, which exists as $G$ has bounded layered cliquewidth, and further by the assumption, can be computed in polynomial time.
Observe that we can check if a graph $H$ has subchromatic number at most $t$ by checking the following $\text{MSO}_{1}$-formula:
\[
\begin{split}
\exists V_1,\dots,V_t\quad &\Bigl(\forall v\ \bigvee_{i=1}^t v\in V_k\Bigr)\\
&\wedge\bigwedge_{i=1}^t\Bigl(\forall u,v,w\ \bigl(u\in V_i\wedge v\in V_i\wedge w\in V_i\wedge E(u,v)\wedge E(v,w)\bigr)\rightarrow E(u,w)\Bigr).
\end{split}
\]
By \cite{COURCELLE20031}, this formula can be checked in polynomial time on graphs of bounded cliquewidth. Therefore, we can determine exactly the subchromatic number of the graph $L_{i}$ for all $i \in \{1,\ldots,k\}$. Let $t$ be the maximum subchromatic number in the layers. Then by using a set of $t$ colours on odd layers, and a set of $t$ different colours on even layers, as there is no edge between two layers of the same parity, this gives a $2t$-subcolouring of $G$, and hence a $2$-approximation for the subchromatic number of $G$. 
\end{proof}

We note that this algorithm can in fact find a subcolouring that uses at most twice the optimal number of parts. Indeed, we can put $t$ marks (i.e. unary predicates) $A_1,\dots,A_t$ on the vertices of $G$ and test whether a layer admits a subcolouring with $t$ colours extending $A_1,\dots,A_t$ using the algorithm in \cite{COURCELLE20031}. Doing this on each layer means we can find a subcolouring using $t$-colours, and thus find a $2t$-subcolouring of $G$.

 For our algorithm we used that powers of planar graphs have algorithmically bounded layered cliquewidth. 
We note that this algorithm works more generally than just for the subchromatic number as all we needed was that layers of the BFS tree had bounded cliquewidth, and that we could compute exactly the subchromatic number on graphs of bounded cliquewidth. For example, consider the \emph{$c$-chromatic number} of a graph, which is the minimum size of a vertex partition with the property that every class induces a cograph \cite{Gimbel20103437}. It is known that every first-order transduction of a class with bounded expansion has bounded $c$-chromatic number \cite{SBETOCL} (and refer to \cite{SBETOCL} for definitions of first-order transductions). The exact same argument in Theorem \ref{thm:approximationalgo} shows that there exists a $2$-approximation to the $c$-chromatic number assuming our class is a strongly local transduction of a bounded expansion class with bounded layered treewidth and such that the bounded cliquewidth layering is computable in polynomial time (see \cite{NesetrilMS22} for formal definition of strongly local transduction). Even more generally, the exact same algorithm can compute a $(p+1)$-low-shrubdepth  cover (see \cite{SBETOCL} for a definition) of such graph classes with parameter $p$ in polynomial time, which gives a partial answer to the question in \cite{SBETOCL}. We note that
by a small extension of the result on neighborhood covers presented in \cite{arxiv.2302.03527}, we can compute in 
$O(n^{9.8})$-time an $O((\log n)^2)$-subcolouring of $G^d$, when $G$ is a graph of order $n$ in a fixed class with bounded expansion, or an $O(n^\epsilon)$-subcolouring of $G^d$ for $G$ of order $n$ in a fixed nowhere dense class.

\section{Open problems}\label{sec:fur}


The bounds obtained in \cite{vandenHeuveletal2017} for the $\wcol_\ell(G)$ when $G$ is planar or excludes a fixed minor are polynomial in $\ell$ (see Table \ref{tab:weak}), and thus imply polynomial bounds on $\col_\ell(G)$ for these graphs. Joret and Wood (see \cite{esperetraymond2018}), asked if every graph class with polynomial upper bounds on the strong colouring numbers also has polynomial bounds on the weak colouring numbers. This turns out not to be the case as shown by Grohe, Kreutzer, Rabinovich, Siebertz and Stavropoulos \cite{grohe2018coloring} and by Dvo\v{r}\'ak, Pek\'arek, Ueckerdt, and Yuditsky \cite{DPUY}, where the second paper gives a more natural class of graphs not satisfying the property. We ask then, the following question. 

\begin{question}
 
 What is the largest $k=k(\ell)$ such that having polynomial bounds on the strong colouring numbers guarantees polynomial bounds on $\gcol_{k,\ell}$?
 \end{question}


It would be interesting to the search for improved lower bounds for $\chisub(G^d)$ when $G$ belongs to a minor closed class. The most immediate question (for which Theorem~\ref{thm:gcol_bounds} might be of help) is the following.

\begin{question}
    What is the maximum value of $\chisub(G^d)$ when $G$ has treewidth at most $t$?
\end{question}

It would also be interesting to find improved lower bounds for the subchromatic number of squares of planar graphs. Currently the best known lower bound is five \cite{clusteringpowers}, leaving a large gap from our upper bound of $43$.

\begin{ack}
We thank an anonymous referee for pointing out an error in an earlier version of  Theorem \ref{boundingsemiweak}.
\end{ack}

\bibliographystyle{plain}
\bibliography{bibliography}
\end{document}